\definecolor{hot}{RGB}{65,105,225}
\pgfplotsset{compat = newest}
\pgfplotsset{ticks=none}
\newcommand{%
    \fontsize{20pt}{20pt}\selectfont
    \def\svgheight{0.5cm}
    \input{.pdf_tex}
}[1]{%
    \fontsize{20pt}{20pt}\selectfont
    \def\svgheight{0.5cm}
    \input{#1.pdf_tex}
}
\def\bB{\mathbb{B}}
\def\bR{\mathbb{R}}
\def\bZ{\mathbb{Z}}
\def\Z{\mathbb{Z}}
\def\bQ{\mathbb{Q}}
\def\Q{\mathbb{Q}}
\def\bN{\mathbb{N}}
\def\R{\mathbb{R}}
\def\C{\mathbb{C}}
\def\F{\mathbb{F}}
\def\bC{\mathbb{C}}
\def\bP{\mathbb{P}}
\def\cC{\mathcal{C}}
\def\cO{\mathcal{O}}
\def\cQ{\mathcal{Q}}
\def\lam{\lambda}
\def\cL{\mathscr L}
\def\LL{\mathscr L}
\def\sX{\mathscr X}
\def\X{\mathscr X}
\def\ord{{\rm ord}}
\def\ol{\overline}
\def\be{\begin{equation}}
\def\ee{\end{equation}}
\def\al{\alpha}
\def\da{\dashrightarrow}
\def\xa{\xrightarrow}
\def\lct{{\rm lct}}
\def\codim{{\rm codim}}
\def\cJ{\mathcal{J}}
\def\eps{\epsilon}
\def\lQ{\mathop{\;\sim\;}_\bQ}
\def\noi{\noindent}
\def\sm{\smallskip}
\def\Ex{{\rm Ex}}
\DeclareMathOperator{\Spec}{Spec}
\DeclareMathOperator{\mult}{mult}
\DeclareMathOperator{\Hom}{Hom}
\DeclareMathOperator{\Div}{div}
\DeclareMathOperator{\Supp}{Supp}
\newtheorem{theorem}{Theorem}[section]
\newtheorem{thm}[theorem]{Theorem}
\newtheorem{cor}[theorem]{Corollary}
\newtheorem{lemma}[theorem]{Lemma}
\newtheorem{proposition}[theorem]{Proposition}
\newtheorem{prop}[theorem]{Proposition}
\newtheorem{corollary-definition}[theorem]{Corollary and Definition}
\theoremstyle{definition}
\newtheorem{defn}[theorem]{Definition}
\newtheorem{nota}[theorem]{Notation}
\newtheorem{rmk}[theorem]{Remark}
\long\def\comment#1{}
\begin{document}

\title{On the embedded Nash problem}

\author{Nero Budur}\address{Department of Mathematics, KU Leuven, Celestijnenlaan 200B, 3001 Leuven, Belgium, and, YMSC, Tsinghua University, 100084 Beijing, China, and BCAM, Mazarredo 14, 48009 Bilbao, Spain.}
\email{nero.budur@kuleuven.be}

\author{Javier de la Bodega}\address{Department of Mathematics, KU Leuven, Celestijnenlaan 200B, 3001 Leuven, Belgium, and BCAM, Mazarredo 14, 48009 Bilbao, Spain}
\email{javier.delabodega@kuleuven.be}

\author{Eduardo de Lorenzo Poza}\address{Department of Mathematics, KU Leuven, Celestijnenlaan 200B, 3001 Leuven, Belgium, and BCAM, Mazarredo 14, 48009 Bilbao, Spain}
\email{eduardo.delorenzopoza@kuleuven.be}

\author{Javier Fern\'andez de Bobadilla}\address{Ikerbasque, Basque Foundation for Science, Maria Diaz de Haro 3, 48013, Bilbao, Spain, and BCAM, Mazarredo 14, 48009 Bilbao, Spain, and Academic Collaborator at UPV/EHU.} \email{jbobadilla@bcamath.org}

\author{Tomasz Pe\l ka}\address{BCAM, Mazarredo 14, 48009 Bilbao, Spain}
\email{tpelka@bcamath.org}




\begin{abstract}
The embedded Nash problem for a hypersurface in a smooth algebraic variety is to characterize geometrically the maximal irreducible families of arcs with fixed order of contact along the hypersurface.
We show that divisors on minimal models of the pair contribute with such families. We solve the problem for unibranch plane curve germs, in terms of the resolution graph. These are embedded analogs of known results for the classical Nash problem on singular varieties. 
\end{abstract}

\maketitle

\section{Introduction}

We address the embedded Nash problem for  a hypersurface in a smooth algebraic variety. There is some analogy with the classical Nash problem which we recall now.

\subs {\bf The classical Nash problem.}  Let $V$ be a complex algebraic variety. To every maximal irreducible family of arcs through the singular locus of $V$ one can associate a divisorial valuation, called a {\it Nash valuation}. The classical Nash problem is to characterize  the Nash valuations in terms of resolutions of  singularities of $V$. An {\it essential valuation} is a divisorial valuation
whose center on every resolution of $V$ is an irreducible component of the
inverse image of the singular locus of $V$. A divisorial valuation over $V$ is called a {\it terminal valuation} if it is given by some  exceptional divisor on some minimal model over $V$ of a resolution of $V$. The name comes from the fact that minimal models have terminal singularities, a mild type of singularities. One has:

\begin{thm}\label{thmCl} Let $V$ be a complex algebraic variety. 

(i) (de Fernex-Docampo \cite{dFD}) There are  inclusions
$$
\{\text{terminal valuations}\}\subset\{\text{Nash valuations}\}\subset\{\text{essential valuations}\}.
$$

(ii) (Fern\'andez de Bobadilla-Pe Pereira \cite{FdBPP}) If $\dim V =2$ the three sets are equal.
\end{thm}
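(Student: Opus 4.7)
The plan is to split (i) into its two inclusions, which require different techniques, and then to combine (i) with a uniqueness feature of minimal resolutions of surfaces to reduce (ii) to the classical Nash conjecture for surfaces.

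For the easier inclusion Nash $\subset$ essential in (i), I would argue as follows. Given a maximal irreducible family $C \subset V_\infty$ with generic arc $\alpha$ meeting $\mathrm{Sing}(V)$, fix any resolution $\pi : Y \to V$. The arc $\alpha$ lifts uniquely to $Y_\infty$ by the valuative criterion of properness, and its lifted center on $Y$ is contained in $\pi^{-1}(\mathrm{Sing}\, V)$. Maximality of $C$ forces this center to be a whole irreducible component of $\pi^{-1}(\mathrm{Sing}\, V)$, otherwise a strictly larger family in $Y_\infty$ would push down to a strictly larger family in $V_\infty$. The associated valuation is therefore essential.

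For terminal $\subset$ Nash, I would follow de Fernex--Docampo. Given a minimal model $f : X \to V$ of some resolution and an $f$-exceptional prime divisor $E \subset X$, consider the maximal family $C_E \subset X_\infty$ of arcs whose generic point has center a general point of $E$ and order of contact $1$ along $E$. Its closure $\overline{f_\infty(C_E)} \subset V_\infty$ is irreducible and sits inside the arcs through $\mathrm{Sing}(V)$. The key is to show it is maximal: any strictly larger irreducible family $C'$ would, after lifting to $X$ and further resolving, force a divisor of negative log discrepancy over $X$, contradicting terminality of $X$. This uses the birational transformation rule and the change-of-variables formula for arc spaces to relate codimensions in $V_\infty$ and in $X_\infty$.

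For (ii), the crucial input is that a normal surface singularity admits a unique minimal resolution $\pi_0 : X_0 \to V$, through which every resolution factors, so the essential divisors are exactly its exceptional components; since $X_0$ is smooth with no $\pi_0$-contractible $(-1)$-curves, it is itself a minimal model over $V$, and essential $=$ terminal in dimension $2$. Combined with (i), the substantive content becomes the coincidence with the set of Nash valuations, which is the Nash conjecture for surfaces proved by Fern\'andez de Bobadilla--Pe Pereira. I would cite their argument, which proceeds by a wedge technique: assuming an exceptional curve $E$ of $\pi_0$ were not Nash, one constructs a wedge $\mathrm{Spec}\, K[[s,t]] \to V$ specializing a generic arc of the putative Nash family containing $E$ to an arc associated to an adjacent essential component, and derives a contradiction from a wedge lifting theorem that rests on the topology of the link. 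This last step is the main obstacle: while (i) reduces to log-discrepancy estimates and the formal machinery of motivic integration, the equality essential $=$ Nash for surfaces requires genuinely topological information on the link of the singularity, and the wedge lifting theorem is the heart of the matter.
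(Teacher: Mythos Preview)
This theorem is not proved in the paper at all: it is stated as background, with (i) attributed to de Fernex--Docampo and (ii) to Fern\'andez de Bobadilla--Pe Pereira. The only argument the paper gives is the one-line observation immediately following the statement: in dimension $2$, essential valuations coincide with terminal valuations because a minimal resolution exists, and hence (i) already implies (ii).

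Your sketch of (i) is a reasonable outline of the de Fernex--Docampo strategy and of the classical Nash-to-essential inclusion. The issue is in your treatment of (ii). You correctly argue that the minimal resolution $\pi_0\colon X_0\to V$ is a minimal model over $V$ (smooth, no $\pi_0$-exceptional $(-1)$-curves, hence $K_{X_0}$ is $\pi_0$-nef), so that essential $=$ terminal. But then you write that ``combined with (i), the substantive content becomes the coincidence with the set of Nash valuations'' and proceed to invoke the wedge technique of \cite{FdBPP}. This is redundant: once you have the chain terminal $\subset$ Nash $\subset$ essential from (i) together with terminal $=$ essential, the equality of all three sets is immediate and there is nothing left to prove. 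The paper makes exactly this point. Historically \cite{FdBPP} preceded \cite{dFD}, so the wedge argument was the original proof of (ii); but given (i), it is no longer needed, and your proposal does not reflect this logical shortcut.
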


\noi
See  \cite{BLM} for further results. If $\dim V=2$, essential valuations are terminal valuations, due to existence of minimal resolutions. Thus (i) implies (ii), which  solves the Nash problem in this case. 

\subs\label{submN} {\bf The embedded Nash problem.} Let $(X,D)$ be a pair consisting of a smooth complex algebraic variety $X$ and a non-zero effective divisor $D$. 
The interesting arcs on $X$  are now the ones with prescribed order of contact with $D$. Contact loci of arcs are fundamental for motivic integration and appear in the monodromy conjecture, see Denef-Loeser \cite{DL}. They also appear in a conjectural algebraic description of the Floer cohomology of iterates of the monodromy by Budur-Fern\'andez de Bobadilla-L\^e-Nguyen \cite{Fl}. 

Every maximal irreducible family of arcs in $X$ with fixed positive order of contact with $D$ is obtained from some prime divisor on a log resolution of $(X,D)$, by Ein-Lazarsfeld-Musta\c{t}\u{a} \cite{ELM}, and thus one can associate to it a divisorial valuation, which we call a {\it contact valuation}. The embedded Nash problem, posed in \cite[\S 2]{ELM}, is the geometric characterization of the contact valuations in terms of log resolutions  of $(X,D)$.

There is a priori no reason why the Nash problem for a pair should be related with the classical Nash problem. Indeed, the arc space of $D$ is complementary in the arc space of $X$ to the set of arcs with finite order of contact with $D$. Nevertheless, we show that Theorem \ref{thmCl} has an analog for pairs. We give now a gentle introduction to the results. Precise statements are given in \ref{subsMrd}.

We introduce {\it essential valuations} for a pair $(X,D)$ in terms of log resolutions satisfying a separating property which has appeared in work of Nicaise-Sebag \cite{NS1, NS2}, McLean \cite{Mc}, and in \cite{Fl}.  If $D$ is reduced, we introduce {\it divisorial log terminal valuations} (or {\it dlt valuations}) as those given by prime divisors lying over $D$ in dlt modifications of $(X,D)$.  Dlt modifications are minimal models over $X$ obtained from log resolutions of $(X,D)$ and have dlt singularities, a mild type of singularities. Their existence is proved by Odaka-Xu \cite{OX}.   

Our first main result is that, for $X$ smooth and $D$ a reduced effective divisor, there are inclusions
$$
\{\text{dlt valuations}\}\subset\{\text{contact valuations}\}\subset\{\text{essential valuations}\}.
$$
 The  proof is based on a variation of an argument from \cite{Fl}, where  positivity of a relative ample divisor on a log resolution is exchanged with that of the log canonical divisor on a dlt modification. Positivity is used to estimate intersection numbers with the arcs in a 1-parameter family of arcs, a theme  common to  \cite{FdBPP, dFD, Fl}.

Among the dlt valuations are the  
 {\it jump valuations}, that is, valuations defined by prime divisors computing jumping numbers of $(X,D)$. The {\it top contact valuations}, that is, those contact valuations which correspond to minimal-codimension irreducible components of loci of arcs with fixed contact order, can  be determined numerically from log resolutions.

For an irreducible formal plane curve germ, the three sets of valuations are not equal, unlike in the dimension-two case of the classical Nash problem.  Our second main result is the characterization in terms of the resolution graph of the set of contact valuations. This solves the embedded Nash problem in this case. The main idea, besides  minimal separating log resolutions,  is to use the topology of the Milnor fibration in terms of Dehn twists to rule out adjacencies between strata of contact loci. It turns out that the irreducible components of contact loci in this case are  disjoint.

\subs{\bf Previous work.} The  embedded Nash problem is solved for an affine toric variety given by a cone together with a toric invariant ideal by Ishii \cite{Is-t},   and for  hyperplane arrangements by Budur-Tue \cite{BT}, see Section \ref{secHA}. Closely related to the embedded Nash problem is the {\em generalized Nash problem}, introduced in~\cite{Is}, which consists in studying adjacencies between maximal divisorial sets; the case of maximal divisorial sets in the plane was treated in~\cite{FPPP}.

A related problem is the characterization of irreducible components of jet schemes of a singular variety. Known results in this direction provide numerical characterizations in  cases with combinatorial flavor. This gives sometimes that all contact valuations are top, see Section \ref{secOth}, e.g.  hypersurfaces with  maximal-multiplicity rational singularities by Mourtada \cite{Mou-rat}, Bruschek-Mourtada-Schepers \cite{BMS},  and determinants of generic square matrices by Docampo \cite{Roi}. In general, the passage from a numerical characterization of the configuration of irreducible components of the jet schemes of $D$ to a geometric characterization (that is, in terms of valuations) of the irreducible components of the contact loci of $(X,D)$ is more difficult, and only partial solutions to the embedded Nash problem are obtained, see \cite{Mou-rat, Mou-tor, Mou-Pl,  KMPT, CoM, Kor}.

A numerical algorithm to determine the configuration of irreducible components of the jet schemes of a unibranch curve singularity was given by Mourtada \cite{Mou}. This gave information in terms of the resolution graph only for certain divisors in Lejeune-Jalabert-Mourtada-Reguera \cite[Thm. 2.7]{LJMR}. Our solution of the embedded Nash problem in this case does not use \cite{Mou}.


\subs {\bf The results in detail.} \label{subsMrd}
Let $\cL (X)=\Hom_{\bC\rm{-sch}}(\Spec\bC\llbracket t \rrbracket, X)$ be the arc space of $X$. Let $\Sigma\subset D_{red}$ be a non-empty Zariski closed subset. Let $m\ge 1$ be an integer.

\begin{defn}\label{defsX}   The {\it $m$-contact locus of $(X,D,\Sigma)$} is
$$
\sX_m (X,D,\Sigma) :=\{\gamma\in \cL (X)\mid \gamma (0) \in \Sigma\text{ and } \ord_\gamma D = m\}.
$$
For simplicity we use the notation $\sX_m $ when the context is clear. We set $\sX_m (X,D):=\sX_m (X,D,D_{red})$.
\end{defn}

\begin{defn}\label{eqSm}
 Let $\mu:Y\to X$ be a 
{\it log resolution} of $(X,D,\Sigma)$, that is, $\mu$ is a projective birational morphism from a smooth variety $Y$ such that $\mu^{-1}(D)$ and $\mu^{-1}(\Sigma)$ are simple normal crossings divisors. Write the decomposition into irreducible components 
$$\Supp(\mu^{-1}(D))=\cup_{i\in S}E_i$$
and let
$$
N_i:=\ord_{E_i} D,\quad \nu_i:=1+\ord_{E_i}(K_{Y/X}).
$$
We set
$$
S_m:=\{i\in S\mid  \mu(E_i)\subset \Sigma \text{ and }N_i\text{ divides }m \}.
$$
For every $i\in S$, we set
$$
\sX_{m,i}  :=\{\gamma\in \sX _m\mid \gamma \text{ lifts to an arc on }Y\text{ centered on }E_i^\circ\}
$$
where $E_i^\circ$ is the complement of $\cup_{j\in S\setminus\{i\}}E_j$ in $E_i$. Note that the lift of the arc is unique since $m$ is finite, by the valuative criterion of properness.
\end{defn}

\begin{defn}
A log resolution $\mu:Y\to X$ of $(X,D,\Sigma)$ is called {\it $m$-separating}  if in addition $\mu$ is an isomorphism over $X\setminus D$, and for any $i\neq j\in S$  with $E_i\cap E_j\neq\emptyset$, one has $N_i+N_j>m$. 
\end{defn}
This notion  appeared with a different name in \cite[Lemma 5.17]{NS1}, \cite{NS2}. The name was coined in \cite{Mc} and used in \cite{Fl}. Starting with a log resolution, one can blowup further smooth centers until an $m$-separating one is obtained, see \cite[Lemma 2.9]{Fl}.
If $\mu$ is an $m$-separating log resolution of $(X,D,\Sigma)$, by \cite{ELM} there is a partition into non-empty smooth irreducible  locally closed subvarieties, see also Proposition \ref{propCli}:
\be\label{eqD}
\sX _m=\bigsqcup_{i\in S_m}\sX_{m,i} 
\ee

\begin{defn} 
An {\it $m$-valuation of $(X,D,\Sigma)$} is a divisorial valuation $v$ on $X$ given by $E_i$, that is, $v=\ord_{E_i}$, for some $i\in S_m$ with respect to some  log resolution (equivalently, some $m$-separating log resolution) $\mu$ of $(X,D,\Sigma)$. 
\end{defn}

Thus every irreducible component of $\sX _m$ arises as the Zariski closure of some
$\sX_{m,i} $ with respect to some $m$-separating log resolution, and hence it corresponds to a unique $m$-valuation of $(X,D,\Sigma)$. 

\begin{defn} A {\it contact $m$-valuation of $(X,D,\Sigma)$} is an $m$-valuation corresponding to an irreducible component  $\ol{\sX_{m,i} }$ of the $m$-contact locus $\sX _m$ of $(X,D,\Sigma)$. A {\it contact valuation of $(X,D,\Sigma)$} is  a contact $m$-valuation for some $m\ge 1$. A contact valuation can be an $m'$-valuation without being a contact $m'$-valuation.
\end{defn}

\begin{defn} An
{\it essential $m$-valuation of $(X,D,\Sigma)$} is an $m$-valuation with center a prime divisor lying over $\Sigma$ on every $m$-separating log resolution of $(X,D,\Sigma)$. An {\it essential valuation of $(X,D,\Sigma)$} is an essential $m$-valuation for some $m\ge 1$. An essential valuation can be an $m'$-valuation without being an essential $m'$-valuation. Contact $m$-valuations are essential $m$-valuations  by (\ref{eqD}). 
\end{defn}


\begin{defn} 
Assume $D$ is reduced. A {\it divisorial log terminal valuation} (or {\it dlt valuation}) of $(X,D,\Sigma)$ is a valuation of $X$ given by a prime divisor lying over $\Sigma$ in a dlt modification of $(X,D)$ (see Definition \ref{defMM}). A {\it dlt $m$-valuation of $(X,D,\Sigma)$} is a dlt valuation that is also an $m$-valuation.
\end{defn}



\begin{thm}\label{thmM} Let $X$ be a smooth complex algebraic variety, $D$ a non-zero reduced effective divisor on $X$, and $\Sigma$ a Zariski closed subset of $D$. Let $m\ge 1$ be an integer. Then for $(X,D,\Sigma)$,
\be\label{eqInc}
\{\text{dlt }m{-valuations }\}\subset\{\text{contact }m{-valuations }\}\subset\{\text{essential }m{-valuations}\}.
\ee
\end{thm}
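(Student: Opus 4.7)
The inclusion contact $\subseteq$ essential is essentially formal from the partition (\ref{eqD}). Let $v$ be a contact $m$-valuation realized by $E_i$ on some $m$-separating log resolution $\mu\colon Y\to X$. For any other $m$-separating log resolution $\mu'\colon Y'\to X$, the generic arc of $\sX_{m,i}$ lifts by the valuative criterion to a unique arc of $Y'$, which by (\ref{eqD}) applied to $\mu'$ is centered on the open stratum of some component $E_j'$ with $j$ in the corresponding index set for $\mu'$. Irreducibility of $\sX_{m,i}$ (from the ELM affine-bundle structure over $E_i^\circ$) then forces $\sX_{m,i}\subseteq \sX_{m,j}'$, and maximality of $\overline{\sX_{m,i}}$ as a component of $\sX_m$ gives equality of closures, hence $\ord_{E_j'}=v$. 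The center of $E_j'$ on $X$ coincides with that of $E_i$, which lies in $\Sigma$, so $v$ is essential.

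For the inclusion dlt $\subseteq$ contact, let $v=\ord_E$ for $E$ a prime divisor on a dlt modification $\pi\colon (Y,D_Y)\to (X,D)$ lying over $\Sigma$, and assume $v$ is an $m$-valuation. Pick an $m$-separating log resolution $\tilde\mu\colon \tilde Y\to X$ factoring through $\pi$, and let $i\in S_m$ be the index of the strict transform of $E$ on $\tilde Y$; the task is to show that $\overline{\sX_{m,i}}$ is a full irreducible component of $\sX_m$. Suppose for contradiction $\overline{\sX_{m,i}}\subsetneq \overline{\sX_{m,j}}$ for some $j\in S_m\setminus\{i\}$. By a curve-selection argument as in \cite{FdBPP,dFD,Fl}, produce a wedge $\alpha\colon \Spec\bC\llbracket s,t\rrbracket\to X$ whose generic $t$-arc (at $s\neq 0$) lies in $\sX_{m,j}$ and whose special $t$-arc (at $s=0$) lies in $\sX_{m,i}$.

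Lift $\alpha$ through $\pi$ to $\tilde\alpha\colon \Spec\bC\llbracket s,t\rrbracket\to Y$: the generic $t$-arc lifts by properness of $\pi$ combined with the finite contact of $\alpha$ with $D$, and the special $t$-arc lifts via $\tilde Y$ with center on $E$. The generic $t$-arc of $\tilde\alpha$ is centered on a prime divisor $F\subset D_Y$ corresponding to $E_j$ on $\tilde Y$, with $F\neq E$. The key step is now to intersect the image of $\tilde\alpha$ against the $\pi$-nef divisor $K_Y+D_Y$: because $E$ appears in $D_Y$ with coefficient $1$ and the dlt condition controls log discrepancies along $D_Y$, the intersection numbers computed from the generic and from the special $t$-arcs are constrained in such a way that $F$ cannot be distinct from $E$, which is the desired contradiction. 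This is the promised exchange of the positivity of a relative ample divisor on a log resolution, used in \cite{Fl}, for the positivity of $K_Y+D_Y$ on the dlt modification.

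The main obstacle is executing the intersection-theoretic estimate on $Y$, whose dlt singularities are $\bQ$-factorial but not smooth, so one must pull back numerical data to a resolution of the (possibly singular) wedge image and carefully track exceptional contributions to $K_{\tilde Y/Y}$ when comparing intersection numbers upstairs with those on $Y$. A secondary technicality is arranging the wedge so that its total space is not contained in $D$, which is needed so that the contact orders along $D$ and along $D_Y$ are detected by intersection against the respective divisors; this is standard from the ELM structure of $\sX_m$ but has to be set up at the outset.
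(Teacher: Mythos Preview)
Your architecture is the paper's: curve selection to get a wedge, then exploit $\mu'$-nefness of $K_{Y'}+\Delta'$ on the dlt side to contradict the strict codimension drop. But two steps in your sketch do not go through as written.

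\textbf{The wedge does not lift.} You assert a morphism $\tilde\alpha\colon \Spec\bC\llbracket s,t\rrbracket\to Y$. Lifting each arc $\alpha_s$ separately by the valuative criterion does not produce a morphism from the two-parameter base: precisely because the center jumps from $E$ at $s=0$ to something over $E_j$ for $s\neq 0$, the induced rational map $\bC^2\dashrightarrow Y$ has indeterminacy at the origin. The paper fixes this by blowing up $\bC^2$ to a surface $Z$ that resolves the indeterminacy of the maps to both the log resolution and the dlt model simultaneously, and then computes intersection numbers on $Z$ using rational equivalence of the pulled-back lines $\sigma^*L_s\sim\sigma^*L_0$.

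\textbf{The center $F$ on the dlt model need not be a divisor.} If $E_j$ is contracted on the dlt modification, your $F$ has codimension $\ge 2$ and the sentence ``the generic $t$-arc of $\tilde\alpha$ is centered on a prime divisor $F\subset D_Y$'' fails. The paper sidesteps this by never intersecting on the dlt model directly for $s\neq 0$: it uses the divisor $W=K_{Y'}+\Delta'-\mu'^*(K_X+D)$ on the dlt model, but computes $\tilde L_s\cdot\beta'^*W$ via the pullback $\phi^*W$ to the log resolution, where $E_j$ is always a divisor and its coefficient is $a(E_j,X,D)-a(E_j,Y',\Delta')\le a(E_j,X,D)$. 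On the $s=0$ side, nefness of $K_{Y'}+\Delta'$ makes the contribution of the $\sigma$-exceptional curves nonnegative, yielding $\sigma^*L_0\cdot\beta'^*W\ge a(E_i,X,D)\,m/N_i$. Equating the two sides gives $a(E_j,X,D)\,m/N_j\ge a(E_i,X,D)\,m/N_i$, which contradicts the codimension inequality $m\nu_i/N_i>m\nu_j/N_j$ coming from $\overline{\sX_{m,i}}\subsetneq\overline{\sX_{m,j}}$. You should set up this numerical target at the outset rather than aiming for ``$F$ cannot be distinct from $E$''.

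Your closing paragraph correctly flags that the estimate is the crux, but the suggestion to track $K_{\tilde Y/Y}$ is not what is needed; the relevant object is $\phi^*W$ and the discrepancy bookkeeping between the log resolution and its minimal model.
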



For hyperplane arrangements, all three sets in (\ref{eqInc}) are equal if the log resolution satisfies an additional property, see Section \ref{secHA}.

\begin{thm}\label{corArr}
If $D$ is a hyperplane arrangement in $X=\bC^n$, and $m\in\bZ_{>0}$, then for $(X,D)$ the three sets of $m$-valuations in (\ref{eqInc}) are equal, and consist of the divisorial valuations corresponding to $E_i$ with $i\in S_m$ as in Definition \ref{eqSm} for any good $m$-separating log resolution of $(X,D)$.
\end{thm}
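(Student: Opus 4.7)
By Theorem \ref{thmM} we have
$$\{\text{dlt } m\text{-valuations}\} \subseteq \{\text{contact } m\text{-valuations}\} \subseteq \{\text{essential } m\text{-valuations}\}$$
for $(X, D)$. By definition any essential $m$-valuation has center a prime divisor on every $m$-separating log resolution; on the given good log resolution $\mu \colon Y \to X$ this forces it to coincide with some $E_i$ for $i \in S_m$. Hence all three sets sit inside $\{E_i : i \in S_m\}$, and the theorem reduces to proving the reverse inclusion: every $E_i$ with $i \in S_m$ defines a dlt $m$-valuation.

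The plan is to show that $(Y, \Delta)$, with $\Delta = \mu^*(D)_{red}$, is itself a (good) minimal model of $(Y, \Delta)$ over $X$. Granted this, $(Y, \Delta)$ qualifies as a dlt modification of $(X, D_{red})$: it is snc (hence dlt) and $\Delta = \mu_*^{-1}D + \Ex(\mu)$ since $D$ is reduced. By Proposition \ref{propBE}(c) no prime component $E_i$ of $\Delta$ lies in the $\mu$-stable base locus $\bB(K_Y + \Delta / X)$, so each $E_i$ gives a dlt valuation, and restricting to $i \in S_m$ yields dlt $m$-valuations. The key technical step is to verify the $\mu$-nefness of $K_Y + \Delta$ (equivalent to $\mu$-semiampleness, by Theorem \ref{thmEMM} and Proposition \ref{propBE}). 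For the canonical log resolution the identity
$$
K_Y + \Delta = \mu^*(K_X + D_{red}) + \sum_F (c_F - m_F) E_F,
$$
where $F$ ranges over the edges of the arrangement of codimension $c_F$ and multiplicity $m_F$ (number of hyperplanes containing $F$), with $c_F \le m_F$, reduces $\mu$-nefness to the inequalities $\sum_F (m_F - c_F) E_F \cdot C \le 0$ for every $\mu$-exceptional curve $C$. These are checked by a toric-local analysis at each stratum of the arrangement, the negative coefficient $m_F - c_F$ being compensated by the negative self-intersection of $E_F$ along curves inside it.

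For further good blowups $\pi \colon Y_1 \to Y$ the transformation law
$$
K_{Y_1} + \Delta_1 = \pi^*(K_Y + \Delta) + (c - k) F,
$$
where $F$ is the new exceptional, $c = 2$ its codimension, and $k \ge 2$ the number of components of $\Delta$ through the blowup center, records how $K + \Delta$ evolves, the correction being non-positive. The main obstacle is a uniform argument that $\mu$-nefness persists through such iterated good blowups: the negativity of $(c - k) F$ on $\mu_1$-exceptional curves transversal to $F$ must be offset by the positive contribution on curves contained in $F$. An alternative route avoids the nefness computation altogether by invoking the explicit description of contact loci of hyperplane arrangements in Budur-Tue \cite{BT}, which identifies the contact $m$-valuations with the $E_i$ for $i \in S_m$ on any good $m$-separating log resolution; combined with Theorem \ref{thmM} this yields the three equalities at once.
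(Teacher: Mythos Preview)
Your overall strategy matches the paper's: show that $(Y,\Delta)$ is its own minimal model over $X$, so that every $E_i$ with $i\in S_m$ is a dlt $m$-valuation, and combine this with the description of contact loci from \cite{BT}. However, there are two genuine gaps.

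First, your direct verification of $\mu$-nefness is incomplete, as you yourself acknowledge. The paper does not attempt this computation. Instead, Proposition~\ref{propDltArr} passes to the compactification $(\bar X,\bar D+H)=(\bP^n,\bar D+H)$ and invokes Tevelev's results \cite[Theorems~1.4,~1.5]{Te}: a good log resolution $\bar Y$ embeds as the closure of $X\setminus D$ in a smooth toric variety proper over $\bP^r$, and $K_{\bar Y}+\bar\Delta$ is the pullback of a very ample divisor under a proper morphism, hence nef. This global toric description bypasses the inductive intersection-number bookkeeping you are attempting.

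Second, your ``alternative route'' is not a route at all: \cite{BT} (Proposition~\ref{propTue}) shows that the decomposition $\sX_m=\bigsqcup_{i\in S_m}\sX_{m,i}$ is into connected components, whence every $E_i$ with $i\in S_m$ gives a contact $m$-valuation, and so $\{\text{contact }m\text{-valuations}\}=\{\text{essential }m\text{-valuations}\}=\{E_i:i\in S_m\}$. But combined with Theorem~\ref{thmM} this only yields $\{\text{dlt }m\text{-valuations}\}\subseteq\{E_i:i\in S_m\}$, not equality. To conclude that every $E_i$ is a dlt $m$-valuation you still need $(Y,\Delta)$ to be its own minimal model---the very statement you were trying to avoid. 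The paper uses both ingredients: \cite{BT} for contact $=$ essential, and Tevelev for the dlt part.
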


 Next we show that some interesting valuations lie in the sets from (\ref{eqInc}).  
 
 \begin{defn}
 A {\it jump valuation of $(X,D,\Sigma)$} is a valuation given by a prime divisor lying over $\Sigma$ in a log resolution of $(X,D,\Sigma)$  that contributes with a jumping number to $(X,D)$, see Definition \ref{defMJ}.  
 \end{defn}
 
 Adapting the proof of a result of Smith-Tucker \cite[Appendix]{BV} we show:
  
\begin{thm}\label{thmJV} Let $(X,D,\Sigma)$ be as in Theorem \ref{thmM}. If $\mu$ is a log resolution of $(X,D,\Sigma)$ and $E_i$ with $i\in S$ defines a jump valuation of $(X,D,\Sigma)$, then $E_i$ defines a dlt valuation, and hence a contact $m$-valuation, of $(X,D,\Sigma)$ for every $m$ divisible by $N_i$.
\end{thm}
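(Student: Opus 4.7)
\emph{Proof plan.}
The "hence" clause is immediate from Theorem \ref{thmM}: once $E_i$ is known to be a dlt valuation and $N_i \mid m$, then $i \in S_m$, and Theorem \ref{thmM} yields that $E_i$ is a contact $m$-valuation. The substance is therefore to show that a jump valuation is a dlt valuation. The strategy is to prove that $E_i$ has non-positive log discrepancy with respect to $(X,D)$, so it is a non-klt place, and then to extract it on a dlt modification.

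First I would reduce to the case $\lambda \in (0,1]$. The projection formula applied to a log resolution gives $\cJ(X,(c+1)D) = \cJ(X,cD) \otimes \cO_X(-D)$, so jumping numbers of $(X,D)$ are $\bZ$-periodic. Moreover, if $g$ locally witnesses the jump at $\lambda > 1$ via $E_i$, then writing $g = f_D h$ with $f_D$ a local equation of $D$, the element $h$ witnesses the jump at $\lambda - 1$ via the same $E_i$, since $\ord_{E_j}(\mu^*h) = \ord_{E_j}(\mu^*g) - N_j$ for every $j$. Iterating, we may assume $\lambda \in (0,1]$.

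Next I would extract the log discrepancy inequality directly from the definition of contribution. By the contribution hypothesis $\cJ(X,\lambda D) \subsetneq \mu_*\cO_Y(K_{Y/X} - \lfloor \lambda \mu^*D\rfloor + E_i)$, so one can pick $g$ in the difference. Since the two ideals differ only in the coefficient at $E_i$ (meaningful only when $\lambda N_i \in \bZ$, which is automatic when $E_i$ contributes), comparing the two defining conditions forces $\ord_{E_i}(\mu^*g) = \lambda N_i - \nu_i$. Regularity of $\mu^*g$ implies $\ord_{E_i}(\mu^*g) \geq 0$, so $\lambda N_i \geq \nu_i$. Combined with $\lambda \leq 1$, this yields $\nu_i \leq N_i$, equivalently the log discrepancy $\nu_i - N_i$ of $E_i$ with respect to $(X,D)$ is non-positive, so $E_i$ is a non-klt place.

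Finally I would extract $E_i$ on a dlt modification. Starting from $\mu: Y \to X$, one runs the $(K_Y + \mu^{-1}_*D + \Ex(\mu))$-MMP relative to $X$; by Odaka-Xu this terminates at a dlt modification $\pi: Z \to X$ of $(X,D)$. The delicate step, adapted from Smith-Tucker \cite[Appendix]{BV}, is to ensure that $E_i$ is not contracted during the MMP. Smith-Tucker's original argument shows that a contributing jump divisor carries a specific non-vanishing section of an adjoint line bundle restricted to $E_i$; adapting this, one shows that the associated positivity prevents any extremal contraction of $E_i$ in an MMP-with-scaling. Once $E_i$ survives to $Z$, the hypothesis $\mu(E_i) \subset \Sigma$ implies $\pi(E_i) \subset \Sigma$, so $E_i$ is a dlt valuation of $(X,D,\Sigma)$. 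The main obstacle is precisely this controlled MMP step: the log discrepancy bound alone only guarantees $E_i$ is extractable in principle, and producing a dlt modification containing this specific $E_i$ is where the Smith-Tucker analysis is essential.
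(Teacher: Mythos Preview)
Your overall architecture matches the paper's: the point is to run the MMP from $(Y,\Delta=\mu_*^{-1}D+\Ex(\mu))$ over $X$ and show $E_i$ is never contracted, so that it survives to a dlt modification. You also correctly identify that the mechanism is an adaptation of the Smith--Tucker argument. That is exactly what the paper does.

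However, your Steps 1--2 are a detour. The reduction to $\lambda\in(0,1]$ and the resulting bound $\nu_i\le N_i$ are correct, but the paper neither proves nor uses them; as you yourself acknowledge, knowing that $E_i$ is a non-klt place does not by itself produce a dlt modification on which $E_i$ sits as a divisor. So these steps add nothing toward the goal.

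The entire content is your Step 3, which you have only named. What the paper actually does there is substantial and should be spelled out. One argues by contradiction: let $(Y',\Delta')$ be the last MMP step before $E_i$ contracts, so $E_i'=\phi_*E_i$ is covered by curves $C$ with $(K_{Y'}+\Delta')\cdot C<0$. Then one shows, via the Smith--Tucker computation, that multiplier ideals are already computed on $Y'$, that the contribution property of $E_i$ descends to $E_i'$ on $Y'$, and that local vanishing $R^1\mu'_*\cO_{Y'}(K_{Y'/X}-\lfloor\lambda\mu'^*D\rfloor)=0$ holds. These three inputs produce a rational function $g$ with $P=\Div(g)+K_{Y'/X}-\lfloor\lambda\mu'^*D\rfloor+E_i'$ effective and having $E_i'$-coefficient zero. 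Writing $K_{Y'}+\Delta'\sim_{\bQ,X} P+R$ with $R=\Delta'-E_i'-\{\lambda\mu'^*D\}$ effective and not containing $E_i'$, one restricts to $E_i'$ and gets $C\cdot(P+R)|_{E_i'}\ge 0$, contradicting $(K_{Y'}+\Delta')\cdot C<0$. Your sketch ``the associated positivity prevents any extremal contraction'' hides all of this; in particular, the descent of contribution and local vanishing to the singular intermediate model $Y'$ is where the real work lies.
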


The converse does not always hold, see Baumers-Veys \cite[\S 7]{BV}. 

\begin{defn}
The {\it top contact $m$-valuations} of $(X,D,\Sigma)$ are the contact $m$-valuations corresponding to the minimal-codimension irreducible components of $\sX _m$. 
\end{defn}

\begin{defn}\label{defmlct}
If $D$ is  an effective divisor, and $\mu$ is an $m$-separating log resolution of $(X,D, \Sigma)$,  the {\it $m$-log canonical threshold} of $(X,D,\Sigma)$ is
$$
\lct_m(X,D,\Sigma) :=\min\left\{{\nu_i}/{N_i}\mid i\in S_m\right\}.
$$
\end{defn}
The right-hand side depends on the log resolution if the $m$-separating condition is dropped.  Nevertheless, $\lct_m(X,D,\Sigma)$ is independent of $\mu$ by the following, essentially  contained in \cite{M}, \cite{ELM}, where it was stated for the usual log canonical thresholds:

\begin{prop}\label{corLct} The codimension of $\sX _m$ is
$m \, \lct_m(X,D,\Sigma)$. The set of top contact $m$-valuations is the set of $m$-valuations (necessarily essential $m$-valuations) $E_i$ such that  $\lct_m(X,D,\Sigma)=\nu_i/N_i$.
\end{prop}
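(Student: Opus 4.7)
The plan is to fix any $m$-separating log resolution $\mu:Y\to X$ of $(X,D,\Sigma)$ and use the disjoint decomposition (\ref{eqD}) to reduce the codimension computation of $\sX_m$ to a codimension computation on each stratum $\sX_{m,i}$ for $i\in S_m$. The key input is the Ein--Lazarsfeld--Musta\c{t}\u{a} / Kontsevich change of variables formula on the log resolution: for $i\in S_m$, the stratum $\sX_{m,i}$ is the image under $\mu_*$ of the cylinder
\[
C_i=\{\tilde\gamma\in \cL(Y)\mid \tilde\gamma(0)\in E_i^\circ,\ \ord_{\tilde\gamma}(E_i)=m/N_i\},
\]
noting that the $m$-separating property forces $\tilde\gamma$ to meet only $E_i$ (any other $E_j$ meeting $E_i$ would contribute $N_j\ge m-N_i+1$ to $\ord_\gamma D$, exceeding $m$). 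Since $E_i\subset Y$ is smooth of codimension one and $\mu$ is an isomorphism over the complement of $D$, we get $\codim_{\cL(Y)}(C_i)=m/N_i$, and the Jacobian correction along $K_{Y/X}=\sum_{j\in S}(\nu_j-1)E_j$ contributes $(\nu_i-1)m/N_i$, so
\[
\codim_{\cL(X)}\sX_{m,i}=\frac{m}{N_i}+(\nu_i-1)\frac{m}{N_i}=m\,\frac{\nu_i}{N_i}.
\]

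Taking the minimum over $i\in S_m$ in the decomposition (\ref{eqD}) gives
\[
\codim_{\cL(X)}\sX_m=\min_{i\in S_m}m\,\frac{\nu_i}{N_i}=m\,\lct_m(X,D,\Sigma),
\]
which proves the first assertion and simultaneously shows that $\lct_m(X,D,\Sigma)$ is independent of the chosen $m$-separating log resolution.

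For the second assertion, recall that by Proposition \ref{propCli} each $\sX_{m,i}$ is irreducible and the union (\ref{eqD}) is disjoint. An irreducible locally closed piece $\sX_{m,i}$ of minimal codimension $m\,\lct_m$ cannot be contained in the closure of any $\sX_{m,j}$ with $j\neq i$: indeed, this would force $\codim\sX_{m,j}\le\codim\sX_{m,i}$, hence equality of codimensions and then equality of closures by irreducibility, contradicting the disjointness of the two strata. Therefore $\ol{\sX_{m,i}}$ is an irreducible component of $\sX _m$ precisely when $\nu_i/N_i$ attains the minimum $\lct_m(X,D,\Sigma)$, and such $E_i$ are automatically essential $m$-valuations by the inclusion in (\ref{eqInc}) from Theorem \ref{thmM}.

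\textbf{Main obstacle.} The only non-formal step is the codimension calculation $\codim_{\cL(X)}\sX_{m,i}=m\nu_i/N_i$. This is essentially the content of \cite{M, ELM}, but one must be careful that the $m$-separating condition is what ensures each lifted arc meets exactly one divisor $E_i$, which makes the Jacobian correction along $K_{Y/X}$ reduce to the clean contribution $(\nu_i-1)m/N_i$ and thus makes the resulting codimension formula independent of $\mu$; without this condition the quantity $\min\{\nu_i/N_i : i\in S_m\}$ genuinely depends on the resolution.
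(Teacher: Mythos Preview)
Your proof is correct and follows essentially the same route as the paper. The paper's own proof is a one-liner: ``It follows directly from the previous proposition,'' where the previous proposition is Proposition~\ref{propCli}, which already records that each stratum $\sX_{m,i}$ is an irreducible cylinder of codimension $m\nu_i/N_i$. You have unpacked this codimension computation via the change-of-variables formula, which is exactly what underlies Proposition~\ref{propCli}.

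Two small remarks. First, for the second assertion your argument via ``equality of closures then contradiction with disjointness'' works, but it is slightly quicker to note that the irreducible components of $\sX_m$ are exactly the maximal elements among the $\ol{\sX_{m,i}}$, and strict containment $\ol{\sX_{m,i}}\subsetneq\ol{\sX_{m,j}}$ of irreducible cylinders forces strict inequality of codimensions (by passing to a jet space $\cL_l(X)$ where both are pulled back from finite-dimensional irreducible closed sets); minimality of $\codim\sX_{m,i}$ then directly prevents this. Second, your appeal to Theorem~\ref{thmM} for the parenthetical ``(necessarily essential $m$-valuations)'' is slightly off: Theorem~\ref{thmM} assumes $D$ reduced, whereas Proposition~\ref{corLct} does not. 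The fact that contact $m$-valuations are essential $m$-valuations is more elementary and is noted immediately after Definition~1.10, as a direct consequence of the decomposition~(\ref{eqD}).
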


\begin{rmk} (i) Despite the similar appearance, $\lct_m(X,D,\Sigma)$ is not necessarily equal to the {\it usual log canonical threshold  of $(X,D)$ in a neighborhood of $\Sigma$}, 
$$\lct_\Sigma(X,D):=\min\left\{{\nu_i}/{N_i}\mid i\in S\text{ and } \mu(E_i)\cap \Sigma\neq\emptyset \right\}.
$$
In general, $\lct_\Sigma(X,D)\le \lct_m(X,D,\Sigma)$ and the inequality can be strict.

 (ii) Every prime divisor $E_i$ lying over $\Sigma$ in some log resolution of $(X,D,\Sigma)$ such that $E_i$ computes $\lct_\Sigma(X,D)$, gives a top contact $m$-valuation of $(X,D,\Sigma)$ for all $m$ divisible by $N_i$. 
 
 (iii) The  numbers $\lct_m(X,D,\Sigma)$ and the number of top contact $m$-valuations of $(X,D,\Sigma)$ for all $m\ge 1$, are read from a single log resolution $\mu$ of $(X,D,\Sigma)$ by applying the virtual Poincar\'e polynomial realization to the expression in terms of $\mu$ of the motivic zeta function of $(X,D,\Sigma)$, cf. \cite[Ch. 2, Corollary 3.5.12.]{ACL}.
\end{rmk}

Next we look at  cases when $D$ is has mild singularities.

\begin{thm}\label{thmLcv}
Suppose $D$ is reduced and $(X,D)$ is log canonical. For $(X,D,\Sigma)$:

(i) If $\lct_m(X,D,\Sigma)=1$, then
$
\{\text{dlt } m\text{-valuations}\} =\{\text{top contact } m\text{-valuations}\}.
$

(ii) If $\lct_m(X,D,\Sigma)\neq 1$, then $
\{\text{dlt } m\text{-valuations}\} =\emptyset.
$

(iii) If $\Sigma=D$, then for every $m\ge 1$
$$
\{\text{dlt } m\text{-valuations}\} = \{\text{contact } m\text{-valuations}\} = \{\text{top contact } m\text{-valuations}\}.
$$

(iv) The last three sets are singletons, consisting of the valuation defined by $D$, if in addition $D$ has rational singularities. 

\end{thm}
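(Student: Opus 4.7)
The plan is to base all four parts on one common observation: since $(X,D)$ is log canonical and $D$ is reduced, every prime divisor $E_i$ in a log resolution with $\mu(E_i)\subset D$ satisfies $\nu_i\geq N_i$, with equality iff $E_i$ is a log canonical place of $(X,D)$. Applying Odaka--Xu \cite{OX}, the log canonical places whose center lies over $\Sigma$ are exactly the prime divisors lying over $\Sigma$ in some dlt modification of $(X,D)$, so the dlt $m$-valuations of $(X,D,\Sigma)$ will be identified with those $i\in S_m$ satisfying $\nu_i=N_i$.

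For (i) and (ii), I would combine the above with Proposition \ref{corLct}, which identifies the top contact $m$-valuations with the $i\in S_m$ achieving $\nu_i/N_i=\lct_m$. Under $\lct_m=1$ this matches the condition $\nu_i=N_i$, giving (i). Under $\lct_m\neq 1$, log canonicity forces $\lct_m>1$; then no $i\in S_m$ satisfies $\nu_i=N_i$ and no dlt $m$-valuation exists, giving (ii).

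For (iii), I would first note that for $\Sigma=D$ the strict transform $\tilde D_j$ of each irreducible component $D_j$ of $D$ satisfies $N_{\tilde D_j}=\nu_{\tilde D_j}=1$ and $\mu(\tilde D_j)=D_j\subset\Sigma$, so it lies in $S_m$ for every $m$ and realizes $\lct_m(X,D,D)=1$. Part (i) then yields that dlt $m$-valuations coincide with top contact $m$-valuations, so the statement reduces to showing that every contact $m$-valuation is top. For this I would prove the stronger claim that for every $i\in S_m$ with $\nu_i>N_i$ one has $\sX_{m,i}\subset\overline{\sX_{m,j}}$ for some $j\in S_m$ with $\nu_j=N_j$, so that $\overline{\sX_{m,i}}$ fails to be a maximal component. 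The plan to establish this inclusion is by explicit deformation: arrange the $m$-separating log resolution as an iterated blow-up so that $E_i$ arises above a smooth center $Z$ contained in a log canonical place $E_j\in S_m$ already present at that stage (ultimately a strict transform $\tilde D_k$), then, for each generic $\gamma\in\sX_{m,i}$, construct a one-parameter family $\{\gamma_s\}$ with $\gamma_0=\gamma$ and $\gamma_s\in\sX_{m,j}$ for $s\neq 0$ by perturbing $\gamma$ in local coordinates adapted to $\mu^{-1}(D)$, sliding $\gamma_s(0)$ off $Z$ within $\mu(E_j)$ while simultaneously adjusting higher-order coefficients so that $\ord_{\gamma_s} D=m$ is preserved. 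The main obstacle will be the case when $\mu(E_i)$ lies in an intersection of several components of $D$; there the deformation must be assembled inductively along the blow-up tower extracting $E_i$, reducing at each step to the lower model and ultimately to perturbations at generic smooth points of components of $D$.

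For (iv), I would observe that since $X$ is smooth, $D$ is Cartier and hence Gorenstein; combined with rational singularities, Elkik's theorem gives that $D$ has canonical singularities, which for Gorenstein varieties is equivalent to $(D,0)$ being klt. Inversion of adjunction then yields that $(X,D)$ is purely log terminal in a neighborhood of $D$, so the unique log canonical place of $(X,D)$ with center in $D$ is $\ord_D$, and since $N=1$ divides every $m$ the set of dlt $m$-valuations reduces to $\{\ord_D\}$. Part (iii) then forces the other two sets to coincide with this singleton.
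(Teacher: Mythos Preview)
Your approach to (i) and (ii) is essentially the same as the paper's: both identify the dlt $m$-valuations with the log canonical places $E_i$ (those with $\nu_i=N_i$) having $i\in S_m$, and then compare with Proposition \ref{corLct}. The paper compresses this into a single citation of \cite[1.35]{K}, which records precisely that when $(X,D)$ is log canonical the MMP from $(Y,\Delta)$ contracts exactly the $E_i$ with $a(E_i,X,D)>0$; your route via Odaka--Xu and the negativity lemma is equivalent.

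For (iv) your argument is correct and genuinely different from the paper's. The paper instead invokes Musta\c{t}\u{a}'s theorem \cite[Theorem 0.1]{M1} that $\cL_m(D)$ is irreducible when $D$ has rational singularities, and deduces irreducibility of $\sX_m(X,D)$ directly from (\ref{eqClJs}). Your route through Elkik, inversion of adjunction, and the plt property identifies the unique dlt valuation first and then appeals to (iii); this is a clean alternative.

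The genuine gap is in (iii). Your plan to establish $\sX_{m,i}\subset\overline{\sX_{m,j}}$ by an explicit one-parameter deformation is only a sketch: you do not show that an $m$-separating resolution can always be arranged so that $E_i$ arises from a smooth center inside a log canonical place $E_j$ already in $S_m$, you do not explain how the higher-order corrections keeping $\ord_{\gamma_s}D=m$ are found in general, and you yourself flag the case where $\mu(E_i)$ lies in a stratum of several components of $D$ as an unresolved obstacle. Carrying this out in full generality would amount to re-proving a significant part of the embedded Nash problem by hand. The paper bypasses all of this with a short global argument: since $(X,D)$ is log canonical, $D$ is log canonical (inversion of adjunction, \cite[7.3.2]{Ko}), hence by Ein--Musta\c{t}\u{a} \cite[Theorem 1.3]{EM} every jet scheme $\cL_{m-1}(D)$ is equidimensional; then $\sX_m(X,D)$, being an open subset of $\pi_{m,m-1}^{-1}(\cL_{m-1}(D))$, is equidimensional as well, so every contact $m$-valuation is automatically top. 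This one-line use of equidimensionality of jet schemes replaces your entire deformation program.
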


Part (ii) implies that, if a singular hypersurface $D$ in a smooth variety $X$ has rational singularities, then there are no dlt valuations for $(X,D,D_{sing})$. One wonders then if there is chaos or order among the contact valuations in this case. We address some examples in Section \ref{secOth}.

Assume now that $X$ is a smooth surface. For simplicity, we state the results for an irreducible formal plane curve germ singularity $(\bC^2,C,0)$.  A minimal $m$-separating log resolution exists. Its resolution graph $\Gamma_m$ is a refinement of $\Gamma_{m-1}$, easily obtained by inserting vertices on edges until the $m$-separating condition is satisfied;  $\Gamma_1$ is the resolution graph of the minimal log resolution. The essential $m$-valuations correspond to the subset $S_m$ (as in Definition \ref{eqSm}) of vertices of $\Gamma_m$. 

We label the rupture vertices by $E_{R_1},\ldots, E_{R_g}$, in the order of their appearance in the resolution process. Here $g\ge 1$ is the number of Puiseux pairs of the germ. The strict transform of $C$ is denoted by $\tilde C$. We denote by $Z_j$, for $j=1,\ldots, g$, the subset of arcs in the contact locus $\sX_m=\sX_m(\bC^2,C,0)$ which lift to one of the divisors from the vertical groups in $\Gamma_m$ as in Figure \ref{figr}. Denote by $S_m'$ the remaining vertices in $\Gamma_m$ which also lie in $S_m$. We let $\sX_{m,i}$ for $i\in S_m$ be as in (\ref{eqD}).

\begin{figure}[ht] 
    \centering
    \resizebox{.7\linewidth}{!}{%
    \fontsize{20pt}{20pt}\selectfont
    \def\svgheight{0.5cm}
    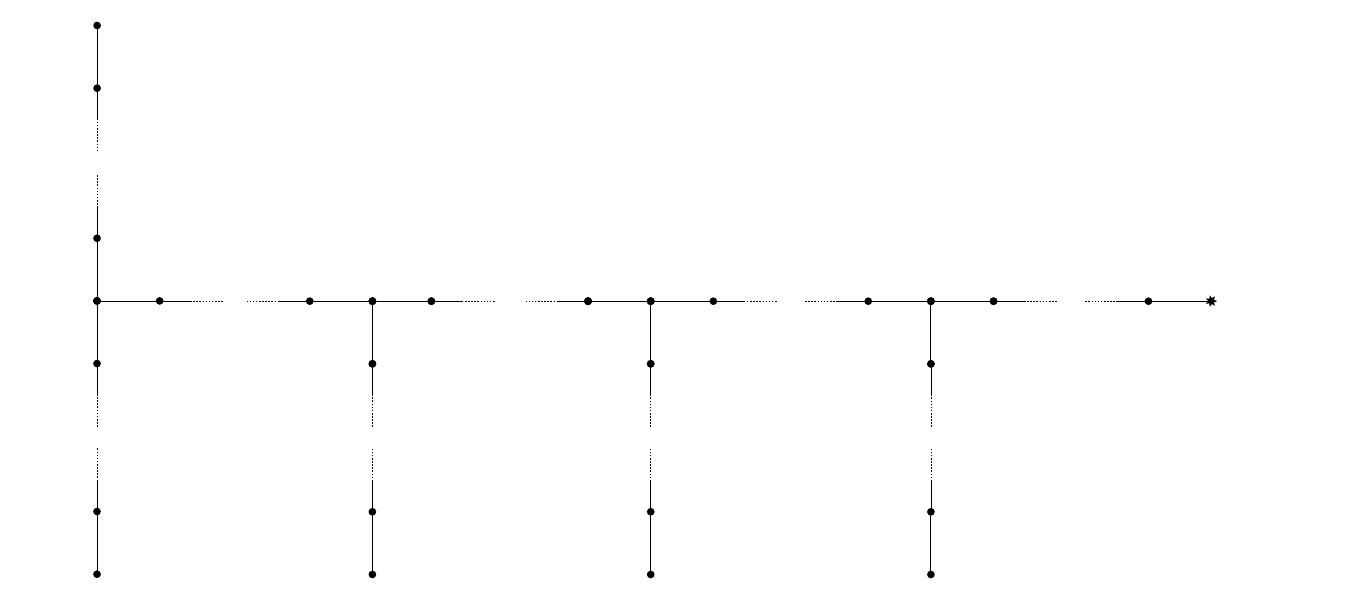
}
    \caption{Resolution graph $\Gamma_m$ for the minimal $m$-separating log resolution, with the irreducible components (if non-empty) of the $m$-contact locus $\sX_m$  in gray.}
    \label{figr}
\end{figure}

\begin{theorem}\label{thmNPCur} Let $(\bC^2,C,O)$ be an irreducible formal plane curve germ singularity with $g\ge 1$ Puiseux pairs. Let $m\ge 1$ be an integer. Then 
$$
\sX_m = \bigsqcup_{j=1}^g Z_j \sqcup  \bigsqcup_{i\in S_m'} \sX_{m,i}
$$
is a disjoint union decomposition such that, in the Zariski topology:

(i) Each non-empty set in the decomposition is  irreducible and a connected component of $\sX_m$.

(ii) $\sX_{m,i}$ is non-empty for $i\in S_m'$.

(iii) $Z_j$ is non-empty if and only if there exists a vertex $i\in S_m$ in the $j$-th vertical group in Figure \ref{figr}.  If $E_{R_1}$ is not in $S_m$, then the vertices with $i \in S_m$ in the first vertical group can only appear on one side of $E_{R_1}$. If $Z_j$ is non-empty, the unique vertex $i_0\in S_m$ in the $j$-th vertical group that is the closest to the rupture divisor $E_{R_j}$ (it could be $E_{R_j}$ itself), satisfies $Z_j = \overline{\sX_{m,i_0}}$.
\end{theorem}

This solves the embedded Nash problem for irreducible formal plane curve germs. 
The dlt $m$-valuations correspond to the points in $S_m$  on the horizontal path from $E_{R_1}$ to $\tilde C$ in  $\Gamma_m$ by Proposition \ref{propDltCurves}.

\comment{

Next, we get more contact valuations admitting a numerical interpretation by introducing numbers which generalize the $m$-log canonical threshold from (\ref{defmlct}). The inspiration comes from the following notion, see \cite[\S3]{ELM}. Let $\beta\in\bQ_{\ge 0}$, $B$ be a closed subscheme of $X$, and $\mu:Y\to X$  a log resolution of $(X,D,\Sigma)$ that is also a log resolution of $D\cup B$. Let $b_i:=\ord_{E_i}B$ for $i\in S$, with the convention that $b_i=0$ if $B=0$. The {\it generalized log canonical threshold of $(X,D; \beta B)$ in a neighborhood of $\Sigma$} is
\be\label{eqglct}
\lct_\Sigma(X,D;\beta B):=\min\left\{\frac{\nu_i +\beta b_i}{N_i}\mid i\in S\text{ and }\mu(E_i)\cap \Sigma\neq\emptyset \right\}.
\ee
It is known that the definition is independent of the choice of $\mu$. If $B=0$ one obtains (\ref{eqlct}). Every jump valuation of $(X,D,\Sigma)$ is given by some $E_i$ with $i\in S$ and $\mu(E_i)\cap \Sigma\neq\emptyset$ and computing $\lct_\Sigma(X,D;\Div(g))$ for some $g\in\cO_X$, see \cite[\S3]{ELM}.

If in addition $\mu$ is an $m$-separating log resolution of $(X,D,\Sigma)$, we define the {\it generalized $m$-log canonical threshold of $(X,D,\Sigma;\beta B)$} by
\be\label{eqgmlct}
\lct_m(X,D,\Sigma;\beta B):=\min\left\{\frac{\nu_i +\beta b_i}{N_i}\mid i\in S_m\right\}.
\ee
 If $B=0$ one obtains (\ref{defmlct}).  The valuations computing generalized $m$-log canonical thresholds for some $\beta B$ are not necessarily jump valuations even if they are not $m$-log canonical thresholds, e.g. for the plane cuspidal cubic curve with $\Sigma$ the origin and $m=12$. The definition is independent of $\mu$ by the next result. Recall that  a cylinder in the arc space $ \cL(X)$ is the inverse image under the truncation map of a constructible subset of the space of $l$-jets for some $l\ge 0$. For a cylinder $C$,  we denote by $\codim\; C$ the codimension in the space of arcs $ \cL(X)$, and by $\ord_C(B)$  the minimal contact order of an arc in $C$ with $B$. The following generalizes Proposition \ref{corLct}, and it was proved for $\lct_\Sigma(X,D;\beta B)$ in \cite[Corollary 3.5]{ELM}:

\begin{prop}\label{propGlct}
If $\beta,B, \mu$ are as in the definition (\ref{eqgmlct}), then $m\cdot \lct_m(X,D,\Sigma;\beta B)$ is the minimum value of 
$\codim\; C + \beta \cdot\ord_CB$ over all irreducible components $C$ of $\sX_m $. Conversely, if $E_i$ with $i\in S_m$ satisfies 
$$\lct_m(X,D,\Sigma;\beta B)=\frac{\nu_i +\beta b_i}{N_i},$$
then $E_i$ defines a contact $m$-valuation of $(X,D,\Sigma)$. 
\end{prop}

Define a {\it special $m$-valuation} of $(X,D,\Sigma)$ to be any  $m$-valuation computing $\lct_m(X,D,\Sigma; \beta B)$ for some $m$ and $\beta B$, with $B$ log resolved by some $m$-separating log resolution $\mu$ of $(X,D,\Sigma)$ and with the support of $\mu^{-1}B$ included in $\cup_{i\in S}E_i$. Thus
$$
\{\text{top contact $m$-valuations}\}\subset \{\text{special  $m$-valuations}\} \subset \{\text{contact $m$-valuations}\}.
$$

By considering the cases when $B$ are log generic hypersurfaces as defined in \cite{BuV}, we obtain a numerical sufficient condition for an $m$-valuation to be a special, hence contact, $m$-valuation:

\begin{thm}\label{thmLog} Let $X$ be a smooth complex algebraic variety, $D$ a non-zero effective divisor, and $\Sigma$ a Zariski closed subset of the support of $D$. Assume in addition that $X$ is quasi-projective and  $\codim\;\Sigma\ge 2$.
Let $\mu$ be an $m$-separating log resolution of $(X,D,\Sigma)$. Let $b=(b_j)_j$ with $j\in S$ such that $E_j$ is $\mu$-exceptional and $b_j\in\bQ_{> 0}$, satisfy that $C\cdot \sum_j b_j E_j< 0$ for every irreducible $\mu$-exceptional curve (that is, $-\sum_jb_jE_j$ lies in the ample cone over $X$). If $E_i$ with $i\in S_m$ computes the minimum value of $b_j/N_j$ over $j\in S_m$, then $E_i$ defines a special, and hence a contact, $m$-valuation of $(X,D,\Sigma)$.
\end{thm}

}

\sm

\noi
{\bf Organization.}
In Section \ref{secMM} we recall facts about relative minimal models. In Section \ref{secMsep} we recall the arc spaces and contact loci, and prove Theorem \ref{thmM}, Theorem \ref{thmJV}, Proposition \ref{corLct}, Theorem \ref{thmLcv}. In Section \ref{secHA} we address  hyperplane arrangements. In Section \ref{secOth} we look at some  hypersurfaces with rational singularities. In Section \ref{secCur} we characterize the essential and the dlt valuations in terms of the resolution graph for curves on smooth surfaces. In Section \ref{secCNP} we prove Theorem \ref{thmNPCur}.

\sm

\noi{\bf Acknowledgement.} We thank:  H. Mourtada, M. Musta\c{t}\u{a}, J. Nicaise, K. Palka, N. Potemans, M. Saito, R. van der Veer, W. Veys,  and C. Xu for comments, and IHES and UIB Palma for hospitality. We are especially grateful to a referee who pointed out a gap in the proof of Theorem \ref{decomp} in the first  version of this article. The authors were supported by the grants: Methusalem METH/15/026, G097819N, G0B3123N from FWO,  SEV-2017-0718-19-1, PRE2019-087976, PID2020-114750GB-C33 from Spanish Ministry of Science,  BERC 2018-2021 and IT1094-16 from  Basque Government. This work started at a research in pairs at CIRM in the framework of a Jean Morlet Semester, where the authors enjoyed excellent working conditions.

\section{Relative minimal models}\label{secMM}

We recall some basic terminology and facts about  minimal models, cf. \cite{KM, K}. A {\it divisor} on a normal complex variety $X$ will mean a Weil divisor. If we allow $\bQ$-coefficients, we refer to it as a {\it $\bQ$-divisor}. If $D_1, D_2$ are $\bQ$-divisors, $D_1\ge D_2$ means that the inequality is satisfied by the coefficients of each prime divisor. If $D$ is a divisor, recall that $\cO_X(D)$ denotes the sheaf of rational functions $g$ with $\Div (g)+D\ge 0$. It is a reflexive sheaf of rank 1, and it is locally free if and only if $D$ is Cartier. We say that $X$ is {\it $\bQ$-factorial} if every $\bQ$-divisor is $\bQ$-Cartier.

\begin{defn}
Let $(X,D)$ be a pair consisting of a normal complex variety $X$ and a $\bQ$-divisor $D$ on $X$ such that $K_X+D$ is $\bQ$-Cartier. If $E$ is a prime divisor over $X$, the {\it log discrepancy of $E$ with respect to $(X,D)$}, denoted $a(E,X,D)$, is defined as follows. Let  $\mu:Y\to X$ be a birational morphism from a normal variety $Y$. Then there is a $\bQ$-linear equivalence of $\bQ$-Cartier $\bQ$-divisors 
$$
K_Y+ \mu_*^{-1}D + \Ex(\mu) \lQ \mu^*(K_X+D) +\sum_{i\in I}a(E_i,X,D) E_i
$$
for some $a(E_i,X,D)\in \bQ$, where $\mu_*^{-1}$ denotes the strict transform, and $\Ex(\mu)=\sum_{i\in I}E_i$ is the sum of the exceptional prime divisors with coefficients 1. If $E=E_i$ for some $\mu$, define $a(E,X,D)=a(E_i,X,D)$.  If $E$ is an irreducible component of $D$, define $a(E,X,D)=1-\ord_{E}D$. For all other prime divisors $E$ over $X$ set $a(E,X,D)=0$.    This definition is independent of the choice of $\mu$. Note that $a(E_i,X,D)=\nu_i-N_i$ for all $i\in I$, where $\nu_i-1=\ord_{E_i}(K_{Y/X})$ and $N_i=\ord_{E_i}D$. 
\end{defn}

\begin{defn}\label{defDltY}
Let $(X,D)$ be a pair consisting of  a normal variety $X$ and  a $\bQ$-divisor $D=\sum_ia_iD_i$ such that $K_X+D$ is $\bQ$-Cartier. 
We assume that $D$ is a {\it boundary}, that is, $0\le a_i\le 1$ for all $i$. Then we say that $(X,D)$ is a {\it divisorial log terminal pair} (or {\it dlt pair}) if there is a closed subset $Z\subset X$ such that $X\setminus Z$ is smooth, $D_{| X\setminus Z}$ is a simple normal crossings divisor, and if for a (equivalently, for all)  log resolution  $\mu:Y\to X$ of $(X,D)$ with $\mu^{-1}(Z)$  pure of codimension 1, the condition $a(E,X,D)>0$ is satisfied for every prime divisor $E\subset \mu^{-1}(Z)$. In other words, a dlt pair is snc outside the {\it klt locus} $Z$.  We say that $(X,D)$ is a {\it log canonical pair} if $a(E,X,D)\ge 0$ for all prime divisors $E$ over $X$. In particular, a dlt pair is  log canonical.
\end{defn}

\begin{defn} Let $\mu:Y\to X$ be a projective morphism between normal varieties, and let $K_Y+\Delta$ be a $\bQ$-Cartier $\bQ$-divisor. Then $K_Y+\Delta$ is {\it semiample over $X$} if there exist a  morphism $\phi:Y\to Z$ to a normal variety $Z$ over $X$, and a $\bQ$-divisor $A$ on $Z$ which is ample over $X$, such that $K_Y+\Delta\sim_\bQ\phi^*A$. \end{defn}

\begin{defn}\label{defMM}

Let $(Y,\Delta)$ be a dlt pair, $\mu:Y\to X$ a projective morphism of complex algebraic varieties, and $D$ a boundary on $X$.

(i) We say that $(Y,\Delta)$ is a {\it  minimal model} (respectively {\it good minimal model})  {\it over $X$} if $K_Y+\Delta$ is $\mu$-nef (resp. $\mu$-semiample). 

(ii) We say that $(Y,\Delta)$ is a {\it dlt modification} (resp. {\it good dlt modification}) {\it of $(X,D)$} if it is a (good) minimal model over $X$ and $\Delta=\mu_*^{-1}D+ \Ex(\mu)$.

(iii) Let $(Y',\Delta')$ be a pair sitting in a diagram
\be\label{eqMM2}
\xymatrix{
(Y,\Delta)  \ar[dr]_\mu \ar@{-->}[rr]^{\phi}& & (Y',\Delta') \ar[dl]^{\mu'} \\
& X. &
}
\ee
We say $(Y',\Delta')$ is a {\it minimal model} (resp. {\it good minimal model})  {\it of $(Y,\Delta)$ over $X$} if:

\begin{enumerate}[(a)]
\item $\phi:Y\dashrightarrow Y'$ is a birational contraction, that is, $\phi^{-1}$ has no exceptional divisors,
\item $\mu':Y'\to X$ a projective morphism, and $\mu'\circ\phi=\mu$ as birational maps,
\item $Y'$ is a  normal variety,
\item $\Delta'=\phi_*\Delta$,
\item $a(E,Y,\Delta)<a(E,Y',\Delta')$ for every $\phi$-exceptional divisor $E\subset Y$, 
\item $K_{Y'}+\Delta'$ is $\bQ$-Cartier and $\mu'$-nef (resp. $\mu'$-semiample).
\end{enumerate}

\end{defn}

The conditions (a)-(f) imply that $a(E,Y,\Delta)\le a(E,Y',\Delta')$ for all divisors $E$ over $Y$ by \cite[Proposition 1.22]{K}, but it is not automatically clear if $(Y',\Delta')$ is dlt.   

\begin{thm}\label{thmLocIso} (\cite[Corollary 1.23]{K}) If $(Y',\Delta')$ is as in Definition \ref{defMM} and it is obtained by running the MMP for $(Y,\Delta)$ over $X$, then:

(i) $(Y',\Delta')$ is a dlt pair, and

(ii) For any prime divisor $E$ over $Y$, $a(E,Y,\Delta)<a(E,Y',\Delta')$ iff $\phi$ is not a local isomorphism at the generic point of the center of $E$ on $Y$. 

\end{thm}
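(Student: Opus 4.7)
The plan is to decompose the birational map $\phi:Y\dashrightarrow Y'$ into a finite sequence of elementary MMP steps over $X$ — each one either a $(K_Y+\Delta)$-negative divisorial contraction or a flip — and then verify both (i) and (ii) step by step, concluding by induction. Each elementary step is explicit enough that dlt-preservation and the monotonicity of log discrepancies can be analyzed directly.

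For (i), at a single step I would use the standard facts that a $(K+\Delta)$-negative divisorial contraction of a $\mathbb{Q}$-factorial dlt pair yields a $\mathbb{Q}$-factorial dlt pair (the contracted divisor has log discrepancy $\le 1$, and for all other divisors the log discrepancy only grows), and that flips preserve the dlt property because the flipping and flipped contractions are small. Starting from a $\mathbb{Q}$-factorial dlt input (which can be arranged since we may replace the initial log resolution by its $\mathbb{Q}$-factorial dlt modification), the dlt structure propagates through the entire run of the MMP.

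For (ii), the non-strict inequality $a(E,Y,\Delta)\le a(E,Y',\Delta')$ is the usual monotonicity of log discrepancies under the MMP. For the refinement to strict inequality I would work on a common resolution
$$
\xymatrix{ & W \ar[dl]_p \ar[dr]^q & \\ Y \ar@{-->}[rr]^{\phi} & & Y' }
$$
and write $p^{*}(K_Y+\Delta) = q^{*}(K_{Y'}+\Delta') + F$. Since $\phi$ is a birational contraction along $(K+\Delta)$-negative rays and $K_{Y'}+\Delta'$ is $\mu'$-nef, the Negativity Lemma forces $F\ge 0$. The key input is its strong form: the coefficient of a prime divisor $G\subset W$ in $F$ is strictly positive precisely when $p(G)$ is contained in the non-isomorphism locus of $\phi$ in $Y$. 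Translating via $a(E,Y,\Delta) - a(E,Y',\Delta') = -\mathrm{ord}_E F$, computed on any model dominating $W$ after identifying $E$ with its strict transform, yields strict inequality exactly when $c_Y(E)$ lies in the non-isomorphism locus of $\phi$. The converse direction is immediate: discrepancies are local invariants of $(Y,\Delta)$ at $c_Y(E)$, and if $\phi$ is a local isomorphism there then $(Y,\Delta)\simeq (Y',\Delta')$ in a neighborhood of $c_Y(E)$, so the log discrepancies coincide.

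The hard step is the strict form of the Negativity Lemma used in (ii): one must improve $F\ge 0$ to strict positivity along every $q$-exceptional divisor whose image meets the contracted locus in $Y$. This requires an intersection-theoretic argument with a $\mu'$-ample divisor pulled back through $q$, combined with the connectedness of exceptional fibers. Everything else — the monotonicity of discrepancies and the preservation of dlt — reduces to the single-step analysis, and for the whole result one can simply invoke \cite[Corollary 1.23]{K}.
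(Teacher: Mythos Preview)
The paper does not prove this statement: it is recorded as a theorem with attribution to \cite[Corollary 1.23]{K} and no argument is given. Your sketch is a reasonable outline of the standard proof found in that reference (induction on MMP steps for dlt-preservation, and the Negativity Lemma on a common resolution for the discrepancy comparison), and you yourself conclude by invoking the same citation, so there is no meaningful divergence to report.

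One small caution on your sketch of (ii): the sentence ``since $\phi$ is a birational contraction along $(K+\Delta)$-negative rays and $K_{Y'}+\Delta'$ is $\mu'$-nef, the Negativity Lemma forces $F\ge 0$'' hides the actual hypothesis one needs. The Negativity Lemma applies to $F$ viewed as a $q$-exceptional divisor (which it is, because $\phi^{-1}$ contracts no divisors) such that $-F$ is $q$-nef; the latter follows from $q^*(K_{Y'}+\Delta')$ being $q$-trivial and $-p^*(K_Y+\Delta)$ being $q$-nef, which in turn is what requires the step-by-step MMP structure (or the assumption $q_*F\ge 0$ coming from condition (e)), not merely $\mu'$-nefness of $K_{Y'}+\Delta'$. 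This is exactly the content packaged in \cite[1.22--1.23]{K}, so the citation covers it, but the justification you wrote is not quite the right one.
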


Regarding the validity of the minimal model program one has the following. While it is not stated in the same way as in \cite{OX}, the proof of the main result contains it.

\begin{thm}\label{thmEMM} (Odaka-Xu \cite{OX})  Let $X$ be a normal quasi-projective variety  and $D$ a boundary on $X$ such that $K_X+D$ is $\bQ$-Cartier (resp. $X$ is also $\bQ$-factorial). For any log resolution $\mu:Y\to X$ of $(X,D)$, the minimal model program for $(Y,\Delta=\mu_*^{-1}D+\Ex(\mu))$ over $X$ runs, that is, flips exist, and terminates with  good dlt minimal models (resp.  good dlt $\bQ$-factorial minimal models) of $(Y,\Delta)$ over $X$.
\end{thm}

One can characterize which  divisors get contracted on minimal models using stable base loci:

\begin{defn}
Let $\mu:Y\to X$ be a projective morphism of normal varieties. Assume that  $X$ is affine (for our applications this case suffices).
If $\Delta$ is a $\bQ$-Cartier $\bQ$-divisor on $Y$, the {\it stable base locus of $\Delta$ over $X$} is $$\bB(\Delta/X)=\bigcap_{k\in \bN,\, k\Delta \text{ is Cartier}}\quad\bigcap_{s\in H^0(Y,\cO_Y(k\Delta))}\Supp(\Div(s)).$$ 
\end{defn}

\begin{prop}\label{propBE}(\cite[1.21]{K}, \cite[Lemma 2.4]{HX})
Let $\mu:Y\to X$ be a projective morphism of varieties, $(Y,\Delta)$ a dlt pair, and $\phi:Y\da Y'$, $\psi:Y\da Y''$ two minimal models of $(Y,\Delta)$ over $X$. Then:

(a)  $Y'\da Y''$ is an isomorphism in codimension 1 such that for all prime divisors $E$ over $X$ one has $a(E,Y',\phi_*\Delta)=a(E,Y'',\psi_*\Delta)$.

(b) If $(Y',\phi_*\Delta)$ is a dlt good minimal model of $(Y,\Delta)$ over $X$, so is $(Y'',\psi_*\Delta)$.

(c) If $(Y',\phi_*\Delta)$ is a dlt good minimal model of $(Y,\Delta)$ over $X$, then the set of $\phi$-exceptional divisors is the set of divisors contained in the stable base locus $\bB(K_Y+\Delta/X)$. 
\end{prop}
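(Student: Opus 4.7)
The plan is to prove (a) via the negativity lemma on a common log resolution, and then deduce (b) and (c) as formal consequences; the hard part will be (a), specifically the symmetric application of the negativity lemma and the dlt-dependent concluding step.

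For \emph{part (a)}, I would take a smooth common resolution $p\colon W \to Y'$ and $q\colon W \to Y''$ of the induced birational map $\sigma := \psi \circ \phi^{-1}$, and set $L' := K_{Y'} + \phi_*\Delta$ and $L'' := K_{Y''} + \psi_*\Delta$, both nef over $X$. The core step is to analyze $G := p^*L' - q^*L''$ by decomposing $G = G_+ - G_-$ into effective positive and negative parts; applying the negativity lemma (\cite[Lemma 3.39]{KM}) to the $X$-nef class $L''$ pulled back through $q$ on the one hand, and to $L'$ pulled back through $p$ on the other, one concludes $G_+ = G_- = 0$. Reading the resulting equality $p^*L' = q^*L''$ through the discrepancy formula yields $a(E, Y', \phi_*\Delta) = a(E, Y'', \psi_*\Delta)$ for every prime divisor $E$ over $X$. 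To rule out a $\sigma$-exceptional prime $E \subset Y'$, its strict transform on $W$ is $q$-exceptional but not $p$-exceptional; combining the just-proved discrepancy equality with the dlt property of $(Y'', \psi_*\Delta)$ (which bounds log discrepancies at divisors with center of codimension $\geq 2$) produces a contradiction, establishing that $\sigma$ is an isomorphism in codimension one.

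For \emph{part (b)}, I would use the semiampleness of $L'$ over $X$ to obtain the Iitaka/canonical fibration $h\colon Y' \to Z$ over $X$ with $L' \sim_\bQ h^*A$ for some $A$ ample over $X$. Since $\sigma$ is an isomorphism in codimension one and $Y', Y''$ are normal, the section rings $\bigoplus_{m \geq 0} H^0(Y', \lfloor mL' \rfloor)$ and $\bigoplus_{m \geq 0} H^0(Y'', \lfloor mL'' \rfloor)$ are canonically isomorphic (by Hartogs' principle on the codimension-$\geq 2$ complement), so $\mathrm{Proj}$ yields the same $Z$. The $X$-nefness of $L''$ combined with this codimension-one isomorphism, as in \cite[Lemma 2.4]{HX}, then ensures that the induced rational map $Y'' \dashrightarrow Z$ is a morphism $h'$, and $L'' \sim_\bQ (h')^*A$ exhibits $L''$ as semiample over $X$.

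For \emph{part (c)}, I would take a common resolution $p\colon W \to Y$, $q\colon W \to Y'$ of $\phi$. The fact that $\phi$ is a birational contraction into a minimal model (conditions (a), (e), (f) of Definition \ref{defMM}(iii)) gives the discrepancy comparison $a(E, Y, \Delta) \leq a(E, Y', \phi_*\Delta)$ with strict inequality exactly on $\phi$-exceptional divisors (by Theorem \ref{thmLocIso}(ii) when $(Y', \phi_*\Delta)$ arises from an MMP, and in general by the same common-resolution argument as in (a)). This translates into
\[
p^*(K_Y + \Delta) = q^*L' + F
\]
with $F$ effective whose support on $W$ is precisely the union of $p$-exceptional divisors and strict transforms of $\phi$-exceptional primes. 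Any section of $m(K_Y + \Delta)$ therefore vanishes along every $\phi$-exceptional prime, giving the inclusion $\{\phi\text{-exceptional divisors}\} \subseteq \bB(K_Y + \Delta / X)$. For the converse, let $E \subset Y$ be a non-$\phi$-exceptional prime; semiampleness of $L'$ yields, for $m$ divisible enough, a section of $mL'$ not vanishing along $\phi_*E$, and pulling it back through $q$ and then pushing forward through $p$ produces a section of $m(K_Y + \Delta)$ not vanishing on $E$, completing the proof.
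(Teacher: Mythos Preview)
The paper does not prove this proposition itself; it is stated as a citation of \cite[1.21]{K} and \cite[Lemma 2.4]{HX}. Your plan follows essentially the standard route taken in those references: the negativity lemma on a common resolution for the discrepancy equality in (a), identification of section rings via the codimension-one isomorphism for (b), and the discrepancy comparison together with semiampleness for (c). Parts (b) and (c) are fine as sketched.

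There is, however, a genuine gap in your argument for the ``isomorphism in codimension one'' step of (a). You invoke ``the dlt property of $(Y'', \psi_*\Delta)$'' to obtain a contradiction, but this property is not among the hypotheses: Definition~\ref{defMM}(iii) does not require the target pair of a minimal model to be dlt, and the paper explicitly remarks just after that definition that ``it is not automatically clear if $(Y',\Delta')$ is dlt.'' Even if one grants dlt, your contradiction is unclear: dlt only forces log discrepancies to be strictly positive for divisors centered over the klt locus, while $a(E, Y', \phi_*\Delta) = 1 - \mathrm{coeff}_E(\phi_*\Delta) \in [0,1]$ in any case, so no contradiction arises from a mere lower bound. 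The correct argument uses condition (e) of Definition~\ref{defMM}(iii) instead: a $\sigma$-exceptional prime $E \subset Y'$ is the strict transform of some prime $E_Y \subset Y$ (since $\phi^{-1}$ has no exceptional divisors by condition (a)); then $E_Y$ is $\psi$-exceptional but not $\phi$-exceptional, so $a(E_Y, Y, \Delta) = a(E, Y', \phi_*\Delta)$, and your discrepancy equality gives $a(E, Y', \phi_*\Delta) = a(E_Y, Y'', \psi_*\Delta)$, contradicting the strict inequality $a(E_Y, Y, \Delta) < a(E_Y, Y'', \psi_*\Delta)$ from condition (e) applied to $\psi$.
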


One can compare minimal models obtained from a tower of projective birational morphisms:

\begin{prop}\label{propFc}(\cite[1.27]{K})  Let $\mu:Y\to X$ be a projective morphism of varieties. Let $\pi:Y_1\to Y$ be a projective birational morphism. Let $\Delta_1$ and $\Delta$ be $\bQ$-divisors on $Y_1$ and $Y$, respectively, such that $\Delta=\pi_*\Delta_1$. Assume that $(Y_1,\Delta_1)$, $(Y,\Delta)$ are dlt pairs and that $a(E,Y_1,\Delta_1)<a(E,Y,\Delta)$ for every prime $\pi$-exceptional divisor $E\subset Y_1$. Then every minimal model of $(Y,\Delta)$ over $X$ is also a minimal model of $(Y_1,\Delta_1)$ over $X$.\end{prop}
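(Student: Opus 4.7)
My plan is to verify that $(Y',\Delta')$, equipped with the composed birational map $\phi\circ\pi:Y_1\dashrightarrow Y'$ (where $\phi:Y\dashrightarrow Y'$ is the given birational contraction onto the minimal model of $(Y,\Delta)$), satisfies all six clauses (a)--(f) of Definition \ref{defMM} as a minimal model of $(Y_1,\Delta_1)$ over $X$. Items (b), (c), (f) are inherited directly from the hypothesis on $(Y',\Delta')$, with new base morphism $\mu'\circ(\phi\circ\pi)=\mu\circ\pi$, while (d) follows from the two-step pushforward $(\phi\circ\pi)_*\Delta_1=\phi_*(\pi_*\Delta_1)=\phi_*\Delta=\Delta'$. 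For (a), given any prime divisor $F\subset Y'$, the strict transform $\phi^{-1}_*F$ is a divisor on $Y$ because $\phi$ is a birational contraction, and then $\pi^{-1}_*(\phi^{-1}_*F)=(\phi\circ\pi)^{-1}_*F$ is a divisor on $Y_1$ because $\pi$ is a projective birational morphism.

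\textbf{The main condition (e).} Let $E\subset Y_1$ be a $(\phi\circ\pi)$-exceptional prime divisor; I would split according to whether $\pi$ contracts $E$. If $E$ is $\pi$-exceptional, the hypothesis supplies $a(E,Y_1,\Delta_1)<a(E,Y,\Delta)$, and the monotonicity of log discrepancies for minimal models (\cite[Proposition 1.22]{K}, recalled just after Definition \ref{defMM}) gives $a(E,Y,\Delta)\le a(E,Y',\Delta')$; the two combine to the required strict inequality. If $E$ is not $\pi$-exceptional, set $E':=\pi(E)\subset Y$, a prime divisor. The morphism $\pi$ is a local isomorphism at the generic point of $E$, so $\ord_E$ and $\ord_{E'}$ coincide as valuations; in particular $a(E,Y',\Delta')=a(E',Y',\Delta')$. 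The relation $\Delta=\pi_*\Delta_1$ matches the coefficient of $E$ in $\Delta_1$ with that of $E'$ in $\Delta$, giving $a(E,Y_1,\Delta_1)=a(E',Y,\Delta)$. Because $\pi$ is a local isomorphism near the generic point of $E$ yet $\phi\circ\pi$ contracts $E$, the birational map $\phi$ must contract $E'$; condition (e) for $(Y',\Delta')$ as a minimal model of $(Y,\Delta)$ then yields $a(E',Y,\Delta)<a(E',Y',\Delta')$, and the chain closes.

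\textbf{Main obstacle.} The genuinely delicate step is the non-$\pi$-exceptional case of (e). It requires both the identification $a(E,Y_1,\Delta_1)=a(E',Y,\Delta)$, which relies crucially on $\Delta$ being the pushforward of $\Delta_1$, and the transfer of the statement that $\phi\circ\pi$ contracts $E$ to the statement that $\phi$ contracts $E'$, made possible by the fact that $\pi$ is a local isomorphism at the generic point of $E$. Once these compatibilities are in place, the proposition is assembled by tracking log discrepancies across the two birational maps.
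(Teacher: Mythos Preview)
Your proof is correct. The paper does not provide its own proof of this proposition; it is simply quoted from \cite[1.27]{K}. Your verification of clauses (a)--(f) of Definition~\ref{defMM} is the standard argument, and it matches the approach in Koll\'ar's book: the only substantive condition is (e), which you handle by the natural case split on whether $E$ is $\pi$-exceptional, invoking the hypothesis in the first case and the strict inequality built into Definition~\ref{defMM}(e) for $(Y',\Delta')$ over $(Y,\Delta)$ in the second.
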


A consequence of this proposition is:

\begin{lemma}\label{lemDltm}
Let $X$ be a complex algebraic variety, $D$ a boundary divisor on $X$. Suppose that $(Y',\Delta')$ is a dlt modification of $(X,D)$. Then $(Y',\Delta')$ is a minimal model of $(Y,\Delta)$ over $X$ for any log resolution $\phi:Y\to Y'$ of $(Y',\Delta')$ that is an isomorphism over the non-klt locus of $(Y',\Delta')$, where $\Delta=\phi_*^{-1}\Delta'+\Ex(\phi)$.
\end{lemma}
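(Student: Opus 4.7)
The strategy is to recognize this as an instance of Proposition \ref{propFc} applied to the tower $Y \xrightarrow{\phi} Y' \xrightarrow{\mu'} X$, where $\mu'$ is the structural morphism of the dlt modification. In the notation of that proposition, I take $Y_1 = Y$, $\Delta_1 = \Delta$, the middle pair to be $(Y',\Delta')$, and $\pi = \phi$. The proposition will then guarantee that any minimal model of $(Y',\Delta')$ over $X$ is also a minimal model of $(Y,\Delta)$ over $X$. Since $(Y',\Delta')$ is assumed to be a dlt modification of $(X,D)$, by Definition \ref{defMM}(ii) the divisor $K_{Y'}+\Delta'$ is $\mu'$-nef, so $(Y',\Delta')$ is trivially a minimal model of itself over $X$ (take $\phi$ to be the identity in diagram (\ref{eqMM2}); conditions (a)--(f) are automatic). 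The conclusion then follows.

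To invoke Proposition \ref{propFc}, four verifications are needed. First, $(Y',\Delta')$ is dlt by assumption. Second, $\phi_*\Delta = \phi_*\phi_*^{-1}\Delta' + \phi_*\Ex(\phi) = \Delta' + 0 = \Delta'$, since $\phi$-exceptional divisors map to proper subvarieties of $Y'$. Third, $(Y,\Delta)$ is snc (in particular dlt): indeed, because $\phi$ is a log resolution of $(Y',\Delta')$, the variety $Y$ is smooth and $\Supp(\Delta) = \phi_*^{-1}(\Supp \Delta') \cup \Ex(\phi)$ is simple normal crossings, while the coefficients in $\Delta = \phi_*^{-1}\Delta' + \Ex(\phi)$ all lie in $[0,1]$.

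The crucial fourth step is the strict inequality $a(E,Y,\Delta) < a(E,Y',\Delta')$ for every prime $\phi$-exceptional divisor $E \subset Y$. By construction $E$ is a component of $\Ex(\phi) \subset \Delta$ with coefficient $1$, so $a(E,Y,\Delta) = 1 - \ord_E \Delta = 0$. On the other hand, since $\phi$ is an isomorphism over the non-klt locus of $(Y',\Delta')$ and $E$ is $\phi$-exceptional, the center of $E$ on $Y'$ must avoid that locus, hence it lies in the open subset where $(Y',\Delta')$ is klt. By the dlt hypothesis on $(Y',\Delta')$, the log discrepancy of any exceptional divisor with center in this klt locus is strictly positive, so $a(E,Y',\Delta')>0$, yielding $a(E,Y,\Delta) = 0 < a(E,Y',\Delta')$.

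The main obstacle is this fourth step: the strict inequality of log discrepancies relies essentially on the assumption that $\phi$ is an isomorphism over the non-klt locus. Without this, a $\phi$-exceptional divisor could have its center on an LC center of $(Y',\Delta')$ (a component of $\Delta'$ with coefficient $1$), producing $a(E,Y',\Delta') = 0$ and breaking the comparison. Once this is settled, the cited proposition closes the argument.
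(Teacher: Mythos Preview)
Your proof is correct and follows essentially the same approach as the paper: both reduce to Proposition \ref{propFc} by checking that $a(E,Y,\Delta)=0<a(E,Y',\Delta')$ for every $\phi$-exceptional $E$, with the strict positivity coming from the dlt hypothesis together with the assumption that $\phi$ is an isomorphism over the non-klt locus. Your write-up is more explicit than the paper's (you spell out $\phi_*\Delta=\Delta'$, that $(Y,\Delta)$ is snc, and that $(Y',\Delta')$ is trivially its own minimal model over $X$), but the argument is the same; the paper additionally remarks that $\Delta=\mu_*^{-1}D+\Ex(\mu)$ for $\mu=\mu'\circ\phi$, which is contextual rather than logically necessary.
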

\begin{proof}
Note that $\Delta$ is also equal to $\mu_*^{-1}D+\Ex(\mu)$, where $\mu=\mu'\circ\phi$. Since $(Y',\Delta')$ is a dlt pair, $a(E,Y',\Delta')>0$ for every $\phi$-exceptional divisor $E$. For such $E$, $a(E,Y,\Delta)=0$. The claim then follows from Proposition \ref{propFc}. 
\end{proof}

The following lemma will be useful for the characterization  of dlt $m$-valuations: 

\begin{lemma}\label{lemNNew}
Let $X$ be smooth affine variety, $D$ a boundary on $X$, $\mu:Y\to X$ a log resolution of $(X,D)$, and $\Delta=\mu_*^{-1}D+\Ex(\mu)$. Let $\pi:Y_1\to Y$ be a projective birational morphism such that $\mu_1=\mu\circ \pi$ is a log resolution of $(X,D)$, and let $\Delta_1=(\mu_1)_*^{-1}D+\Ex(\mu_1)$. Let $F$ be a prime divisor on $Y_1$ such that $E=\pi_*F$ is a non-zero  divisor. Then $F$ does not get contracted on minimal models of $(Y_1,\Delta_1)$ over $X$ if and only if $E$  does not get contracted on  minimal models of $(Y,\Delta)$ over $X$.
\end{lemma}
\begin{proof}
Theorem \ref{thmEMM} applies and gives a good minimal models for $\mu$ and $\mu_1$. Then by Proposition \ref{propBE} all minimal models are good, and we can use stable base loci to describe  which divisors get contracted or not on minimal models. Suppose that $F$ does not get contracted. Then there is some $k\in \bN$ such that $F$ is not in the base locus of $k(K_{Y_1}+\Delta_1)$ over $X$.

Let $G=\sum_{E'}a(E',X,D)E'$ where the sum is over the $\mu$-exceptional prime divisors $E'$ in $Y$, so
$
K_Y+\Delta = \mu^*(K_X+D) +G.
$ 
Let $P=\sum_{F'}a(F',Y,\Delta)F'$ where the sum is over the $\pi$-exceptional prime divisors $F'$ in $Y_1$. Then
$
K_{Y_1}+\Delta_1 = \pi^*(K_Y+\Delta)+ P.
$ 
Then $P$ is effective, since $(Y,\Delta)$ is dlt. Then
$$
K_{Y_1}+\Delta_1 = \mu_1^*(K_X+D) +\pi^*G + P \sim_{\mu_1} \pi^*G+P.
$$
Thus there is a strict inclusion of vector subspaces
$$
H^0(Y_1,\cO_{Y_1}(k(\pi^*G+P)-F))\subsetneq H^0(Y_1,\cO_{Y_1}(k(\pi^*G+P)))
$$
because $F$ is not in the base locus of $k(K_{Y_1}+\Delta_1)$ over $X$. 
By the definition of $\pi_*$, this is the same as
$$
H^0(Y,\pi_*\cO_{Y_1}(k(\pi^*G+P)-F))\subsetneq H^0(Y,\pi_*\cO_{Y_1}(k(\pi^*G+P))).
$$
Now, $$\pi_*\cO_{Y_1}(k(\pi^*G+P))=\cO_Y(kG)\otimes_{\cO_Y}\pi_*\cO_{Y_1}(kP) = \cO_Y(kG),$$
the first equality due to the projection formula, the second equality due to $kP$ being effective and $\pi$-exceptional. Similarly,
$$
\pi_*\cO_{Y_1}(k(\pi^*G+P)-F) =\cO_Y(kG)\otimes_{\cO_Y}\pi_*\cO_{Y_1}(kP-F) = \cO_Y(kG-E),$$
tha last equality due to $F$ being the strict transform of $E$. 
Thus the above strict inclusion is 
$$
H^0(Y,\cO_Y(kG-E))\subsetneq H^0(Y,\cO_Y(kG)),
$$
and so $E$ is not contained in $\bB(G/X)=\bB(K_Y+\Delta/X)$. Hence $E$ does not get contracted on minimal models of $(Y,\Delta)$ over $X$. Also, it is clear that the proof runs in the converse direction as well. 
\end{proof}

\section{Dlt and contact valuations}\label{secMsep}

\subs{\bf Arcs, jets, and contact loci.}  For $l\in\bN$, the {\it $l$-jet space} $\cL_l(X)$  of a complex variety $X$ is the $\bC$-scheme of finite type whose set of $A$-points, for all $\bC$-algebras $A$, consists of all morphisms of $\bC$-schemes  $\Spec  A[t]/(t^{l+1})\to X$. The {\it arc space} $ \cL(X)$ is the $\bC$-scheme of infinite type whose $A$-points are the morphisms of $\bC$-schemes  $\Spec A\llbracket t\rrbracket\to X$. Truncation of arcs and jets gives a natural morphisms $$\pi_{l}: \cL(X)\to\cL_l(X),\quad
\pi_{l,l'}: \cL_l(X)\to \cL_{l'}(X),
$$  
for $l,l'\in\bN$ with $l\ge l'$. 

From now on we use the setup from the introduction: $X$ is smooth, $D$ an effective divisor, $\Sigma\neq\emptyset$ a closed subset of $D_{red}$. In this setting we have defined the $m$-contact locus $\sX (X,D,\Sigma)$ for  non-zero $m\in\bN$, see Definition \ref{defsX}. An alternative definition in terms of jet spaces of $D$ is
\be\label{eqClJs}
\sX _m(X,D,\Sigma) = \left [\pi_{ m-1}^{-1}(\cL_{m-1}(D))\setminus \pi_{m}^{-1}(\cL_m(D))\right ]
\cap \pi_{0}^{-1}(\Sigma). 
\ee
 If $D$ is reduced and smooth, then the image $\pi_m(\sX_m(X,D,\Sigma))$ in $\cL_m(X)$  is Zariski locally trival fibered by $\bC^{mn-m+1}\setminus\bC^{mn-m}$ over $\Sigma$, where $n=\dim X$. Hence  $\sX_m(X,D,\Sigma)=\pi_m^{-1}\pi_m(\sX_m(X,D,\Sigma))$ has codimension $m+\codim_D\Sigma$ and is irreducible if $\Sigma$ is.

 The main reason to use $m$-separating log resolutions in relation with contact loci is the next proposition. The version for restricted contact loci has appeared in \cite[Lemma 2.1, Lemma 2.6]{Fl}, see also \cite{ELM}. We use the notation introduced in  \ref{submN}. 

\begin{prop}\label{propCli}
Let $(X, D,\Sigma)$ be such that $X$ is a smooth complex algebraic variety, $D\neq 0$ is an effective integral divisor on $X$, and $\Sigma\neq\emptyset$ is a Zariski closed subset of the support of $D$. Let $m\in\bZ_{>0}$ and let $\mu:Y\to X$ be an $m$-separating log resolution of $(X,D,\Sigma)$. Then there is a disjoint union decomposition of the $m$-contact locus of $(X,D,\Sigma)$ as in (\ref{eqD}) such that, for all $l\in\bN$ large enough, $\sX_{m,i} $ is the inverse image under $\pi_{l}$ in the arc space $ \cL(X)$ of a non-empty smooth  irreducible Zariski locally closed subset $\sX^l_{m,i}$ of codimension $m\nu_i/N_i$ in the $l$-jet space $\cL_l(X)$. Moreover, $\sX^l_{m,i}$ is homotopy equivalent to the normal circle bundle of $E_i^\circ$.
\end{prop}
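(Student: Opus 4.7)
The plan is to use $\mu:Y\to X$ to lift arcs and transfer the computation from the simple normal crossings setting on $Y$ back to $X$ via the change-of-variables formula, then analyze the topology via a leading-term map into the normal bundle. First, for the decomposition (\ref{eqD}), I would take $\gamma\in\sX_m$ and use that $\mu$ is an isomorphism over $X\setminus D$ and $\gamma$ has finite order of contact with $D$, so that the generic point of $\gamma$ lies over $X\setminus D$. By the valuative criterion of properness applied to the projective morphism $\mu$, $\gamma$ lifts uniquely to $\tilde\gamma\in\cL(Y)$. If $\tilde\gamma(0)$ lay on two components $E_{i_1},E_{i_2}$, then setting $a_k=\ord_{\tilde\gamma}E_{i_k}\ge 1$ and using $\mu^*D=\sum_j N_j E_j$ would give $m=\sum_j a_j N_j\ge N_{i_1}+N_{i_2}>m$ by the $m$-separating condition — a contradiction; hence $\tilde\gamma(0)\in E_i^\circ$ for a unique $i$. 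Then $m=a_iN_i$ forces $N_i\mid m$, and $\gamma(0)\in\Sigma$ combined with the fact that in an $m$-separating resolution of $(X,D,\Sigma)$ one may assume each component of $\mu^{-1}(\Sigma)$ is some $E_j$ with $\mu(E_j)\subset\Sigma$ yields $i\in S_m$.

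Next, I would carry out the cylinder and codimension computation on $Y$ and transfer it to $X$. Let $\tilde\sX_{m,i}\subset\cL(Y)$ denote the locus of arcs centered on $E_i^\circ$ with $\ord_{\tilde\gamma}E_i=m/N_i$. In local SNC coordinates $(x_1,\ldots,x_n)$ around a point of $E_i^\circ$ with $E_i=\{x_1=0\}$, the defining condition is $\tilde\gamma^*x_1=at^{m/N_i}+O(t^{m/N_i+1})$ with $a\in\bC^*$, while the other coordinate functions are unconstrained and the other components do not appear. Hence $\tilde\sX_{m,i}$ is a cylinder: for $l\ge m/N_i$ it equals $\pi_l^{-1}(\tilde\sX_{m,i}^l)$ for a smooth, irreducible, locally closed $\tilde\sX_{m,i}^l\subset\cL_l(Y)$ of codimension $m/N_i$. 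Uniqueness of lifts provides a bijection $\mu_\infty:\tilde\sX_{m,i}\to\sX_{m,i}$, and along any such arc the relative Jacobian satisfies $\ord_{\tilde\gamma}K_{Y/X}=(\nu_i-1)\cdot m/N_i$. The change-of-variables formula of Denef--Loeser, in the cylinder form from \cite{ELM}, then yields that for $l\gg 0$, $\sX_{m,i}=\pi_l^{-1}(\sX_{m,i}^l)$ for a smooth, irreducible, locally closed $\sX_{m,i}^l\subset\cL_l(X)$ of codimension $m\nu_i/N_i$, with $\mu_l|_{\tilde\sX_{m,i}^l}$ a Zariski-locally trivial affine bundle of rank $(\nu_i-1)m/N_i$ onto $\sX_{m,i}^l$.

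Finally, for the homotopy type, I would construct the leading-term map $\tilde\sX_{m,i}^l\to\mathcal{N}_{E_i^\circ/Y}\setminus\{0\}$ sending an $l$-jet $\tilde\gamma$ to its center $\tilde\gamma(0)\in E_i^\circ$ together with the coefficient of $t^{m/N_i}$ in $\tilde\gamma^*x_1$, understood intrinsically as an element of $\mathcal{N}_{E_i/Y}|_{\tilde\gamma(0)}$ via its pairing with $\mathfrak{m}_{E_i,\tilde\gamma(0)}/\mathfrak{m}_{E_i,\tilde\gamma(0)}^2$. The local SNC description identifies this map as a Zariski-locally trivial affine bundle with fibers of dimension $nl-m/N_i$, hence a homotopy equivalence. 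Since the complement of the zero section of a line bundle deformation retracts onto its unit circle subbundle, $\tilde\sX_{m,i}^l$ (and hence $\sX_{m,i}^l$, via the affine bundle of the previous step) is homotopy equivalent to the normal circle bundle of $E_i^\circ$. The subtlest step will be the second one: establishing that the image $\sX_{m,i}^l$ is cylindrical and smooth of the expected codimension on the nose requires the precise jet-level fiber analysis of $\mu_\infty$ on constant-Jacobian-order strata from \cite{ELM}, rather than merely the bijective correspondence at the arc level.
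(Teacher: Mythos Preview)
Your proposal is correct and follows essentially the same approach as the paper. The paper's own proof is a brief sketch deferring to \cite[Lemmas 2.1 and 2.6]{Fl} and \cite{ELM}, noting only the minor differences from the restricted contact locus case (no unramified covers needed, an extra $\bC^*$-coordinate from the leading coefficient); your write-up is precisely the expanded version of that sketch, with the same three ingredients: the valuative-criterion lift plus $m$-separating inequality for the decomposition, the constant-Jacobian-order change of variables from \cite{ELM} for the cylinder structure and codimension, and the leading-term map into $\mathcal{N}_{E_i^\circ/Y}\setminus\{0\}$ for the homotopy type.
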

\begin{proof} The proof is similar to that of \cite[Lemma 2.1, Lemma 2.6]{Fl} for restricted contact loci. The differences in the proof are as follows. Unlike for restricted contact loci, one does not need to consider the finite unramified covers $\tilde E_i^\circ\to E_i^\circ$ from the proof of \cite[Lemma 2.6]{Fl}. Thus,  ``smooth variety" in the statement of \cite[Lemma 2.6]{Fl} means possibly finitely many disjoint copies of an irreducible smooth variety, while in our case we have irreducibility of $\sX^l_{m,i}$. Another difference is that the varieties $\sX^l_{m,i}$ have dimension one more than the their version for restricted contact loci. The extra coordinate takes any value in $\bC^*$, this being the coefficient of $t^{k_i}$ in the last line of the proof of \cite[Lemma 2.6]{Fl}. This identifies the analog of the morphism $\tilde{\pi}_0^l$ from that proof, up to homotopy, with the $\bC^*$-bundle on $E_i^\circ$ obtained by removing the zero section of the normal bundle.
\end{proof}

\noi {\bf Proof of Proposition \ref{corLct}.} It follows directly from the previous proposition.
$\hfill\Box$

\comment{

\sm

The following lemma is for clarification purposes and not used in the text:

\begin{lemma}\label{subInfi} With $(X,D,\Sigma)$ as in Proposition \ref{propCli}, there are infinitely many contact valuations for $(X,D,\Sigma)$. 
\end{lemma}
\begin{proof}
If not, then there exists $m_0\in\bZ_{>0}$ such that every contact valuation is a contact $m$-valuation for some $1\le m\le m_0$. For each $1\le m\le m_0$ consider an $m$-separating log resolution $\mu_m$ of $(X,D,\Sigma)$. Then all the contact valuations can be computed from the set $\mu_1,\ldots, \mu_{m_0}$. Take $p$ a prime number much bigger with respect to the finitely many orders of vanishing $N_i$ of $D$ along the finitely many prime divisors $E_i$ with  $i\in \cup_{m=1}^{m_0}{S_m}$, where $S_m$ is defined for $\mu_m$ as in Definition \ref{eqSm}. Then none of $\mu_1,\ldots, \mu_{m_0}$ is a $p$-separating log resolution. For a fixed $p$-separating log resolution, the set $S_p$ consists of  prime  divisors $E$ over $\Sigma$ with $N_E=p$, and at least one of these will be a contact $p$-valuation. This gives a contradiction.
\end{proof}

}

\subs{\bf Dlt valuations.} From now on we also assume that  $D$ is a reduced divisor on $X$.  If $\mu:Y\to X$ is an $m$-separating log resolution of $(X,D,\Sigma)$, we define as in the introduction $\Delta=\mu^*(D)_{red}=\sum_{i\in S}E_i.$ By Theorem \ref{thmEMM}, $(X,D)$ admits dlt modifications, and $(Y,\Delta)$ admits minimal models over $X$.

\begin{lemma}\label{lemdlv}
Any dlt valuation $v$ of $(X,D,\Sigma)$ is an $m$-valuation of $(X,D,\Sigma)$ for any $m$ divisible by $v(D)$. Moreover,  
the center of $v$ is a prime divisor on some minimal model of $(Y,\Delta)$ over $X$ for some  $m$-separating log resolution $\mu:Y\to X$ of $(X,D,\Sigma)$.
\end{lemma}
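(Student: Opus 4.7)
The valuation $v = \ord_E$ comes from a prime divisor $E$ on some dlt modification $\mu': Y' \to X$ of $(X, D)$ with $\mu'(E) \subset \Sigma$. Since $D$ is reduced, every component of $\Delta' = (\mu')_*^{-1}D + \Ex(\mu')$ has coefficient one; in particular $E$ lies in the non-klt locus of $(Y', \Delta')$. Also $\mu'$ is an isomorphism over $X \setminus D$, because its exceptional divisors lie over $D$. The assertions being local on $\Sigma$, we may assume $X$ is affine.

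The plan is to construct an $m$-separating log resolution $\mu: Y \to X$ that factors through $Y'$, in two stages. First, pick a log resolution $\phi_0: \tilde Y \to Y'$ of $(Y', \Delta')$ whose exceptional divisors lie over the klt locus of $(Y', \Delta')$; such a $\phi_0$ exists since $(Y', \Delta')$ is already snc outside that locus. Set $\tilde\Delta = (\phi_0)_*^{-1}\Delta' + \Ex(\phi_0)$; this equals $(\mu_0)_*^{-1}D + \Ex(\mu_0)$ for $\mu_0 := \mu' \circ \phi_0$. By Lemma \ref{lemDltm}, $(Y', \Delta')$ is a minimal model of $(\tilde Y, \tilde\Delta)$ over $X$. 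Second, iteratively blow up smooth centers to separate components of $\mu_0^{-1}(D)$ with small sum of multiplicities, following \cite[Lemma 2.9]{Fl}, obtaining $\pi: Y \to \tilde Y$ such that $\mu := \mu_0 \circ \pi$ is an $m$-separating log resolution of $(X, D, \Sigma)$. Let $\Delta = \mu_*^{-1}D + \Ex(\mu)$.

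Let $\tilde F = (\phi_0)_*^{-1}E \subset \tilde Y$ and $F = \pi_*^{-1}\tilde F \subset Y$ be the iterated strict transforms of $E$; these are prime divisors since $\phi_0$ is an isomorphism at the generic point of $E$. Then $F$ defines $v$, $\ord_F D = v(D)$ divides $m$, and $\mu(F) = \mu'(E) \subset \Sigma$, so the index of $F$ lies in $S_m$. This proves $v$ is an $m$-valuation. For the second assertion I will show that $F$ does not contract on minimal models of $(Y, \Delta)$ over $X$. Since $\tilde F$ maps to $E$ on the minimal model $(Y', \Delta')$ of $(\tilde Y, \tilde\Delta)$ over $X$, it does not contract there; by Proposition \ref{propBE}(a) it does not contract on any minimal model of $(\tilde Y, \tilde\Delta)$ over $X$. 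As $\pi_* F = \tilde F \neq 0$, Lemma \ref{lemNNew} applied to $\pi: Y \to \tilde Y$ then gives that $F$ does not contract on minimal models of $(Y, \Delta)$ over $X$. Such a minimal model exists by Theorem \ref{thmEMM}, and the image of $F$ on it is the prime divisor center of $v$.

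The main obstacle I anticipate is the tension between Lemma \ref{lemDltm}, which requires the resolution to be an isomorphism over the non-klt locus, and the $m$-separating condition, whose enforcing blow-ups typically meet that locus. The fix is to split the construction into two stages: apply Lemma \ref{lemDltm} only to $\phi_0$, and use Lemma \ref{lemNNew} to transfer the non-contracting property from $\tilde Y$ to the $m$-separating $Y$.
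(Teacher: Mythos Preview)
Your proof is correct and follows the same overall strategy as the paper: start from the dlt modification $(Y',\Delta')$, pass to a log resolution over it, and invoke Lemma~\ref{lemDltm}. The paper's argument is shorter---it asserts that an $m$-separating log resolution can be reached by blowing up only the klt locus $Z$ of $(Y',\Delta')$ and then applies Lemma~\ref{lemDltm} directly to $Y\to Y'$. Your two-stage construction is more careful on exactly the point you flag: the blow-ups needed to enforce $m$-separation at an snc stratum $E_i'\cap E_j'$ lying outside $Z$ produce exceptional divisors with log discrepancy $0$ with respect to $(Y',\Delta')$, so condition~(e) of Definition~\ref{defMM}(iii) is not literally satisfied for the full map $Y\to Y'$. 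By splitting into $\phi_0$ (to which Lemma~\ref{lemDltm} applies cleanly) and $\pi$ (handled by Lemma~\ref{lemNNew}), you sidestep this. In effect your argument merges the present lemma with the subsequent Lemma~\ref{lemdlE}, whose proof in the paper also goes through Lemma~\ref{lemNNew}; what you gain is a self-contained justification that the center of $v$ survives as a prime divisor on a minimal model of the $m$-separating $(Y,\Delta)$, without relying on the claim that $(Y',\Delta')$ itself is that minimal model.
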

\begin{proof} Suppose $v$ has center a prime divisor $E$ on some dlt modification $(Y',\Delta')$ of $(X,D)$ such that $E$ lies over $\Sigma$. Note that the dlt modification restricts to an isomorphism $Y'\setminus \Delta'\xa{\sim}X\setminus D$ by Proposition \ref{propFc}. Take $m$ divisible by $v(D)=\ord_ED$. By sufficiently blowing up the klt locus of $(Y',\Delta')$ we obtain an $m$-separating log resolution $\mu:Y\to X$, see  \cite[Proof of Lemma 2.9]{Fl}. Thus $v$ is an $m$-valuation. By Lemma \ref{lemDltm}, $(Y',\Delta')$ is a minimal model of $(Y,\Delta)$ over $X$ if $\Delta=(\mu^*D)_{red}$.
\end{proof}

\begin{lemma}\label{lemdlE} If $\mu:Y\to X$ is an $m$-separating log resolution of $(X,D,\Sigma)$ and $E_i\subset Y$ is the center of a dlt $m$-valuation of $(X,D,\Sigma)$, then $E_i$ does not get contracted on minimal models of $(Y,\Delta)$.
\end{lemma}
\begin{proof}
Suppose that $E_i$ is the birational strict transform of $F$, a prime divisor on another $m$-separating log resolution $\mu_1:Y_1\to X$ of $(X,D,\Sigma)$ such that $F$ does not get contracted on minimal models of $(Y_1,\Delta_1=\mu_1^*(D)_{red})$ over $X$. By performing further blowups of smooth centers we can find a log resolution $\mu_2:Y_2\to X$ which factors through both $\mu$ and $\mu_1$ and is an isomorphism over $X\setminus D$. By blowing up further we can assume that $\mu_2$ is $m$-separating, by \cite[Proof of Lemma 2.9]{Fl}. Then Lemma \ref{lemNNew} implies first that the strict transform of $F$ in $Y_2$ does not get contracted on minimal models of $(Y_2,\Delta_2=\mu_2^*(D)_{red})$, and then that $E_i$ does not get contracted on minimal models of $(Y,\Delta)$.
\end{proof}

\noi {\bf Proof of Theorem \ref{thmM}.}  Only the first inclusion in (\ref{eqInc}) needs to be proven.  We fix a dlt $m$-valuation of $(X,D,\Sigma)$. By Lemma \ref{lemdlE} it  is given by a divisor $E_i$ on some $m$-separating log resolution $Y$ for some
fixed $i\in S_m$, such that $E_i$ does not get contracted on any minimal model of $(Y,\Delta)$ over $X$, with $\Delta=\mu^*(D)_{red}=\sum_{k\in S}E_k$. We want to show that the Zariski closure
$
\ol{\sX _{m,i}}
$
is an irreducible component of $\sX _m$. Suppose that it is not. Then there exists $j\in S_m\setminus\{i\}$ such that
$
\sX _{m,i}\subset \ol{\sX _{m,j}}
$
and the latter is an irreducible component of $\sX _m$, by the decomposition (\ref{eqD}).  In particular, the codimension of $\sX _{m,i}$ is strictly bigger than the codimension of $\sX _{m,j}$. This is equivalent  to
\be\label{eqIn}
a(E_i,X,D){m}/{N_i} > a(E_j,X,D){m}/{N_j}
\ee
since $a(E_k,X,D)=\nu_k-N_k$ and ${\rm codim}\; \sX _{m,k}=m\nu_k/N_k$ 
 for all $k\in S_m$ by  Proposition \ref{propCli}. 

 We fix a minimal model $(Y',\Delta',\phi,\mu')$ of $(Y,\Delta)$ over $X$ as in (\ref{eqMM2}). We have that $E_i$ is not $\phi$-exceptional. From now we will denote by $E'_k$ the image $\phi_*E_k$ for $k\in S$, and by $S'=\{k\in S\mid E'_k\neq 0\}$. Thus $\Delta'=\phi_*\Delta=(\mu')^*(D)_{red}=\sum_{k\in S'}E'_k$.

 Pick an arc $\gamma\in \sX _{m,i}$. By assumption, $\gamma$ is a limit of arcs in $\sX _{m,j}$. 
Using the curve selection lemma, see \cite[Proof of Lemma 2.5]{Fl}, one can assume that there exists a complex analytic surface germ
$$
\al: (\bC^2,(0,0)) \to (X,\gamma(0)),\quad (t,s)\mapsto \al(t,s)
$$
such that $\al_0(t):=\al(t,0)=\gamma(t)$, and $\al_s(t):=\al(t,s)$ is an arc lifting through $E_j^\circ$ for all $s\neq 0$. By assumption, the induced meromorphic map $\tilde\al:\bC^2\dashrightarrow Y$ has 2-dimensional image and has indeterminacy. Since $E_i$ is not $\phi$-exceptional, the composition $\tilde\al'=\phi\circ\tilde\al:\bC^2\dashrightarrow Y'$ also has 2-dimensional image and has indeterminacy. Using a sequence of blowups, one resolves simultaneously the indeterminacy of $\tilde\al$ and $\tilde\al'$ to obtain a commutative diagram
$$
\xymatrix{
Z \ar[rd]_{\beta}  \ar[rrrd]^{\beta'} \ar[dd]_\sigma & & &\\
& (Y, \Delta) \ar[rd]^<<<<{\mu} \ar@{-->}[rr]^{\phi\quad} & &(Y', \phi_*\Delta) \ar[dl]^{\mu'}\\
(\bC^2,0) \ar@{-->}[ru]^{\tilde\al}  \ar@{-->}[rrru]_<<<<<<<<<<<<<<<{\tilde\al'} \ar[rr]_{\al}& & X .&
}
$$
Here we work with a small neighborhood of the origin in $\bC^2$.

We write $L_s$ for the line $\{(t,s)\mid t\in\bC\}$ in $\bC^2$. Denote by $\sigma^{-1}(0)=\cup_{a\in A}F_a$ the irreducible components of the exceptional divisor of $\sigma$. Then we have rationally equivalent cycles
$$
\sigma^*L_s=\tilde{L}_s\; (\text{for }s\neq 0)\quad \text{and}\quad \sigma^*L_0 = \tilde{L}_0+\sum_{a\in A}b_aF_a,
$$
for some integers $b_a> 0$ and $F_a$ $\sigma$-exceptional prime divisors, where $\tilde{L}_s$ denotes the strict transform.

Let $G$ be the strict transform under $\sigma$ of the $s$-axis in $\bC^2$. Then $\beta(G)\subset E_j$ since $E_j$ is proper and $\tilde\al_s(0)\in E_j^\circ$ for generic small $s \neq 0$.  The inverse image in $Z$  under $\beta$ of any subset of the support of $\Delta$ is contained in $(\cup_aF_a)\cup G$. This is because $\al^{-1}(D)$ is contained in the $s$-axis, shrinking to a smaller neighborhood of $0\in\bC^2$ if necessary, and $Z$ factors through $\bC^2\times_XY$. Similarly for $\beta'^{-1}$ of subsets of the support of $\phi_*\Delta$.

Consider the $\bQ$-divisor on $Y'$
$$
W :=\sum_{k\in S'} a(E_k',X,D)E'_k= \sum_{k\in S'} a(E_k,X,D)E'_k.
$$
Then $W$ is $\mu'$-exceptional and there is a $\bQ$-linear equivalence of $\bQ$-Cartier $\bQ$-divisors
$$
W = K_{Y'}+\Delta' -\mu'^*(K_X+D)
$$
by the definition of log discrepancy.
As above, $\beta'^*W$ is supported on $(\cup_aF_a)\cup G$. 
The intersection products 
\be\label{eqeq}\sigma^*L_s\cdot \beta'^*W = \sigma^*L_0\cdot \beta'^*W\ee
are well-defined and are equal for all small $s$, since the same is true for the intersection with the compact $F_a$, while for the non-compact $G$, $\sigma^*L_s\cdot G=1$ for all $s$ by the projection formula.
We will derive a contradiction from computing both sides. For the right-hand side we use the  nefness property of $K_{Y'}+\Delta'$. For the left-hand side we use the property that log discrepancies do not decrease on relative minimal models. 

The right-hand side is
$$
\sigma^*L_0\cdot \beta'^*W = \beta'_*\sigma^*L_0\cdot W = \beta'_*\tilde L_0 \cdot W + \sum_{a\in A}b_a\beta'_*F_a\cdot W
$$
Since $E_i$ is not $\phi$-exceptional, $\beta'_*\tilde L_0\cdot E_k' =0$ if $k\neq i$ and $\beta'_*\tilde L_0\cdot E_i' =\ord_{\gamma}(E_i)=m/N_i$. Thus $\beta'_*\tilde L_0 \cdot W=a(E_i,X,D){m}/{N_i}$.  Further, since $\beta'_*F_a$ is contained in the $\mu'$-exceptional locus of $Y'$, $\beta'_*F_a\cdot \mu'^*(K_X+D)=0$. Thus  
$\beta'_*F_a\cdot W=\beta'_*F_a\cdot (K_{Y'}+\Delta')$ and this is $\ge 0$ by the $\mu'$-nefness of $K_{Y'}+\Delta'$. Since $b_a\ge 0$, we have obtained that
\be\label{eqeq1}
\sigma^*L_0\cdot \beta'^*W \ge  a(E_i,X,D){m}/{N_i}.
\ee

On the other hand, for $s\neq 0$,
$$
\sigma^*L_s\cdot \beta'^*W = \tilde L_s\cdot \beta'^*W.
$$
Note that in general it is not necessarily true for divisors that $\beta^*\circ \phi^*=\beta'^*$ since $\phi$ is only a birational map and not a morphism, see \cite[1.20]{K}. However, $\phi$ is a morphism in codimension 1. Hence $\phi$  is a morphism on a neighborhood in $Y$ of a general point of $E_j$. We can assume the center of the arc $\tilde\al_s$ is such a general point of $E_j^\circ$. Then 
$$\tilde L_s\cdot \beta'^*W = \beta_*L_s\cdot\phi^*W.$$
 The only contribution to this product is from $E_j$ and its coefficient in $\phi^*W$. We can estimate it as follows. One has by the definition of log discrepancy that
$$
K_Y+\Delta = \phi^*(K_{Y'}+\Delta') + \sum_{k\in S\setminus S'} a(E_k,Y',\Delta') E_k
$$
where the sum is over the $\phi$-exceptional divisors in $Y$, and
$$
K_Y+\Delta  =\mu^*(K_X+D) +\sum_{k\in S}a(E_k,X,D)E_k.
$$
Note that on divisors $\phi^*\circ\mu'^*=\mu^*$ since  $\mu'$ is a morphism, see \cite[1.20.1]{K}, hence $\mu^*(K_X+D)=\phi^*\mu'^*(K_X+D)$.
Thus
$$
\phi^*W = \sum_{k\in S} (a(E_k,X,D) - a(E_k,Y',\Delta'))E_k,
$$
since $a(E_k,Y',\Delta')=0$ if $k\in S'$. Hence
$$
\tilde L_s\cdot \beta'^*W = (a(E_j,X,D) - a(E_j,Y',\Delta')){m}/{N_j}.
$$
By the definition of minimal models of $(Y,\Delta)$ over $X$, 
$
a(E_j,Y',\Delta')\ge a(E_j,Y,\Delta) =0.
$
We have thus obtained that
$$
\sigma^*L_s\cdot \beta'^*W\le a(E_j,X,D){m}/{N_j}.
$$
From (\ref{eqeq}) and (\ref{eqeq1}), this implies 
$
a(E_j,X,D){m}/{N_j} \ge a(E_i,X,D){m}/{N_i},
$
which contradicts (\ref{eqIn}). 
$\hfill\Box$

\medskip

We note that the hypothesis that $X$ is smooth was used in (\ref{eqIn}) and that $D$ is reduced was used in the analysis of $W$.

\subs{\bf Jump valuations.} Now we prove Theorem \ref{thmJV}. First recall the following terminology.

\begin{defn}\label{defMJ} Let $X$ be smooth complex algebraic variety and $D$ an effective divisor on $X$.
 Let $\mu:Y\to X$ be a log resolution of $(X,D)$. For $\lam\in \bQ_{>0}$, the {\it multiplier ideal sheaf of $(X,\lam D)$} is
$$
\cJ(X,\lam D) :=\mu_*\cO_{Y}(K_{Y/X}-\lfloor \lam\mu^*D\rfloor) \subset \cO_X,
$$
where $\lfloor\_\rfloor$ means taking the round-down of each coefficient in a $\bQ$-divisor, and $K_{Y/X}$ is the unique $\mu$-exceptional divisor linearly equivalent to $K_Y-\mu^*K_X$.  It is well-known that this definition does not depend on $\mu$.
A number $\lam\in \bQ_{>0}$ is called a {\it jumping number of $(X,D)$} if $\cJ(X,\lam D)\subsetneq \cJ(X,(\lam-\eps)D)$  for all $0<\eps\ll 1$. We say that a prime $\mu$-exceptional divisor $E_i$ on $Y$ {\it contributes with $\lam$ as a jumping number of $(X,D)$} if $\lam=(\nu_i+b)/N_i$ for some $b\in\bN$ and
$$
\cJ(X,\lam D)\subsetneq \mu_*\cO_Y(K_{Y/X}-\lfloor \lam\mu^*D\rfloor +E_i).
$$ 
This  depends on the valuation defined by $E_i$ and not on $\mu$, and implies that $\lam$ is a jumping number.
\end{defn}

\noi{\bf Proof of Theorem \ref{thmJV}.} {\it Step 1: Preparation.} We can assume that $E_i$ is $\mu$-exceptional since otherwise the claim is true. It is enough to show that $E_i$ does not get contracted on a minimal model of $(Y,\Delta)$ over $X$, where $\Delta=\mu_*^{-1}D + \Ex(\mu)$. Hence it is enough to show that $E_i$ does not get contracted after any step of the minimal model program for $(Y,\Delta)$ over $X$. Here by a step of the minimal model program we mean an extremal-ray contraction followed by a flip if necessary. Let $\mu':(Y',\Delta')\to X$ be obtained after a succession of steps, with $\Delta'=\phi_*\Delta={\mu'_*}^{-1}D+\Ex(\mu')$ and $\phi:Y \dashrightarrow Y'$ is the natural birational map. Assume that $(Y',\Delta')$ is the last step before $E_i$ gets contracted. Thus $E_i'=\phi_*(E_i)\neq 0$ contains an open dense subset covered by curves of numerical class $C$ such that $(K_{Y'}+\Delta')\cdot C< 0$. We will derive a contradiction.

We do not need $Y$ anymore, but we need some log resolution of $(X,D,\Sigma)$ which factors through $\mu'$ and on which the valuation defined by $E_i$ has a prime center, to compare contributions to jumping numbers. To avoid introducing new notation, by taking a common log resolution we can  assume that $\phi:Y\to Y'$ is a birational projective morphism.

{\it Step 2: Multiplier ideals are computed by $\mu'$.} Since $(Y',\Delta')$ is a log canonical pair, the multiplier ideals $\cJ(X,\lam D)$ are computed via $\mu'$, by a result of Smith-Tucker \cite[Theorem A.2]{BV}, that is, 
\be\label{eqTS}
\cJ(X,\lam D) =\mu'_*\cO_{Y'}(K_{Y'/X}-\lfloor \lam\mu'^*D\rfloor) 
\ee
for every $\lam \in\bR_{>0}$. Here $K_{Y'/X}$ is the unique exceptional divisor $\bQ$-linearly equivalent to $K_{Y'}-\mu'^*K_X$. It has  integral coefficients since $X$ is smooth.  We recall the proof  since we will need an intermediate step. Since there is an equality of divisors
$
\phi_*(K_{Y/X}-\lfloor \lam\mu^*D\rfloor)=K_{Y'/X}-\lfloor \lam\mu'^*D\rfloor,
$
there is an inclusion of sheaves
$
\phi_*\cO_Y(K_{Y/X}-\lfloor \lam\mu^*D\rfloor)\subset\cO_{Y'}(K_{Y'/X}-\lfloor \lam\mu'^*D\rfloor).
$
We will show 
\be\label{eqTSa}
\phi_*\cO_Y(K_{Y/X}-\lfloor \lam\mu^*D\rfloor) = \cO_{Y'}(K_{Y'/X}-\lfloor \lam\mu'^*D\rfloor).
\ee
Taking direct image under $\mu'$ we then get (\ref{eqTS}). 

Let $g$ be a section of  $\cO_{Y'}(K_{Y'/X}-\lfloor \lam\mu'^*D\rfloor)$, that is, 
\be\label{eqw1}
\Div(g) +K_{Y'/X}-\lfloor \lam\mu'^*D\rfloor\ge 0.
\ee
Pick $\eps\in\bQ_{>0}$ small enough such that $\Delta -\eps\mu^*(\lam D)\ge \{\mu^*(\lam D)\}$. Here  $\{\_\}$ means taking the fractional part of each coefficient. This also implies that $\Delta' -\eps\mu'^*(\lam D)\ge \{\mu'^*(\lam D)\}$ by applying $\phi_*$. Then
$\Delta -\eps\lfloor \mu^*(\lam D)\rfloor - (1+\eps)\{\mu^*(\lam D)\}\ge 0,
$ and so
\be\label{eqw3}
\Delta' -\eps\lfloor \mu'^*(\lam D)\rfloor - (1+\eps)\{\mu'^*(\lam D)\}\ge 0.
\ee
Adding (\ref{eqw1}) and (\ref{eqw3})  we get
$
\Div(g) +  K_{Y'/X}+\Delta' - (1+\eps)\mu'^*(\lam D)\ge 0.
$
The left-hand side is $\bQ$-Cartier, hence we can pull it back via $\phi$. We obtain
$$
\Div(g\circ\phi) -  \mu^*K_X  +\phi^*(K_{Y'}+\Delta')- (1+\eps)\mu^*(\lam D)\ge 0.
$$
Since $(Y',\Delta')$ has log canonical singularities, $K_{Y}+\Delta\ge \phi^*(K_{Y'}+\Delta')$, and hence
$$
\Div(g\circ\phi) -  \mu^*K_X  +K_{Y}+\Delta- (1+\eps)\mu^*(\lam D)\ge 0.
$$
Taking integral part it follows that
$
\Div(g\circ\phi) +  K_{Y/X}-\lfloor \mu^*(\lam D)\rfloor\ge 0,
$
that is, $g$ is a section of $\phi_*\cO_Y(K_{Y/X}-\lfloor \lam\mu^*D\rfloor) $. This proves (\ref{eqTSa}).

{\it Step 3: Contribution to jumping numbers can be read from $\mu'$.} Assume now that $E_i$ contributes with the jumping number $\lam$ of $(X,D)$. We show that $E_i'$ also does, that is,
\be\label{eqTS1}
\cJ(X,\lam D) \subsetneq \mu'_*\cO_{Y}(K_{Y'/X}-\lfloor \lam\mu'^*D\rfloor +E_i').
\ee
Since there is an equality of divisors
$
\phi_*(K_{Y/X}-\lfloor \lam\mu^*D\rfloor+E_i)=K_{Y'/X}-\lfloor \lam\mu'^*D\rfloor +E_i',
$
there is an inclusion of sheaves
$
\phi_*\cO_Y(K_{Y/X}-\lfloor \lam\mu^*D\rfloor+E_i)\subset\cO_{Y'}(K_{Y'/X}-\lfloor \lam\mu'^*D\rfloor+E_i').
$
Taking direct image under $\mu'$ we obtain
$
\mu_*\cO_Y(K_{Y/X}-\lfloor \lam\mu^*D\rfloor +E_i) \subset \mu'_*\cO_{Y'}(K_{Y'/X}-\lfloor \lam\mu'^*D\rfloor +E_i').
$
By assumption, the first sheaf strictly contains $\cJ(X,\lam D)$. Then (\ref{eqTS1}) follows.

{\it Step 4: Local vanishing for $\mu'$.} 
Since $\mu=\mu'\circ\phi$, the Grothendieck spectral sequence implies that there is a injective morphism of sheaves of $\cO_X$-modules
$$R^1\mu'_*(\phi_*\cO_Y(K_{Y/X}-\lfloor \lam\mu^*D\rfloor))\subset R^1\mu_* \cO_Y(K_{Y/X}-\lfloor \lam\mu^*D\rfloor).$$ 
By local vanishing \cite[Theorem 1.2.3]{KMM}, the last sheaf is $0$, hence the first sheaf is also $0$. By (\ref{eqTSa}) this means that
\be\label{eqLv}
R^1\mu'_*\cO_{Y'}(K_{Y'/X}-\lfloor \lam\mu'^*D\rfloor) =0.
\ee

{\it Step 5: Non-vanishing.}  There is a short exact sequence of sheaves of $\cO_{Y'}$-modules
$$
0\to \cO_{Y'}(K_{Y'/X}-\lfloor \lam\mu'^*D\rfloor) \to \cO_{Y'}(K_{Y'/X}-\lfloor \lam\mu'^*D\rfloor +E_i')\to \cQ\to 0
$$
for some sheaf $\cQ$. 
Applying $\mu_*'$ and using (\ref{eqTS}), (\ref{eqTS1}), and (\ref{eqLv}), we get  a short exact sequence of sheaves of $\cO_X$-modules
$$
0\to \cJ(X,\lam D) \to \mu'_*\cO_{Y'}(K_{Y'/X}-\lfloor \lam\mu'^*D\rfloor +E_i')\to \mu'_*\cQ \to 0
$$
such that the last term is non-zero. Assuming that $X$ is affine, which we can since the problem is local on $X$, this implies that 
$
H^0(Y',\cQ)\neq 0
$
and 
$$
H^0(Y',\cO_{Y'}(K_{Y'/X}-\lfloor \lam\mu'^*D\rfloor)) \subsetneq H^0(Y',\cO_{Y'}(K_{Y'/X}-\lfloor \lam\mu'^*D\rfloor +E_i')).
$$
Thus there exists a rational function $g$ on $Y'$ such that $$P=\Div(g)+K_{Y'/X}-\lfloor \lam\mu'^*D\rfloor+E_i'\ge 0$$ but $P-E_i'$ is not effective. Therefore $E_i'$ has coefficient $0$ in $P$, and $P$ is linearly equivalent to $K_{Y'/X}-\lfloor \lam\mu'^*D\rfloor+E_i'$.

{\it Step 6: The contradiction.} We have that $K_{Y'}+\Delta' $ is $\bQ$-linearly equivalent over $X$ to
$$
K_{Y'/X}-\mu'^*(\lam D) +\Delta' = (K_{Y'/X}-\lfloor \mu'^*(\lam D)\rfloor +E_i') + (\Delta' -E_i'-\{\mu'^*(\lam D)\}).
$$
 Note that $R=\Delta' -E_i'-\{\mu'^*(\lam D)\}$ is effective and its support does not contain $E_i'$. By Step 1, we have
$
0>C\cdot (K_{Y'}+\Delta')$.
Using the splitting from above and $\bQ$-factoriality of $Y'$, we get
$0>C\cdot (P+R)|_{E_i'},$
where $P$ is as in Step 5. Note that the restriction of $P+R$ to $E_i'$ is a well-defined effective $\bQ$-Cartier $\bQ$-divisor. But the curves in $E_i'$ in the numerical equivalence class $C$ cover an open subset of $E_i'$, hence $C\cdot (P+R)|_{E_i'}\ge 0$. This is a contradiction. 
$\hfill\Box$

\subs{\bf Proof of Theorem \ref{thmLcv}.}
Let $\mu:Y\to X$ be an $m$-separating log resolution of $(X,D,\Sigma)$. Let $\Delta=(\mu^*D)_{red}$. Since $(X,D)$ is log canonical, one can apply \cite[1.35]{K} with $c_i=1$. The conclusion is that $(Y,\Delta)$ has a minimal model over $X$ that contracts all  $\mu$-exceptional prime divisors $E_i$ with $\nu_i/N_i\neq 1$ and it contracts no other prime divisors.  This implies (i) and (ii). 

If $\Sigma=D$, then $\lct_m(X,D,\Sigma)=1$ for all $m\ge 1$ and (i) applies. Moreover, since $(X,D)$ is log canonical, then $D$ is log canonical, by \cite[7.3.2]{Ko}. Since $D$ is log canonical, the $m$-th jet scheme $\cL_m(D)$ is equidimensional for every $m$, by \cite[Theorem 1.3]{EM}. By definition, $\sX^m_m(X,D)$, the $m$-contact locus in $\cL_m(X)$, is a Zariski open subset of $\pi_{m,m-1}^{-1}(\cL_{m-1}(D))$, where $\pi_{m,m-1}:\cL_m(X)\to \cL_{m-1}(X)$ is the truncation morphism. Since the latter is a trivial fibration, $\sX^m_m(X,D)$ is also equidimensional. Since $\sX _m(X,D)=\pi_{ m}^{-1}(\sX_m^m(X,D))$, it follows that all irreducible components of $\sX _m(X,D)$ have the same codimension. This proves (iii). 

One knows that $D$ has rational singularities if and only if $(X,D)$ is dlt, by applying 
 \cite[(7.9), (11.1.1)]{Ko} to our setup, namely, $X$ is smooth and $D$ is reduced, and taking $Z=Sing(D)$ in Definition \ref{defDltY}. In this is the case, $\cL_m(D)$ is irreducible for all $m\ge 1$ by \cite[Theorem 0.1]{M1}, and thus $\sX _m(X,D)$ is also irreducible by the same argument as above. This proves (iv).
$\hfill\Box$

\comment{

\subs{\bf Proof of Proposition \ref{propGlct}.} Let $C_1, C_2$ be two irreducible sub-cylinders of $\sX _m$. Suppose that $C_1$ is strictly contained in the Zariski closure $\ol{C_2}$. Then $\codim\; C_1 >\codim\; C_2$ and $\ord_{C_1}B\ge \ord_{C_2}B$. Thus $\codim\; C_1 +\beta\ord_{C_1}B>\codim\; C_2 +\beta\ord_{C_2}B$. This implies both claims since
$$
\codim\; \sX_{m,i}  + \beta\cdot\ord_{\sX_{m,i} }B = m\cdot\frac{\nu_i+\beta b_i}{N_i}
$$
for all $i\in S_m$.
$\hfill\Box$

\subs{\bf Proof of Theorem \ref{thmLog}.} Since $\codim\; \Sigma\ge 2$, all $E_j$ with $j\in S_m$ are $\mu$-exceptional. Thus the minimum is well-defined. Consider $k\in\bN$ very large so that $-k\sum_jb_jE_j$ is an integral $\mu$-very ample divisor. We take $H$ to be a general element of the complete linear system $\mu^*\cL\otimes_{\cO_Y}\cO_Y(-k\sum_jb_jE_j)$ for some very ample line bundle $\cL$ on $X$, which exists since $X$ is quasi-projective.  Then we can apply Proposition \ref{propGlct} with $B=\mu(H)$ and $\beta=1$. We have $\mu^*(B)=H+\sum_jb_jE_j$ and $\mu$ is a log resolution of $D+B$. By taking $k$ large enough, we see that 
$$
\lct_m(X,D,\Sigma;B)=\frac{\nu_i+kb_i}{N_i}
$$
since $b_j>0$ for all $j\in S_m$. Hence $E_i$ gives a special $m$-valuation.
$\hfill\Box$

\begin{rmk}\label{rmkSpCt} A positive answer to Question \ref{queNP} {\color{hot} (a)} follows if one shows that every contact $m$-valuation, say given by $E_i$ with $i\in S_m$, satisfies: there exists a rational number $c\ge \nu_i$ such that  $$\mu_*\cO_Y(\sum_{j\in S_m}(\nu_j-1-\lfloor \lam N_j\rfloor)E_j) \subsetneq \mu_*\cO_Y(E_i+\sum_{j\in S_m}(\nu_j-1-\lfloor \lam N_j\rfloor)E_j)$$ with $\lam=c/N_i$. Indeed, in this case there exists $g\in\cO_X$ with $c=\nu_i+\ord_{E_i}g$ and $\lam=\min\{(\nu_j+\ord_{E_j}g)/N_j\mid j\in S_m\}$, hence $E_i$ is a special $m$-valuation.\end{rmk}

}

\section{Hyperplane arrangements}\label{secHA}

In this section we prove Theorem \ref{corArr}. We consider pairs $(X=\bC^n,D)$ where  $D$ is a non-zero  effective divisor supported on a union of hyperplanes. We call $D$ a {\it hyperplane (multi-)arrangement} if it is (maybe non-)reduced.  The {\it canonical log resolution} of $(X,D)$ is obtained by blowing up successively by increasing dimension the  strict transforms of edges of $D$. An {\it edge} is an intersection of some hyperplanes in the arrangement. 

\begin{rmk} A minimal (that is, factoring all others) log resolution exits.  In contrast, there is  no minimal $m$-separating log resolution for $(X,D)$, e.g. $D=\{xyz=0\}\subset\bC^3$ and $m=2$.
\end{rmk}

\begin{defn}
A log resolution of $(X,D)$ is {\it good} if it is obtained by successively blowing up, starting from the canonical log resolution, non-empty intersections of two distinct irreducible components of the strict transform of $D$. Such resolutions exist by \cite[Proof of Lemma 2.9]{Fl}. 
\end{defn}

The proof of Theorem \ref{corArr} rests on two  results. The first one was obtained in \cite[Proposition 7.4]{BT}, where the statement, but not the proof, needs ``good" to be added:

\begin{prop}\label{propTue}(\cite{BT})
If  $D$ is a hyperplane multi-arrangement in $X=\bC^n$, and $m\in\bZ_{>0}$, then  (\ref{eqD}) is the decomposition into connected components of the  $m$-contact locus $\mathscr{X}_m (X,D)$, for any good $m$-separating log resolution of $(X,D)$.\end{prop}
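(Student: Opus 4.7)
The aim is to prove each $\sX_{m,i}$ is both open and closed in $\sX_m$; since each is irreducible (hence connected) by Proposition \ref{propCli}, this identifies them as the connected components of $\sX_m$.

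By Proposition \ref{propCli}, for $l$ sufficiently large the decomposition (\ref{eqD}) descends to a disjoint decomposition $\sX_m^l = \bigsqcup_{i \in S_m} \sX_{m,i}^l$ of locally closed smooth irreducible subsets of $\cL_l(X)$, with $\sX_{m,i} = \pi_l^{-1}(\sX_{m,i}^l)$. So it suffices to show each $\sX_{m,i}^l$ is closed in $\sX_m^l$. I would introduce the ``center of lift'' morphism $c: \sX_m^l \to Y$, sending an $l$-jet $\gamma$ to $\tilde\gamma(0)$, where $\tilde\gamma$ is the unique lift to $Y$ of any arc prolonging $\gamma$ (unique since $\mu$ is projective birational and $\gamma(\eta) \notin D$). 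For $l$ sufficiently large, standard Denef--Loeser machinery guarantees that $c$ is a morphism of $\bC$-schemes, and by definition $c(\sX_{m,i}^l) \subset E_i^\circ$.

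The key observation, using the $m$-separating hypothesis, is that for $\gamma \in \sX_m^l$ the a priori only locally closed condition $c(\gamma) \in E_i^\circ$ is equivalent to the closed condition $c(\gamma) \in E_i$. Indeed, if $c(\gamma) \in E_i \cap E_j$ with $j \neq i$, then expanding $\ord_\gamma D = \sum_k N_k \ord_{\tilde\gamma}(E_k)$ gives $m = \ord_\gamma D \geq N_i + N_j > m$, a contradiction. Hence $\sX_{m,i}^l = c^{-1}(E_i) \cap \sX_m^l$ is closed in $\sX_m^l$; as the complement of the finite union of the other closed strata $\sX_{m,j}^l$, it is also open.

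\textbf{Main obstacle.} The principal technical input is the scheme-theoretic well-definedness of $c$ at the $l$-jet level, which relies on the Denef--Loeser transformation rule: on cylinders where $\ord_{\tilde\gamma}(K_{Y/X})$ is bounded the inverse of $\mu_\infty$ is a piecewise scheme morphism, and on each stratum $\sX_{m,i}^l$ this order equals the constant $\nu_i - 1$, so $c$ is piecewise a morphism. The goodness hypothesis on the log resolution helps ensure the cylindrical descriptions of the $\sX_{m,i}$ given by Proposition \ref{propCli} fit together uniformly on $\sX_m$, so that the construction of $c$ and the closedness argument can be carried out simultaneously on all strata.
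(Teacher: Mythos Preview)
Your argument has a genuine gap that cannot be repaired without using the hyperplane-arrangement hypothesis in an essential way. The crucial step is the claim that the center-of-lift map $c:\sX_m^l\to Y$ is a morphism of schemes on all of $\sX_m^l$. The Denef--Loeser transformation rule only gives you that $\mu_\infty^{-1}$ is a morphism on loci where $\ord_{\tilde\gamma}(K_{Y/X})$ is constant; but these loci are exactly the strata $\sX_{m,i}^l$ themselves (on $\sX_{m,i}^l$ the order is $\nu_i-1$, and this varies with $i$). So you only know $c$ is continuous on each piece separately, and ``$c^{-1}(E_i)$ is closed'' is precisely the statement you are trying to prove, not a consequence of continuity you already have. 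Your last paragraph acknowledges this but the appeal to ``goodness'' there is not an argument.

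In fact the argument as sketched uses neither the arrangement hypothesis nor the goodness hypothesis concretely, and if it worked it would show that for \emph{every} $m$-separating log resolution of \emph{every} pair the pieces $\sX_{m,i}$ are the connected components of $\sX_m$. This is false: for an irreducible plane curve germ, Theorem~\ref{thmNPCur} exhibits strata $\sX_{m,i}$ that lie in the closure of other strata $\sX_{m,i_0}$ (the ones inside a common $Z_j$). The actual proof in \cite{BT} exploits the linear-algebraic nature of arrangements: for a good resolution each $\sX_{m,i}$ can be described by explicit linear (in)equalities on arc coefficients coming from the edge lattice, and one checks directly that the open conditions cutting out $\sX_{m,i}$ inside $\sX_m$ are implied by the closed ones together with $\ord_\gamma D=m$. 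That combinatorial verification is where both ``hyperplane arrangement'' and ``good'' are actually used.
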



\begin{prop}\label{propDltArr} Let $(X=\bC^n,D)$ be a hyperplane arrangement. Let $\mu:Y\to X$ be a good log resolution. Let $\Delta=\mu_*^{-1}D+\Ex(\mu)$. Then $(Y,\Delta)$ is its own minimal model over $X$.
\end{prop}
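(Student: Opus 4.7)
I plan to show that $K_Y + \Delta$ is $\mu$-nef; combined with $(Y,\Delta)$ being dlt (automatic for the snc pair with reduced boundary), this is exactly the statement that $(Y,\Delta)$ is its own minimal model over $X$. I would factor $\mu$ as a sequence of smooth blowups $Y = Y_N \to Y_{N-1} \to \cdots \to Y_0 = X$ along smooth centers $Z_s \subset Y_s$, with $\Delta_s = (\mu_s^*D)_{\mathrm{red}}$, and use the standard discrepancy identity
\[
K_{Y_{s+1}} + \Delta_{s+1} = \pi_{s+1}^*(K_{Y_s} + \Delta_s) + (c_s - k_s)\,F_{s+1},
\]
where $F_{s+1}$ is the new exceptional divisor, $c_s = \codim_{Y_s} Z_s$, and $k_s$ is the number of components of $\Delta_s$ containing $Z_s$.

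The crucial combinatorial input is that $k_s \geq c_s$ at every stage of a good log resolution of a hyperplane arrangement. For canonical-stage blowups, $Z_s$ is the strict transform of an edge cut out by at least $c_s$ hyperplanes, and the strict transforms of all of these contain $Z_s$; for good-stage blowups, $c_s = 2$ and $k_s \geq 2$ by construction. I would then test $\mu$-nefness on $\mu$-exceptional curves via dlt adjunction: any such curve $C$ lies in some $\mu$-exceptional component $E \subset \Delta$, and $(K_Y + \Delta) \cdot C = (K_E + \Delta_E) \cdot C$, where $\Delta_E = (\Delta - E)|_E$ is the different. The component $E$ is a $\mathbb{P}^{c-1}$-bundle over the corresponding blown-up center $Z$; restricted to a fiber $\mathbb{P}^{c-1}$, $K_E + \Delta_E$ has degree $k - c \geq 0$, since the $k$ boundary components containing $Z$ each cut the fiber in a hyperplane, contributing $kH$, while $K_{\mathbb{P}^{c-1}} = -cH$. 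This settles the nefness on fiber curves.

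The main obstacle is the case of $\mu$-exceptional curves in $E$ that are not contained in a fiber; these project to $\mu_s$-exceptional curves in $Z$. I expect to handle them by induction on $\dim X$, after recognizing that $(E, \Delta_E, E \to Z)$ inherits the structure of a good log resolution of an induced hyperplane arrangement on $Z$: the restrictions to $E$ of the remaining strict transforms and earlier exceptional divisors organize into a projective-bundle analogue of such an arrangement on the lower-dimensional base. Making this structural recognition precise is the technical heart of the argument; once available, the induction terminates trivially for $\dim X = 1$. An alternative avoiding the induction is to invoke Proposition \ref{propBE}(c): it would suffice to construct, for each $\mu$-exceptional $E_i$, a section of $k(K_Y + \Delta)$ over affine $X$ not vanishing along $E_i$, by taking tensor powers of the logarithmic form $\tfrac{dx_1 \wedge \cdots \wedge dx_n}{\ell_1 \cdots \ell_r}$ and multiplying by polynomials whose valuations $v_j$ compensate exactly for the negative log discrepancies $N_j - \nu_j$, thereby showing $E_i \notin \mathbb{B}(K_Y + \Delta / X)$.
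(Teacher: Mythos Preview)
Your approach is genuinely different from the paper's. The paper does not attempt any inductive or curve-by-curve computation; instead it compactifies to $(\bar Y,\bar\Delta)$ over $\bar X=\mathbb{P}^n$ (adding the hyperplane at infinity to $D$), observes that a good log resolution realizes $\bar Y$ as the closure of $X\setminus D$ inside a smooth toric variety mapping properly to $\mathbb{P}^r$ (with $r$ the number of hyperplanes in $\bar D+H$), and then quotes Tevelev's results \cite[Thm.~1.4, Thm.~1.5]{Te} to conclude that $K_{\bar Y}+\bar\Delta$ is the pullback of a very ample divisor under a proper morphism. Nefness is then immediate. So the paper imports a structural theorem about tropical/wonderful compactifications and finishes in a few lines.

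Your combinatorial input $k_s\ge c_s$ is correct (for the canonical stage, argue at a general point of the edge $W$, where no earlier blowup has taken place; for the good stage it is $2\ge 2$), and the fiber-curve adjunction computation is fine. The genuine gap is exactly the one you flag: the non-fiber $\mu$-exceptional curves. The pair $(E,\Delta_E)\to Z$ is \emph{not} literally a good log resolution of a hyperplane arrangement in some $\mathbb{C}^{m}$; $E$ is a $\mathbb{P}^{c-1}$-bundle over a repeatedly blown-up linear subspace, and $\Delta_E$ mixes horizontal pieces (from strict transforms through $Z$) with vertical ones (from earlier exceptionals meeting $Z$). Making your induction run requires the recursive product structure of the De Concini--Procesi wonderful model, namely that each boundary divisor factors as a product of wonderful models of a localization and a restriction of the arrangement. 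That is true but nontrivial, and proving it is of comparable depth to what the paper cites from Tevelev. So your main route is sound in outline but, once completed, essentially redevelops the structural theory the paper invokes off the shelf.

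Your alternative via $\bB(K_Y+\Delta/X)$ is closer in spirit to the paper's endpoint (semiampleness), but as stated it is also incomplete: you would need, for each exceptional $E_i$, a polynomial $g$ with $v_j(g)\ge N_j-\nu_j$ for every $j$ and with equality at $j=i$, and solving this simultaneous valuation problem again forces you back into the arrangement combinatorics. The paper bypasses all of this by obtaining semiampleness directly from \cite{Te}.
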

\begin{proof} It is enough to show that the $K_Y+\Delta$ is $\mu$-nef. In fact, it is enough to prove that $K_{\bar Y}+\bar\Delta$ is nef, where $\bar\mu:\bar Y\to \bar X$ is a good log resolution of $(\bar X, \bar D+H)$, $\bar X=\bP^n=\bC^n\sqcup H$, $H$ is the hyperplane at infinity, $\bar D$ is the compactification of $D$ in $\bP^n$, and $\Delta$ is $(\bar\mu^{-1}(\bar D + H))_{red}$. Since $\bar\mu$ is a good log resolution, $\bar Y$ is the compactification of $X\setminus D$ in a toric variety that maps under a proper toric morphism to $\bP^r$, where $r$ is the number of hyperplanes in $\bar D+ H$, see \cite[\S 4]{Te}. Then, by \cite[Theorem 1.4]{Te} (and the 2nd to last paragraph of its proof) and \cite[Theorem 1.5]{Te}, $K_{\bar Y}+\bar \Delta$ is the pullback under a proper morphism of a very ample divisor. In particular, $K_{\bar Y}+\bar \Delta$ is nef.
\end{proof}

We note that the proof show moreover that $(Y,\Delta)$ is a good minimal model over $X$.

\medskip

\noi{\bf Proof of Theorem \ref{corArr}.}
Since the subsets $\sX _{m,i}$ are irreducible, Proposition \ref{propTue} implies that the set of contact $m$-valuations is the set of essential $m$-valuations, and both consist of the divisorial valuations corresponding to $E_i$ with $i\in S_m$ as in Definition \ref{eqSm} for any good $m$-separating log resolution.  By Proposition \ref{propDltArr}, every such $m$-valuation is a dlt valuation of $(X,D)$. 
$\hfill\Box$

\section{Examples with rational singularities}\label{secOth}

If a hypersurface $D$ in a smooth variety $X$ has rational singularities, there are no dlt valuations for $(X,D,D_{sing})$ by Theorem \ref{thmLcv}. We look at some examples.


\comment{

\subs{\bf Rational double point surface singularities.}\label{subsRDP} Let $O\in\bC^3$ be the origin and $(D,O)$ one of the rational double point singularities: 
\begin{center}
$A_n$ $(n\in\bN)$, $D_n$ $(n\in\bN, n\ge 4)$, $E_6$, $E_7$, $E_8$. 
\end{center}
In \cite{Mou-rat}  the irreducible components of the  spaces of jets of $D$ passing through $O$ are determined. Using the relation (\ref{eqClJs}) between jet spaces and contact loci, this determines the irreducible components of $\sX _m(\bC^3,D,O)$   $(m\in\bN)$. All of them have codimension $m+1$ for $m\ge N-1$, where $$N = n+1, 2n-2, 12, 18, 30,$$ respectively. Hence every contact $m$-valuation is a top contact $m$-valuation. This also implies that $\lct_N(\bC^3,D,O)={\codim\;\sX _N(\bC^3,D,O)}\cdot 1/{N}=1+{1}/{N}.$ This  coincides with the minimal exponent  $\al_D$, by using the identification of $\al_D$ in the isolated hypersurface case with the minimal spectral number of $f$ and consulting the table from \cite[p. 129]{Kul}.  Thus Question \ref{queAf} has a positive answer in this case.

}

\subs{\bf Hypersurfaces with isolated maximal-multiplicity rational singularities.}\label{subImlr} Let $D$ be a hypersurface in $\bC^n$ with at most rational singularities at the origin $O$, which is an isolated singularity, and such that   $D$ has multiplicity  $n-1$ at $O$. In this case it follows from the proof of \cite[Proposition 3.7]{BMS}, see also \cite{Mou-rat} for $n=3$,  that for $m\ge n-1$, $\sX_m (\bC^n,D,O)$ has codimension $m+1$, and all its irreducible components have the same codimension. Hence, for such $m$, all contact $m$-valuations are top contact $m$-valuations, and 
$
\lct_m(\bC^n,D,O)=1+{1}/{m}.
$

\subs{\bf Determinants of  generic square matrices.}\label{subDet} Let $f=\det (x_{ij})$, where $(x_{ij})_{1\le i,j\le n}$ is a matrix of independent variables. It defines a hypersurface $D$ in $M=\bC^{n^2}$ with rational singularities. The jet spaces of $D$ together with their stratifications in terms of orbits of a certain group action have been determined by Docampo \cite{Roi}. Using (\ref{eqClJs})  this allows to conclude that $\sX _m(M,D,D_{sing})$ is irreducible of codimension $m+2$ for $m\ge 1$. To prove this one notes that $D=D^{n-1}$, $D_{sing}=D^{n-2}$, where $D^k$ is the subvariety of matrices in $M$ with rank at most $k$. Then, in the notation from \cite{Roi},
$
\sX _m(M,D,D_{sing}) = \ol{\cC_\lam}\setminus \ol{\cC_{\mu}},
$
where: $\lam, \mu\in\ol{\Lambda}_n$ are the pre-partitions $(m-1,1,0,\ldots,0)$, $(m,1,0,\ldots,0)$, respectively; $\cC_\lam, \cC_\mu$ are the respective orbit closures of $ \cL(GL_n(\bC))^{\times 2}$ on $ \cL(M)$; and $\ol{(\_)}$ denotes the Zariski closure. Then \cite[Prop. 5.4]{Roi} gives the codimension. There are no dlt $m$-valuations by Theorem \ref{thmLcv}, and the only contact $m$-valuation is thus a top $m$-valuation. It also follows that
$
\lct_m(M,D,D_{sing}) = {\codim\;\sX _m(M,D,D_{sing})}\cdot 1/{m}= {(m+2)}/{m}.
$
\comment{
It is well-known that the $b$-function of $f$ is 
$b_f(s)=(s+1)(s+2)\ldots (s+n).$
Thus the minimal exponent of $f$ is
$
\al_f = 2 =\lct_2(M,D,D_{sing}).
$
Hence Question \ref{queAf} is true in this case.}


\subs{\bf Homogeneous polynomials with an isolated singularity.}\label{subIhp}
\label{ex2}  Let $X=\bC^n$, $n\ge 3$, and let $D$ be given by a homogeneous polynomial of degree $d$ with an isolated singularity at the origin $O$.  The blowup $\mu:Y\to X$ at $O$ is a minimal $d$-separating log resolution of $(X,D,O)$. Then $\Delta=(\mu^*D)_{red}=\tilde D +E$, where $E\simeq\bP^{n-1}$ is the  exceptional divisor and $\tilde D$ is the strict transform of $D$. Then $E$ gives the only essential  $d$-valuation of $(X,D,O)$. By (\ref{eqD}), this is also the only contact $d$-valuation of $(X,D,O)$. If $d<n$  there are no dlt $d$-valuations for $(X,D,O)$, and if $d\ge n$ then $E$ gives the only dlt $d$-valuation. Indeed, we have $K_Y+\Delta=\mu^*(K_X+D)+ (n-d)E$, where $a(E,X,D)=n-d$. If $d<n$ then $(X,D)$ is the relative minimal model  of $(Y,\Delta)$ by Proposition \ref{propFc}, and $\lct_d(X,D,O)={n}/{d}$.
 If $d\ge n$, then $\bB(K_Y+\Delta/X)=\bB((n-d)E/X)=\emptyset$, since $-E$ is ample over $X$. Thus $(Y,\Delta)$ is its own  minimal model over $X$ by Proposition \ref{propBE}.



\section{Curves on surfaces: essential and dlt valuations}\label{secCur}

Let $X$ be a smooth complex algebraic surface. We give the geometric characterization of the essential valuations and of the dlt valuations in terms of the resolution graph of $(X,D,\Sigma)$. 

\subs{\bf Essential valuations.}\label{subEss}

\begin{prop}\label{propMSL} Let $X$ be a smooth complex algebraic surface, $D$ a non-zero effective divisor on $X$, and $\Sigma\neq\emptyset$ a Zariski closed subset of the support of $D$. Let $m>0$ be an integer. Then a minimal (that is, factoring all other) $m$-separating log resolution of $(X,D,\Sigma)$ exists.
\end{prop}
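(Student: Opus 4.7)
The plan is to construct $\mu_m$ explicitly by starting from the minimal log resolution $\mu_0\colon Y_0\to X$ of $(X,D,\Sigma)$ and iteratively blowing up every intersection point $p=E_i\cap E_j$ ($i\neq j$) at which the separating condition $N_i+N_j>m$ fails. The existence of $\mu_0$ is classical for surfaces (by Zariski's theorem on birational morphisms of smooth surfaces together with Castelnuovo contractibility): one takes the minimal embedded resolution of $D$ and, if necessary, performs further blowups to turn $\mu_0^{-1}(\Sigma)$ into an snc divisor.

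For termination, note that blowing up a bad point $p=E_i\cap E_j$ introduces a single new exceptional divisor $E$ with $N_E=N_i+N_j$; the only new potentially bad intersections lie on $E$, namely $E\cap\tilde E_i$ and $E\cap\tilde E_j$, where the multiplicity sums are $N_E+N_i=2N_i+N_j$ and $N_E+N_j=N_i+2N_j$, each strictly larger than $N_i+N_j$. Since the multiplicities are positive integers and only sums $\leq m$ are relevant, an induction on the quantity $\sum_{p\ \text{bad}}(m+1-(\text{sum at }p))$ shows the procedure halts after finitely many steps. Call the resulting morphism $\mu_m\colon Y_m\to X$; by construction it is $m$-separating.

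The substance of the argument is minimality: I must show that any $m$-separating log resolution $\mu'\colon Y'\to X$ factors through $\mu_m$. By the minimality of $\mu_0$ among log resolutions, $\mu'$ factors through $\mu_0$ via a birational morphism $\pi'\colon Y'\to Y_0$ of smooth surfaces, which by Zariski's theorem is a composition of point blowups. I then argue inductively that $\mu'$ factors through each $Y_k$ in our tower. Suppose it factors through $Y_k$; let $p_{k+1}$ be the next point blown up, lying on divisors $E_i,E_j$ with $N_i+N_j\leq m$. In $Y'$ the strict transforms of $E_i$ and $E_j$ must be disjoint, so the sequence of point blowups realizing $Y'\to Y_k$ cannot be an isomorphism over $p_{k+1}$ (two smooth transverse curves meeting at a point cannot be separated without touching that point). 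Since blowups at disjoint points commute, I rearrange the sequence so that the first blowup is at $p_{k+1}$ itself, which exhibits a factorization of $\mu'$ through $Y_{k+1}=\mathrm{Bl}_{p_{k+1}}Y_k$, completing the induction.

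The main obstacle is this last rearrangement: checking carefully that the intermediate blowups preceding the first one over $p_{k+1}$ occur at points disjoint from the fiber over $p_{k+1}$ and thus commute with the blowup at $p_{k+1}$. Equivalently, one verifies that $(\pi')^{-1}\mathfrak{m}_{p_{k+1}}\cdot\mathcal O_{Y'}$ is an invertible sheaf and applies the universal property of blowup. Once this is established, the induction yields $Y'\to Y_m$, proving that $\mu_m$ factors through $\mu'$ and hence is the minimal $m$-separating log resolution.
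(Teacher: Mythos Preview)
Your proof is correct and follows essentially the same approach as the paper: construct the candidate by iteratively blowing up bad intersection points starting from the minimal log resolution, then prove minimality by showing that any other $m$-separating log resolution must factor through each successive blowup because the strict transforms of $E_i,E_j$ cannot still meet there. Your justification of the factorization step via the universal property of blowup (invertibility of $(\pi')^{-1}\mathfrak{m}_{p_{k+1}}\cdot\mathcal O_{Y'}$) is a slightly more explicit version of the paper's one-line appeal to the fact that a composition of point blowups with positive-dimensional fiber over $P$ factors through $\mathrm{Bl}_P$.
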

\begin{proof} 
Let $\mu:Y\to X$ be the minimal log resolution of $(X,D,\Sigma)$. If $E_i$ and $E_j$ denote two distinct irreducible components of $\mu^{-1}(\Supp(D))$ intersecting in a point $P$ and such that $N_i+N_j\le m$, then blowup the point $P$. The multiplicity of the new exceptional divisor is $N_i+N_j$. Since $N_i, N_j$ are strictly positive, $N_i+N_j>\max\{N_i,N_j\}$. We repeat this process until we obtain for the first time an $m$-separating log resolution $\mu_0:Y_0\to X$. To show that the result is a minimal $m$-separating log resolution, take $\mu_1:Y_1\to X$ another $m$-separating log resolution. Let $\pi:Y_1\to Y$ be the morphism through which $\mu_1$ factors. Consider $\pi^{-1}(P)$. This is necessarily a divisor in $Y_1$, connecting the strict transforms of $E_i$ and $E_j$, since $\mu_1$ is $m$-separating. Since $\pi$ is a succession of blowups at smooth points and since $\pi^{-1}(P)$ is an exceptional divisor, it follows that $\pi$ factors through the blowup of $P$. Repeating the argument, it follows that $\mu_1$ factors through $\mu_0$.
\end{proof}

\begin{nota} {\it (Resolution graphs.)} We denote by $\Gamma$ the resolution graph of $(X,D, \Sigma)$, see \cite{BK}. Recall that $\Gamma$ is the dual graph of the divisor $(\mu^*D)_{red}$, where $\mu$ is the minimal log resolution of $(X,D,\Sigma)$, together with certain data at each vertex which remembers the resolution process, and in particular, the orders of vanishing $N_i$ of each $E_i$ as in Definition \ref{eqSm}.

For $m\ge 1$ we denote by $\Gamma_m$ the  resolution graph obtained from the minimal $m$-separating log resolution of $(X,D,\Sigma)$. 

We say that a resolution graph {\it refines} another resolution graph if it is obtained only by inserting inductively vertices $E_k$ on edges $E_iE_j$ with $i\neq j$ and such that $N_k=N_i+N_j$.

 The {\it valence} of a vertex in a graph is the number of edges connected to the vertex. A {\it path} in a graph is a connected sequence of edges with distinct end points.  We say that {\it a vertex lies on a path} if it is any of the vertices on the path, end points included. A {\it simple path} is a path that does not pass twice through the same vertex.
\end{nota}

Proposition \ref{propMSL} implies the following elementary statement, see also Lemma \ref{blow-up-formula} below:

\begin{prop}\label{propDltCu}
With the assumptions as in Proposition \ref{propMSL}:

(i) $\Gamma=\Gamma_1$ and for $m>1$, $\Gamma_m$ is  obtained inductively from $\Gamma_{m-1}$ by inserting a vertex $E_k$ with multiplicity $N_k=N_i+N_j$ on each edge $E_iE_j$ for which $N_i+N_j\le m$. Thus $\Gamma_m$ refines $\Gamma$.

(ii) For each edge $E_iE_j$ of $\Gamma$, the set of vertices $E_k$  lying on the simple path $E_iE_j$ in $\Gamma_m$ for some $m\ge 1$ is in bijection with
$
\{(a,b)\in\bN^2\mid \gcd(a,b)=1\}.
$
For such a vertex $E_k$, $N_k=aN_i+bN_j$.

(iii) The essential $m$-valuations of $(X,D,\Sigma)$ are given by the subset $S_m$ (from Definition \ref{eqSm}) of vertices of $\Gamma_m$  for the the minimal $m$-separating log resolution of $(X,D,\Sigma)$, that is,  divisors $E_k$ lying over $\Sigma$ in the minimal $m$-separating log resolution of $(X,D,\Sigma)$ such that $N_k$ divides $m$. 
\end{prop}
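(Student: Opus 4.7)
The plan is to prove the three parts in sequence, using Proposition \ref{propMSL} as the main input. Part (i) follows directly from the canonical construction of the minimal $m$-separating log resolution; part (ii) is a Stern--Brocot mediant induction; part (iii) uses the minimality of $\Gamma_m$ to translate the essential-valuation condition into a condition on vertices of $\Gamma_m$.

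For (i), the first observation is that the minimal log resolution is already $1$-separating, since every pair of intersecting components satisfies $N_i+N_j\ge 2>1$, so $\Gamma=\Gamma_1$. For the inductive step, the $(m-1)$-separating property forces every intersecting pair in $\Gamma_{m-1}$ to satisfy $N_i+N_j\ge m$, so the canonical blow-up procedure of Proposition \ref{propMSL} acts only at intersections with $N_i+N_j=m$. A single blow-up of such a point inserts a new vertex $E_k$ with $N_k=N_i+N_j=m$ between $E_i$ and $E_j$, and the two new edges satisfy $N_i+N_k=N_i+m>m$ and $N_k+N_j=m+N_j>m$, so no further blow-ups are needed; this yields $\Gamma_m$.

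For (ii), the construction of $\Gamma_m$ from $\Gamma$ can be read off as the classical Stern--Brocot/Farey mediant process. Label the endpoints $E_i,E_j$ by the pairs $(1,0)$ and $(0,1)$; whenever the procedure from (i) inserts a vertex between two adjacent vertices labeled $(a_1,b_1)$ and $(a_2,b_2)$, label the new vertex by the mediant $(a_1+a_2,b_1+b_2)$. An induction on depth, combined with the additive rule for multiplicities from (i), gives $N_k=aN_i+bN_j$ for every labeled vertex. The standard Stern--Brocot fact that iterated mediants of $(1,0)$ and $(0,1)$ enumerate precisely the pairs $(a,b)\in\bN^2$ with $\gcd(a,b)=1$, each exactly once, then yields the claimed bijection.

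For (iii), let $\mu:Y\to X$ be the minimal $m$-separating log resolution. If $E_k\subset Y$ satisfies $k\in S_m$, then since every $m$-separating log resolution $\mu':Y'\to X$ factors through $\mu$, the strict transform of $E_k$ on $Y'$ is a prime divisor, and its image on $X$ equals $\mu(E_k)\subset\Sigma$; hence $E_k$ defines an essential $m$-valuation. Conversely, any essential $m$-valuation has by definition a prime divisor center on $\mu$ itself, which must coincide with some $E_k$ in $\Gamma_m$; the conditions $N_k\mid m$ and $\mu(E_k)\subset\Sigma$ are then exactly $k\in S_m$. The only substantive step is the Stern--Brocot bijection in (ii), which is classical, so once Proposition \ref{propMSL} is in hand the whole proof is routine.
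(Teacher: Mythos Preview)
Your proof is correct and follows the same approach as the paper, which in fact gives no proof at all: it simply states that Proposition~\ref{propMSL} implies this ``elementary statement'' and points to Lemma~\ref{blow-up-formula} for the Stern--Brocot bijection underlying part (ii). Your write-up fills in exactly the details the paper omits---the inductive passage from $\Gamma_{m-1}$ to $\Gamma_m$ via the $(m-1)$-separating bound $N_i+N_j\ge m$, the mediant labeling, and the use of minimality for (iii)---so there is nothing to correct.
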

If $(C,0)$ is a smooth plane curve germ, $\sX_m(\bC^2,C,0)=\ol{\sX_{m,i}}$ where $i\in S_m$ is the  vertex closest to the strict transform of $C$, since $\sX_m(\bC^2,C,0)$ is irreducible of codimension $m+1$.

\subs{\bf Dlt valuations.}\label{subsDltCurves} 

\begin{prop}\label{propDltCurves} Let $X$ be a smooth complex algebraic surface, $D$ a non-zero reduced effective divisor on $X$, and $\Sigma\neq\emptyset$ a Zariski closed subset of the support of $D$. Let $m>0$ be an integer. The dlt $m$-valuations of $(X,D,\Sigma)$ are given by the vertices $E_k\in S_m\subset \Gamma_m$ that lie on the refinement of a simple path $E_iE_j$ of $\Gamma$ connecting two different vertices $E_i$, $E_j$ which are strict transforms of irreducible components of $D$ or have valence $\ge 3$ in $\Gamma$. 
\end{prop}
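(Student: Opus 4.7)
By Lemma \ref{lemdlE} combined with Proposition \ref{propBE}(c), a vertex $E_k \in S_m$ gives a dlt $m$-valuation of $(X, D, \Sigma)$ if and only if $E_k$ is not contained in the stable base locus $\bB(K_{Y_m}+\Delta_m/X)$; equivalently, $E_k$ is not contracted on some minimal model of $(Y_m, \Delta_m)$ over $X$, where $\mu_m \colon Y_m \to X$ denotes the minimal $m$-separating log resolution and $\Delta_m = (\mu_m^*D)_{\mathrm{red}}$. The plan is to identify the set of non-contracted vertices by running the relative MMP on $(Y_m, \Delta_m)$ over $X$, which exists and terminates by Theorem \ref{thmEMM}, and to identify this MMP with a combinatorial pruning of the dual graph $\Gamma_m$.

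\paragraph{MMP as combinatorial pruning.} On the smooth surface $Y_m$ every component $E_k$ of $\Delta_m$ is a smooth rational curve, and adjunction yields
\[
(K_{Y_m}+\Delta_m)\cdot E_k \;=\; -2 + v_{\Gamma_m}(E_k),
\]
so the $\mu_m$-exceptional $(K+\Delta)$-negative curves are precisely the exceptional leaves of $\Gamma_m$. Contracting such a leaf preserves the dlt $\bQ$-factorial structure; if the contracted $E$ was a $(-n)$-curve with $n > 1$, its image is a cyclic quotient singularity on the adjacent $E'$, and by Koll\'ar's adjunction with different one has $(K_{Y'}+\Delta')\cdot E' = -2 + \deg\mathrm{Diff}_{E'}(\Delta'-E')$, where $\deg\mathrm{Diff}_{E'}(\Delta'-E')$ contributes $1$ for each transverse snc intersection with another component of $\Delta' - E'$, plus a strictly positive amount at any singular point of the ambient surface lying on $E'$. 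Consequently the implication $v_{\mathrm{current}}(E)\geq 2 \Rightarrow (K+\Delta)\cdot E \geq 0$ is preserved at every stage, and the MMP is faithfully modeled by iteratively removing exceptional leaves from the current dual graph until none remain.

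\paragraph{Identifying the surviving subgraph.} Since the refinement $\Gamma \rightsquigarrow \Gamma_m$ only inserts internal vertices, the leaves of $\Gamma_m$ coincide with those of $\Gamma$. The pruning process therefore peels off each dead-end subtree of $\Gamma$ together with all inserted vertices on the refinements of its edges. Conversely, a vertex of $\Gamma_m$ lying on a simple path between two distinct good vertices of $\Gamma$ (that is, strict transforms of irreducible components of $D$ or vertices of valence $\geq 3$ in $\Gamma$) always retains at least one neighbor toward each endpoint throughout the pruning, and hence is never exposed as a leaf. A standard tree argument then identifies the surviving subgraph of $\Gamma_m$ with the union of refinements of such simple paths; intersecting with $S_m$ yields the stated characterization.

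\paragraph{Main obstacle.} The principal technical point is controlling the intersection numbers $(K+\Delta) \cdot E$ at intermediate stages of the MMP when the surface may acquire cyclic quotient singularities, so that the identification of the MMP with the combinatorial pruning of $\Gamma_m$ remains valid. This is handled by adjunction with Koll\'ar's different for dlt surface pairs, whose singularity contributions are strictly positive and hence only reinforce the valence criterion for contractibility; once this is in place, the remaining argument is a transparent combinatorial statement about trees.
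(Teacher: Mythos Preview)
Your reduction to identifying the non-contracted components on the minimal model of $(Y_m,\Delta_m)$ over $X$ is correct, and the adjunction computation $(K_{Y_m}+\Delta_m)\cdot E_k=-2+v_{\Gamma_m}(E_k)$ on the initial smooth surface is fine. However, the assertion that the MMP is ``faithfully modeled by iteratively removing exceptional leaves from the current dual graph until none remain'' is false. You establish only the implication $v_{\mathrm{current}}(E)\geq 2 \Rightarrow (K+\Delta)\cdot E\geq 0$, which shows that every MMP step contracts a current leaf, but \emph{not} that every current exceptional leaf is $(K+\Delta)$-negative. The converse fails exactly at the vertices the proposition is trying to retain.

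Concretely, let $E$ satisfy $\beta_{\Delta_m}(E)\geq 3$, with $r\geq 2$ of its incident edges leading into dead-end twigs $T_1,\dots,T_r$ (chains of exceptional curves containing no good vertex) and one remaining edge toward a strict transform. After the MMP has peeled off $T_1,\dots,T_r$, the image of $E$ is a current leaf carrying $r$ cyclic quotient singularities, and by the very different formula you invoke,
\[
(K+\Delta)\cdot E \;=\; -2 + 1 + \sum_{i=1}^{r}\Bigl(1-\tfrac{1}{d(T_i)}\Bigr) \;=\; (r-1) - \sum_{i=1}^{r}\tfrac{1}{d(T_i)} \;\geq\; (r-1)-\tfrac{r}{2}\;\geq\;0,
\]
since $d(T_i)\geq 2$ for the minimal $m$-separating log resolution (no twig contracts to a smooth point, Proposition~\ref{propTwig}(d)). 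So the MMP halts at $E$, whereas your combinatorial pruning deletes it. For an irreducible $D$ (one Puiseux pair, say) your pruning would in fact erase every exceptional divisor, since the only anchor is $\tilde C$; but the proposition asserts that $E_{R_1}$ and the vertices between $E_{R_1}$ and $\tilde C$ give dlt valuations. Correspondingly, your ``standard tree argument'' identifying pruning-survivors with good-path vertices is also off: pure leaf-pruning leaves only the vertices on paths between strict transforms, a strictly smaller set than the vertices on paths between good vertices.

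This is precisely what the paper's twig machinery handles. Proposition~\ref{propTwig} shows $\phi=\alpha\circ\upsilon$ with $\upsilon=\mathrm{id}$ in the $m$-separating case, and that $\alpha$ contracts exactly the maximal admissible twigs of $\Delta_m$ over $D$; the survivors are then characterized by conditions (1)--(3) of Corollary~\ref{corSM}, where condition~(2), $\beta_\Delta(E)\geq 3$, is what rescues the rupture-type vertices your pruning would remove. Your approach can be repaired, but you must replace naive leaf-removal by a pruning that stops whenever the exposed leaf carries at least two singularity marks from previously contracted twigs; once you track this, you are essentially reproducing the proof of Proposition~\ref{propTwig}(c).
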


The rest of the section is dedicated to the proof. The proposition will follow from Corollary \ref{corSM}.

In the case of curves on smooth surfaces we have uniqueness of relative minimal models, see  \cite[Theorem 3.3 and Proposition 3.9]{Fu}:

\begin{prop}\label{propU2} Let $\mu:Y\to X$ be a projective birational morphism of surfaces, such that $Y$ is normal. Let $\Delta$ be a  boundary divisor on $Y$. Assume that $Y$ is $\bQ$-factorial, or $(Y,\Delta)$ is log canonical. Then an MMP for $(Y,\Delta)$ over $X$ terminates and gives a minimal model  $\phi:(Y,\Delta)\dashrightarrow (Y',\Delta')$ over $X$. Such minimal model is unique up to isomorphisms over $X$. Moreover, $\phi$ is a morphism and is a composition of divisorial contractions. 
\end{prop}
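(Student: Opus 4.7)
The plan is to execute a relative $(K_Y+\Delta)$-minimal model program for $\mu\colon Y\to X$ and use the fact that in dimension two every birational extremal contraction is divisorial, combined with the classical uniqueness of minimal models for surfaces.

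First, in the $\bQ$-factorial case, the relative cone theorem for a dlt/lc pair yields a decomposition of $\overline{NE}(Y/X)$ with countably many $(K_Y+\Delta)$-negative extremal rays, each spanned by a rational curve contracted by $\mu$. Since $\mu$ is birational and proper, no such ray can be of Mori fiber type over $X$, so each associated contraction $\psi\colon Y\to Y''$ is birational over $X$. In dimension two, any birational extremal contraction is divisorial: if an irreducible curve $C$ generating the ray were contracted to a point, then automatically $C^2<0$, $C$ is an actual prime divisor, and $\psi$ is its blow-down. After contraction, $Y''$ remains normal and $\bQ$-factorial (in dimension two, blowing down a curve preserves $\bQ$-factoriality), $K_{Y''}+\psi_*\Delta$ stays $\bQ$-Cartier, and the pair $(Y'',\psi_*\Delta)$ remains a boundary pair with the same log canonical/dlt character as $(Y,\Delta)$. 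The relative Picard number $\rho(Y/X)$ drops by exactly one at each step, so the program terminates after finitely many divisorial contractions, at some $\mu'$-nef pair $(Y',\Delta')$. This is the desired minimal model, reached by a composition of divisorial contractions.

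If instead $(Y,\Delta)$ is merely log canonical without assumed $\bQ$-factoriality, I would reduce to the previous case by passing to a small $\bQ$-factorialization $\tilde\mu\colon\tilde Y\to Y$, which exists for log canonical surface pairs. Running the relative MMP for $(\tilde Y,\tilde\Delta)$ over $X$ as above produces a minimal model, and since $\tilde\mu$ is an isomorphism in codimension one and extracts only divisors of log discrepancy zero, the output is also a minimal model of $(Y,\Delta)$ over $X$ (the intermediate divisors contracted by $\tilde\mu$ must be contracted during the MMP because they are $(K+\Delta)$-trivial and exceptional over $Y$).

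For uniqueness, given two minimal models $\phi_k\colon(Y,\Delta)\dashrightarrow(Y'_k,\Delta'_k)$, $k=1,2$, Proposition \ref{propBE}(a) shows that the induced birational map $Y'_1\dashrightarrow Y'_2$ is an isomorphism in codimension one and preserves log discrepancies of every divisor over $X$. In dimension two a birational map between normal proper surfaces over $X$ which is an isomorphism in codimension one is an isomorphism, since its indeterminacy locus has codimension $\ge 2$ on a surface, hence is a finite set of points, and applying the same argument to the inverse together with the negativity lemma forces the map to extend globally. The morphism statement for $\phi$ then follows immediately from the construction, as no flips ever occurred. The main obstacle I expect is the careful setup of the relative cone and contraction theorems for non-$\bQ$-factorial log canonical surface pairs: one must verify that a small $\bQ$-factorialization behaves well with respect to the MMP and that the minimal model produced downstairs inherits the required $\bQ$-Cartier nefness of $K_{Y'}+\Delta'$, rather than just nefness after pullback.
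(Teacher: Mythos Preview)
The paper does not give its own proof of this proposition; it simply cites \cite[Theorem 3.3 and Proposition 3.9]{Fu}. So your task was effectively to reconstruct Fujino's argument, and the $\bQ$-factorial case you sketch is essentially the standard one.

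There is, however, a genuine gap in your treatment of the non-$\bQ$-factorial log canonical case. You propose to pass to a small $\bQ$-factorialization $\tilde\mu\colon\tilde Y\to Y$. But in dimension two no such thing exists unless $Y$ is already $\bQ$-factorial: a proper birational morphism between normal surfaces is either an isomorphism or has exceptional locus of pure codimension one (a union of curves), so a \emph{small} birational morphism of normal surfaces is automatically an isomorphism. Thus the reduction you describe is vacuous, and the lc but non-$\bQ$-factorial case remains untreated. Fujino handles this case by developing the cone and contraction theorems directly for log canonical surface pairs (using vanishing theorems for lc pairs rather than $\bQ$-factoriality), which is exactly the ``careful setup'' you flag as an obstacle at the end; the point is that this setup cannot be bypassed by a $\bQ$-factorialization trick.

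Two smaller issues: your invocation of Proposition~\ref{propBE}(a) for uniqueness assumes $(Y,\Delta)$ is dlt, which is not given here; and in the $\bQ$-factorial case you invoke the cone theorem for a pair that is not assumed dlt or even lc, which again requires the surface-specific machinery of \cite{Fu} rather than the standard higher-dimensional statement. For uniqueness on surfaces it is cleaner to argue directly via the negativity lemma, avoiding Proposition~\ref{propBE} altogether.
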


After we introduce some prerequisites, we characterize which components of $\Delta$  do not get contracted on the relative minimal model in Proposition \ref{propTwig}.

\begin{defn}\label{defTw}	
Let $B$ be a reduced Weil divisor on a normal surface $Z$. 

(a) If $C$ is an irreducible component of $B$, we set $\beta_{B}(C):=C\cdot (B-C).$ 

(b) If $C$ is an irreducible component of $B$, we say that $C$ is a \emph{tip} of $B$ if $\beta_{B}(C)\leq 1$. We say that a tip $C$ is \emph{admissible} if $C\simeq \mathbb{P}^1$ and $C^2<0$. 

(c)
Let $T\neq 0$ be a subchain of $B$, with irreducible components $T_1,\dots, T_{n}$, ordered in such a way that $T_{i}\cdot T_{i+1}=1$, $T_{i}\cdot T_{j}=0$ for $|i-j|>1$. Then $T$  is called a \emph{twig} of $B$ if $\beta_{B}(T_{1})\leq 1$ and $\beta_{B}(T_{i})\leq 2$ for all $i\in \{2,\dots, n\}$.  Note that the ordering of the $T_i$ is unique if the twig $T$ is not a connected component of $B$, or if $T=T_1$ and $\beta_B(T)=0$; in the remaining case the reverse ordering is the only other possible ordering. A twig $T$ is \emph{admissible} if $T_{i}\simeq \mathbb{P}^{1}$ for all $i$, and the intersection matrix $(T_{i}\cdot T_{j})_{1\le i,j\le n}$ of $T$ is negative definite. In this case, $T_{i}$ is an admissible tip of $B-\sum_{j=1}^{i-1}T_{j}$, $i\in \{1,\dots n\}$. An (admissible) tip of $B$ is thus by definition an (admissible) twig of $B$, corresponding to the case $n=1$.

(d)  If $Z\to Z'$ is a birational morphism to another normal surface, and $B'$ is a reduced divisor on $Z'$, we say that $T$ is an (admissible) twig of $B$ \emph{over $B'$} if it is an (admissible) twig of $B$ and has no common irreducible component with the strict transform of $B'$.

(e) If $T\subset B$ is a subdivisor, $d(T)$ will denote the determinant of the negative of the intersection matrix of $T$, and we set $d(0)=1$.
\end{defn}

\begin{prop}\label{propCQ} Let $B$ be a reduced  divisor on a smooth surface $Z$. 

(a) If $T$ is an admissible twig of $B$  then $T$ can be contracted to a cyclic quotient singularity $Z'$.

(b) If in addition $T=\sum_{i=1}^n T_i$ as in Definition \ref{defTw}, then the log discrepancy of $T_i$ on  $(Z',B')$, where $B'$ is the divisorial image  of $B$, equals $ d(T-\sum_{j=1}^{i}T_{j})/d(T)$ if $T$ is not a connected component of $B$, and $(d(T-\sum_{j=i}^{n}T_{j})+d(T-\sum_{j=1}^{i}T_{j}))/d(T)$ otherwise.

\end{prop}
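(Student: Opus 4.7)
Part (a) is a classical consequence of contraction theory. Since $T$ is a connected curve with negative-definite intersection matrix on a smooth surface, Grauert's contractibility criterion (algebraized by Artin) produces a birational morphism $\pi \colon Z \to Z'$ contracting $T$ to a single normal point $p$, inducing $Z \setminus T \cong Z' \setminus \{p\}$. To identify $(Z',p)$, I would iteratively blow down any $(-1)$-curve appearing inside $T$; this preserves both the chain structure and negative-definiteness, and terminates either at the empty divisor (in which case $p$ is smooth) or at a chain of $\mathbb{P}^1$'s with all self-intersections $\leq -2$. The latter is the classical Hirzebruch-Jung string, which resolves a cyclic quotient singularity $\tfrac{1}{d}(1,q)$ with $d/q = [b_1, \ldots, b_{n'}]$.

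For (b), set $a_i := a(T_i, Z', B')$ with $B' := \pi_* B$. Since $T$ is $\pi$-exceptional, one has $\pi_*^{-1}B' + \Ex(\pi) = (B - T) + T = B$, and so by definition
\[
K_Z + B \;=\; \pi^*(K_{Z'} + B') \;+\; \sum_{i=1}^n a_i T_i.
\]
Intersecting with $T_j$ annihilates the $\pi^*$ term. Adjunction $K_Z \cdot T_j = -2 - T_j^2$ (valid since $T_j \cong \mathbb{P}^1$) together with $B \cdot T_j = T_j^2 + \beta_B(T_j)$ reduces the left-hand side to $\beta_B(T_j) - 2$, so the $a_i$ are the unique solution of $(-M)\mathbf{a} = \mathbf{v}$, where $-M$ is the positive-definite tridiagonal matrix with diagonal $(b_1,\dots,b_n)$, $b_i := -T_i^2$, and off-diagonal $-1$'s, and $v_j := 2 - \beta_B(T_j)$. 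The twig inequalities, combined with $T_j \cdot (B - T) \geq 0$, pin down the right-hand side: $v_j = 0$ for all interior indices $1 < j < n$; at the endpoints, $v_1 = v_n = 1$ when $T$ is a connected component of $B$ (so $\mathbf{v} = \mathbf{e}_1 + \mathbf{e}_n$, collapsing to $2\mathbf{e}_1$ when $n=1$), while $v_1 = 1$ and $v_n = 0$ otherwise (so $\mathbf{v} = \mathbf{e}_1$).

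The system is then inverted via the classical continuant identity: for $i \leq j$,
\[
\bigl((-M)^{-1}\bigr)_{ij} \;=\; \frac{d(T_1 + \cdots + T_{i-1}) \cdot d(T_{j+1} + \cdots + T_n)}{d(T)},
\]
which I would prove by Cramer's rule combined with the Laplace recursion $d(T_k + \cdots + T_n) = b_k \, d(T_{k+1} + \cdots + T_n) - d(T_{k+2} + \cdots + T_n)$ and the convention $d(0) = 1$; symmetry of $-M$ handles the case $i \geq j$. Pairing the $i$-th row of $(-M)^{-1}$ with $\mathbf{v} = \mathbf{e}_1$ yields $a_i = d(T_{i+1} + \cdots + T_n)/d(T) = d(T - \sum_{j=1}^i T_j)/d(T)$, which is the first formula. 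With $\mathbf{v} = \mathbf{e}_1 + \mathbf{e}_n$, an additional symmetric contribution $d(T_1 + \cdots + T_{i-1})/d(T) = d(T - \sum_{j=i}^n T_j)/d(T)$ appears, giving the second. The principal technical obstacle is the continuant bookkeeping and the consistency check at boundary cases, especially $n = 1$, where both formulas collapse to the direct computation $a_1 = (2-\beta_B(T_1))/b_1$.
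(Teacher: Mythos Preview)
Your proof is correct. For (a) you follow the same route as the paper: Grauert's criterion to contract, then successive blow-down of $(-1)$-curves in the chain to reach either a smooth point or a Hirzebruch--Jung string.

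For (b) the paper does not argue directly but simply cites \cite[3.1.10]{K-ab} and remarks that the three cases of Definition~\ref{defTw}(c) (twig attached at $T_n$, or connected component with either ordering) yield the stated expressions. Your approach is a self-contained replacement for that citation: you set up the linear system $(-M)\mathbf a=\mathbf v$ from $(K_Z+B)\cdot T_j=\beta_B(T_j)-2$, identify $\mathbf v$ as $\mathbf e_1$ or $\mathbf e_1+\mathbf e_n$ from the twig constraints, and invert via the continuant formula for symmetric tridiagonal matrices. This is almost certainly what the cited reference does as well, so the difference is one of exposition rather than method; the gain is that your argument is independent of \cite{K-ab} and makes the boundary cases ($n=1$, connected versus attached) explicit.
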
	
\begin{proof}
(a) This is Grauert's Contraction Theorem \cite[Theorem III.2.1]{BPV} together with the standard description of Hirzebruch-Jung singularities, see e.g. \cite[Theorem 3.32]{K} or  \cite[III. 2.(ii) and Theorem III.5.1]{BPV}. Note that this description is formulated for chains with no $(-1)$-curves, that is for minimal resolutions of cyclic singularities. Starting from an arbitrary negative definite chain of $\bP^1$'s, contract all $(-1)$-curves in this chain and its images. Then the image of the chain is either a smooth point, or a chain as in the cited theorems.

(b) This is proved in \cite[3.1.10]{K-ab}. One applies the formula in the cited theorem to the three possible cases concerning the ordering of the $T_i$ mentioned in Definition \ref{defTw}. For the two cases when $T$ is a connected component, the outcome reads the same.
\end{proof}

\begin{prop}\label{propTwig}  Let $X$ be a smooth complex algebraic surface and $D$ a non-zero reduced divisor on $X$. Let $\mu:Y\to X$ be a log resolution of $(X,D)$ that is an isomorphism over $X\setminus D$. Let $\Delta=(\mu^*D)_{red}$. Let  $\phi:(Y,\Delta)\to (Y',\Delta')$ be the  minimal model 
over $X$. 

(a) Then $\phi=\alpha\circ \upsilon$, where 

\begin{itemize}
\item $\upsilon$ is the successive contraction of those admissible twigs over $D$ of  $\Delta$, and of the  divisorial direct images of $\Delta$,   that can be contracted to smooth points, 
and 
\item $\alpha$ is the contraction of all maximal admissible twigs of $\upsilon_{*}\Delta$ over $D$.
\end{itemize}

(b) Let $\Upsilon=\Ex(\upsilon)$ be the exceptional locus of $\upsilon$.
Then  $\Upsilon$ is the sum of all maximal subtrees  $T$ of $\Delta-\mu^{-1}_{*}D$ such that $d(T)=1$ and $T$ meets $\Delta - T$ exactly once, in an irreducible component of $T$ of multiplicity one in the scheme theoretic inverse image $\upsilon^{-1}(\upsilon(T))$.

(c)  An irreducible component $E$ of $\Delta$ is not $\phi$-exceptional if and only if  $E\not\subseteq \Upsilon$ and $E$ satisfies one of the following:

\begin{enumerate} 
\item $E\subseteq \mu^{-1}_{*}D$, or 
\item $\beta_{\Delta-\Upsilon}(E)\geq 3$, or
\item $E$ is an irreducible component of a subchain $T$ of $\Delta-\Upsilon$ with $\beta_{{\Delta-\Upsilon}}(E)=2$, such that each tip of $T$ meets an irreducible component of $\Delta$ satisfying (1) or (2). 
\end{enumerate}

(d)  If $\mu$ is a composition of blowups at singular points of the respective reduced preimage of $D$, then $\upsilon=\mathrm{id}$ and $\Upsilon=0$.

\end{prop}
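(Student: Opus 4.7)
The plan is to run the relative $(K_Y+\Delta)$-MMP over $X$, using existence and uniqueness from Proposition~\ref{propU2}, and to analyze each divisorial contraction via adjunction on a surface. My first step would be to show that no strict transform $E \subseteq \mu_*^{-1}D$ is $\phi$-exceptional. Indeed, if $\phi$ contracted $E = \mu_*^{-1}D_i$ to a point $p \in Y'$, then since $\mu = \mu' \circ \phi$, the image $\mu'(p)$ would have to equal the curve $\mu(E) = D_i$, which is impossible. Hence every $\phi$-exceptional component is $\mu$-exceptional, in particular a smooth rational curve of negative self-intersection. Adjunction then gives
\[
(K_Y+\Delta) \cdot E = -2 + \beta_\Delta(E),
\]
so that $(K_Y+\Delta)\cdot E < 0$ forces $\beta_\Delta(E) \le 1$, i.e.\ $E$ is a tip of $\Delta$ over $D$. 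Iterating, the relative MMP will contract successively admissible twigs over $D$ from the tip inward.

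Next I would factor $\phi = \alpha \circ \upsilon$ using the Hirzebruch--Jung classification from Proposition~\ref{propCQ}(a): an admissible chain $T$ contracts to a smooth point iff $d(T) = 1$, and otherwise to a cyclic quotient singularity. Take $\upsilon$ to be the contraction of those admissible twigs of $\Delta$ over $D$ that collapse to smooth points, and $\alpha$ to be the subsequent contraction of the remaining maximal admissible twigs of $\upsilon_*\Delta$ over $D$. For (b), a maximal subtree $T \subseteq \Delta - \mu_*^{-1}D$ sits in $\Upsilon = \Ex(\upsilon)$ precisely when $d(T) = 1$ (so that the contraction is smooth) and $T$ meets its complement in exactly one component of multiplicity one in $\upsilon^{-1}(\upsilon(T))$ (so that the image attachment point is itself smooth). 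Termination is automatic: once all admissible twigs over $D$ are contracted, every remaining $\mu'$-exceptional component satisfies $\beta_{\Delta'}(E) \ge 2$, whence $(K_{Y'}+\Delta')\cdot E \ge 0$ by adjunction and $K_{Y'}+\Delta'$ is $\mu'$-nef.

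For (c), a component $E$ is not $\phi$-exceptional exactly when it survives both $\upsilon$ (so $E \not\subseteq \Upsilon$) and $\alpha$. Survival of $\alpha$ splits into three cases: $E \subseteq \mu_*^{-1}D$ is a strict transform, giving (1); $E$ is a branch point of $\Delta - \Upsilon$ with $\beta_{\Delta-\Upsilon}(E) \ge 3$, giving (2); or $E$ has $\beta_{\Delta-\Upsilon}(E) = 2$ and lies in a chain $T$ whose two tips both meet components of type (1) or (2), so that $T$ bridges surviving components and cannot be detached as a maximal twig over $D$, giving (3). Conversely, any $E \not\subseteq \Upsilon$ failing all of (1)--(3) lies in a maximal admissible twig of $\upsilon_*\Delta$ over $D$, hence is contracted by $\alpha$.

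Finally, for (d), I would show that if $\mu$ only blows up singular points of the reduced preimage, then each exceptional divisor is born with $\beta_\Delta \ge 2$ (the blown-up point lies either on at least two branches or on one branch with multiplicity $\ge 2$), and subsequent blowups at further singular points preserve $\beta \ge 2$ for every existing $\mu$-exceptional component, since blowing up at an intersection point merely exchanges one neighbor for another. Hence no $\mu$-exceptional component is a tip of $\Delta$, no MMP step is available, and $\upsilon = \mathrm{id}$, $\Upsilon = 0$. The main technical obstacle I expect is the bookkeeping in the factorization $\phi = \alpha \circ \upsilon$: matching Proposition~\ref{propCQ}'s combinatorics with condition (b) requires tracking multiplicities in $\upsilon^{-1}(\upsilon(T))$ under successive blowdowns and carefully using the continued-fraction description of $d(T)$ to identify precisely which subtrees collapse smoothly versus to cyclic quotients.
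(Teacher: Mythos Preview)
Your overall plan is sound and close to the paper's, but there is a genuine gap in the inductive step. The adjunction formula $(K_Y+\Delta)\cdot E = -2 + \beta_\Delta(E)$ that you invoke is valid only on the smooth surface $Y$. After the first extremal contraction of a tip with self-intersection $\leq -2$, the surface acquires a cyclic quotient singularity, and adjunction picks up a correction term: if $\bar A$ is a curve on the intermediate model $\bar Y$ and $T_1,\dots,T_n$ are the already-contracted twigs of $\Delta$ meeting $A=p_*^{-1}\bar A$, then
\[
\bar A\cdot(K_{\bar Y}+\bar B)=-2+2p_a(A)+\beta_B(A)+\sum_{i=1}^n\bigl(1-\tfrac{1}{d(T_i)}\bigr),
\]
the last sum coming from Proposition~\ref{propCQ}(b). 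The paper uses exactly this formula to deduce not only that $A$ is a tip of $B$, but also that $\sum_i(1-1/d(T_i))<1$, forcing all but one $d(T_i)$ to equal $1$. This is what makes the factorization $\phi=\alpha\circ\upsilon$ work: the twigs with $d=1$ are absorbed into $\upsilon$, and $A$ together with the single remaining twig $T_1$ becomes an admissible twig of $\upsilon_*\Delta$ handled by $\alpha$. Your sketch skips this analysis entirely.

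Relatedly, your termination claim ``once all admissible twigs over $D$ are contracted, every remaining $\mu'$-exceptional component satisfies $\beta_{\Delta'}(E)\geq 2$'' is false. Consider an exceptional curve $E$ with $\beta_\Delta(E)=3$, meeting a strict transform $A$ and two $(-2)$-tips $B,C$. After $\alpha$ contracts $B$ and $C$ to $A_1$-points, the image $E'$ has $\beta_{\Delta'}(E')=1$, yet $(K_{Y'}+\Delta')\cdot E' = -2+1+\tfrac12+\tfrac12=0$, so $E'$ survives on the minimal model. Nefness here comes from the singular correction terms, not from $\beta\geq 2$. You need to incorporate the log-discrepancy contribution from Proposition~\ref{propCQ}(b) both to run the induction on intermediate singular models and to verify nefness at the end; once you do this, the argument becomes essentially the paper's.
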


\begin{proof} Write $\phi=q\circ\sigma\circ p$, where $p\colon (Y,\Delta)\to (\bar{Y},\bar{B})$ is a contraction of some admissible twigs of $\Delta$ over $D$, $q$ is a birational morphism, and $\sigma$ is an extremal ray contraction obtained by contracting a prime divisor $\bar{A}$. Thus $\bar{A}^2<0$ and $\bar{A}\cdot (K_{\bar{Y}}+\bar{B})<0$. Write $a(p)=\sum_{E} a(E,\bar Y,\bar B) E,$ where the sum runs over all irreducible components $E$ of $\Ex(p)$, and $a(E,\bar Y,\bar B)$ is the log discrepancy. Put $B=p^{-1}_{*}\bar{B}$, $A=p^{-1}_{*}\bar{A}$. We have $0>\bar{A}^{2}=A\cdot p^{*}\bar{A}\geq A^{2}$ and $$0>\bar{A}\cdot (K_{\bar{Y}}+\bar{B})=A\cdot (K_{Y}+\Delta-a(p))=-2+2p_{a}(A)+\beta_{B}(A)+A\cdot (\Ex (p)-a(p)).$$ Note that $p_{a}(A)\geq 0$, and $\beta_{B}(A)\geq 1$ since the connected  component of $B$ containing $A$ also contains a component of $\mu^{-1}_{*}D$. Moreover, by Proposition \ref{propCQ} we have 
 $A\cdot (\Ex(p)-a(p))=\sum_{i=1}^{n}\left (1-{1}/{d(T_{i})}\right)\geq 0,$ where $T_1,\dots, T_n$ are the twigs of $\Delta$ meeting $A$ and contracted by $p$. Note that $T_i$ are also the connected components of $\Ex(p)$ meeting $A$.  It follows that $p_{a}(A)=0$ and $\beta_{B}(A)=1$, so $A$ is an admissible tip of $B$. Also $\sum_{i=1}^{n} (1-1/d(T_{i}))<1$. So we can assume that $d(T_{i})=1$ for $i>1$, that is,  $T_{2},\dots, T_{n}$ get contracted to smooth points. The image of $T_{1}+A$ after this contraction must then be an admissible twig of the image of $\Delta$.

It follows that $\alpha\circ\upsilon$ factors through $p\circ \sigma$. Applying this fact inductively, starting from $p=\mathrm{id}$, we see that $\alpha\circ\upsilon$ factors through $\phi$. Conversely, if $A$ is an irreducible component of $\Ex(\alpha\circ\upsilon)-\Ex(\phi)$, then reversing the above computation we see that $\bar{A}=\phi_{*}A$ satisfies $\bar{A}^{2}<0$, $\bar{A}\cdot (K_{Y'}+\Delta')<0$, which is impossible. Thus $\phi=\alpha\circ\upsilon$. This proves (a). Then (b) and (d) are easy consequences.

For (c), replace $(Y,\Delta)$ by $(\upsilon(Y),\upsilon_{*}\Delta)$, and assume $\upsilon=\mathrm{id}$. Let $R$ be the sum of all components of $\Delta$ satisfying (1)-(3). We need to prove that $\Delta-R=B$, where $B$ is the sum of all admissible twigs of $\Delta$ over $D$. Let $E$ be a component of $\Delta$. Clearly, if $E$ satisfies (1) or (2) then $E\subseteq \Delta-B$. Otherwise, let $T$ be a maximal subchain in the sense of inclusion of $\Delta$ containing $E$ whose irreducible components do not satisfy (1) or (2). Then either $T$ is as in (3), or one of the tips of $T$ is a tip of $\Delta$, and $T\subseteq \Delta-\mu^{-1}_{*}D$. Note that all irreducible components $C$ of $\Delta-\mu^{-1}_{*}D$ satisfy $C^2<0$, $C\simeq \mathbb{P}^1$. Since $\Delta-\mu^{-1}_{*}D$ is negative definite, $T$ is an admissible twig over $D$, so $E\subseteq B$.
\end{proof}

\begin{cor}\label{corSM}
Let $X$ be a smooth complex algebraic surface, $D$ a non-zero reduced divisor on $X$, and $\Sigma\neq\emptyset$ a Zariski closed subset of the support of $D$. Let $m>0$ be an integer. 

(a) Every dlt $m$-valuation of $(X,D,\Sigma)$ is an essential $m$-valuation. 

(b) Let $\mu:Y\to X$ be the minimal $m$-separating log resolution of $(X,D,\Sigma)$ and $\Delta=(\mu^*D)_{red}$. If a prime  divisor $E$ of $\mu$ corresponds to a dlt $m$-valuation of $(X,D,\Sigma)$, then $E$ does not get contracted on the minimal model of $(Y,\Delta)$ over $X$. The set of such prime divisors $E$ is given by the conditions that $\ord_E(D)$ divides $m$, $\mu(E)\subset \Sigma$, and
\begin{enumerate} 
\item $E\subset \mu_*^{-1}D$, or
\item $\beta_{\Delta}(E)\geq 3$, or
\item $E$ is an irreducible component of a subchain $T$ of $\Delta$ with $\beta_{{\Delta}}(E)=2$, such that each tip of $T$ meets an irreducible component of $\Delta$ satisfying (1) or (2). 
\end{enumerate}

\end{cor}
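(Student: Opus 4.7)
The plan is to combine Lemma~\ref{lemdlv} (a dlt $m$-valuation is an $m$-valuation), Lemma~\ref{lemdlE} (its center on the minimal $m$-separating log resolution $\mu\colon Y\to X$ is a prime divisor $E$ that does not contract on any minimal model of $(Y,\Delta)$ over $X$), and Proposition~\ref{propMSL} (every $m$-separating log resolution factors through the minimal one). The factorization yields that the center on every $m$-separating log resolution is the strict transform of $E$, hence a prime divisor, which is precisely the essential property.

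\textbf{Part (b), first step.} I would first establish the equivalence between ``$E$ corresponds to a dlt $m$-valuation'' and the conjunction ``$\ord_E(D)\mid m$, $\mu(E)\subset\Sigma$, and $E$ does not contract on the minimal model of $(Y,\Delta)$ over $X$''. Necessity of each condition is Lemma~\ref{lemdlv} together with Lemma~\ref{lemdlE}. For sufficiency, I would use that the minimal model $(Y',\Delta')$ is unique and produced by the MMP (Proposition~\ref{propU2}), is therefore dlt by Theorem~\ref{thmLocIso}(i), and since $\Delta'=\mu'^{-1}_*D+\Ex(\mu')$ is a dlt modification of $(X,D)$. A non-contracting $E$ descends to a prime divisor on $Y'$ and defines a dlt valuation of $(X,D,\Sigma)$; combined with $\ord_E(D)\mid m$, this produces a dlt $m$-valuation.

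\textbf{Part (b), second step and main obstacle.} It remains to translate ``does not contract'' into the combinatorial conditions (1)--(3), which is the role of Proposition~\ref{propTwig}. By the inductive construction in the proof of Proposition~\ref{propMSL}, the minimal $m$-separating log resolution is built from the minimal log resolution of $(X,D,\Sigma)$ by blowing up intersections of two distinct components of the reduced preimage of $D$---hence singular points of that reduced divisor. Proposition~\ref{propTwig}(d) then gives $\upsilon=\mathrm{id}$ and $\Upsilon=0$, so that Proposition~\ref{propTwig}(c) specializes exactly to the stated conditions (1)--(3) with $\beta=\beta_\Delta$. The main subtlety is the case when $\Sigma$ contains a smooth point of $D$: the SNC requirement on $\mu^{-1}(\Sigma)$ can then force a blowup at a smooth point, violating the hypothesis of Proposition~\ref{propTwig}(d). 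Here I would use Proposition~\ref{propTwig}(b) directly to show that any resulting component of $\Upsilon$ is a $(-1)$-curve meeting $\Delta-\Upsilon$ in at most one point, hence has $\beta_\Delta\leq 1$, is exceptional, and therefore fails each of (1)--(3)---so the characterization with $\beta_\Delta$ still captures precisely the non-contracting divisors.
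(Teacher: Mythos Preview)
Your strategy is the paper's, but two points need tightening.

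In Part~(a), Lemma~\ref{lemdlE} does not say the center of a dlt $m$-valuation on the minimal $m$-separating resolution $Y$ is a divisor; it \emph{assumes} the center is a divisor $E_i\subset Y$ and concludes $E_i$ does not contract. You still need to rule out that the center on $Y$ is a point, and this is exactly what the paper's proof supplies: take an $m$-separating $Y_1$ on which the center is a divisor $F$ (Lemma~\ref{lemdlv}), factor $\pi\colon Y_1\to Y$ (Proposition~\ref{propMSL}), reduce to the case where $\pi$ blows up only nodes of the reduced preimage of $D$ (divisors arising from smooth blowups do not survive on the minimal model, by Proposition~\ref{propTwig}), and then observe that a $\pi$-exceptional $F$ would map to a node $E_i\cap E_j$ of $\Delta$, forcing $\ord_F(D)\geq N_i+N_j>m$, which cannot divide $m$. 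Once the center on $Y$ is known to be divisorial, your factorization argument finishes~(a).

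In Part~(b), your reduction to Proposition~\ref{propTwig}(c),(d) is correct and matches the paper, and you rightly flag the edge case of an isolated smooth point $p\in\Sigma\cap D_{reg}$, which the paper passes over. But your description of $\Upsilon$ there is off: after the $m$-separating blowups the exceptional locus over $p$ is an entire chain, not a single $(-1)$-curve, and its interior components have $\beta_\Delta=2$, not $\leq 1$. The correct observation is that this chain has a tip $E_1$ (the first blowup at $p$) with $\beta_\Delta(E_1)=1$; hence for any subchain $T$ consisting of components with $\beta_\Delta=2$, the tip of $T$ nearest $E_1$ meets only $E_1$, which satisfies neither (1) nor (2), so condition~(3) fails for every component of the chain, and (1),(2) fail trivially. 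Since the only component outside $\Upsilon$ whose $\beta$ is affected by removing $\Upsilon$ is the adjacent strict-transform component, which already satisfies~(1), the $\beta_\Delta$-conditions in the Corollary still agree with those of Proposition~\ref{propTwig}(c), and your conclusion survives once this is corrected.
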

\begin{proof}
(a) Suppose that $\mu_1:Y_1\to X$ is another $m$-separating log resolution of $(X,D,\Sigma)$, and that $\mu_1=\mu\circ\pi$ for a morphism $\pi:Y_1\to Y$. Suppose $F$ is an irreducible component of $\Delta_1=(\mu_1^*D)_{red}$ such that $\mu_1(F)\subset\Sigma$, $\ord_F(D)$ divides $m$, and $F$ does not get contracted on the minimal model of $(Y_1,\Delta_1)$ over $X$. We can assume that $\pi$ is a composition of blowups of singular points of preimages of $D$, since blowing up smooth points of preimages of $D$ does not introduce exceptional divisors which survive on the relative minimal model by Proposition \ref{propTwig}. If $F$ is $\pi$-exceptional then there must exist two distinct irreducible components $E_1$ and $E_2$ of $\Delta$ such that $\pi(F)=E_1\cap E_2$. But this implies that $\ord_F(D)\ge N_1+N_2>m$ cannot divide $m$, which is a contradiction. Thus $F$ is not $\pi$-exceptional. Hence $F$ gives an essential $m$-valuation of $(X,D,\Sigma)$.

(b) If $E\subset \Delta$ corresponds to a dlt $m$-valuation of $(X,D,\Sigma)$, then $E=\pi(F)$ with $\pi$ and $F$ as above. By Proposition \ref{propTwig}, $F$ must satisfy one of the conditions (1)-(3) with $(\mu_1,Y_1,\Delta_1)$ replacing $(\mu, Y,\Delta)$. It is easy to see that this is equivalent to $E$ satisfying one of the conditions (1)-(3) for $(\mu, Y,\Delta)$.
Thus $E=\pi(F)$ does not get contracted on the minimal model of $(Y,\Delta)$ and the conditions (1)-(3) characterize such $E$. Alternatively, one also sees that $E$ does not get contracted on the minimal model of $(Y,\Delta)$ by applying the more general Lemma \ref{lemdlE}.
\end{proof}

\section{Curves on surfaces: contact valuations}\label{secCNP}	

We answer now the embedded Nash problem for irreducible formal plane curve germs.

	\subs{\bf  Notation.} \label{notation}
Let $(C,0) \subset (\bC^2,0)$ be an irreducible formal plane curve germ with $g\ge 1$ Puiseux pairs and multiplicity $\nu$. By finite determinacy, we can assume that $f$ is the germ of a polynomial. We take $x, y$ to be the coordinates on $\bC^2$.

	Let $m \geq 1$ be an integer.
	Let $\mu: Y \to \bC^2$ of $(C,0)$ be the minimal $m$-separating log resolution. It is obtained as a sequence of blow-ups of points, and we denote by $\mu_i: Y_i \to Y_{i-1}$ the $i$-th blow-up and by $E_i \subset Y_i$ its exceptional divisor. We set $Y_0:=\bC^2$. 	For every  divisor $E_i$ we have the corresponding divisorial valuation $v_i: \bC(x,y)^\times \to \Z $. Then $N_i=v_i(C)$ by definition.

	The resolution graph of $\mu$, denoted by $\Gamma_m$ as in Section \ref{secCur}, is as in Figure \ref{figr}. It is a refinement of the resolution graph $\Gamma$ of $C$. We denote as in the introduction the $g$ rupture components by $E_{R_1}, \ldots, E_{R_g}$, in the order of their appearance. For convenience, we set $C_0$ to be the $x$-axis and $E_{R_0}=E_0$ to be the $y$-axis.

		For a curve or arc $G$ in  $Y_{i'}$ we denote by $G^{(i)}$ the strict transform of $G$ in $Y_i$ with $i\ge i'\ge 0$. If it is clear from the context which ambient $Y_i$ is considered, we often write $G$ for $G^{(i)}$ in the case of exceptional divisors.

	For every $1\le j\le g$, let $(C_j,0)$ be the $j$-th approximate root of $(C,0)$, see \cite{PP}. It is a plane irreducible curve germ with $j$ Puiseux pairs whose resolution graph coincides with $\Gamma$ up to the $j$-th rupture component. In particular $C_j^{(R_j)}$ intersects $E_{R_j}$ in $Y_{R_j}$ transversely at the point of intersection of $E_{R_j}$ and $C^{(R_j)}$ for $0\le j\le g$.
	
	If $G, G'$ are two prime divisors in a smooth surface intersecting transversely at one point, the \emph{divisors  between $G$ and $G'$} are the (strict transforms of the) exceptional divisors that result from blowing up the point and, inductively, further intersection points of (the strict transforms of ) $G, G'$, and of previous exceptional divisors. The following is elementary and is part of the algorithm to compute resolution graphs, see \cite[p. 524]{BK}:

\begin{lemma} \label{blow-up-formula}
    Let $G, G'$ be two prime divisors intersecting transversely in one point $P$ in a smooth surface. Set $G = E_{(1,0)}, G' = E_{(0,1)}$, and then inductively set the divisor resulting from blowing up the intersection of $E_{(r,s)}$ and $E_{(r',s')}$ to be $E_{(r+r', s+s')}$. This establishes a bijection between pairs of coprime non-zero natural numbers $(r,s)$ and divisors between $G$ and $G'$.  Furthermore, if $x, y$ is a local system of coordinates at $P$ for which $G = \{y=0\}$ and $G' = \{x=0\}$, then the minimal composition of blow-ups that makes $E_{(r,s)}$ appear is given by
    $
    (x, y) = (\tilde{x}^s \tilde y^a, \tilde{x}^r \tilde y^b),
    $
    where $(a, b)\in\bN^2$  is the unique pair such that $ar - bs = (-1)^\ell, 0 \leq a \leq s, 0 \leq b \leq r$, where $\ell$ is the number of divisions in the Euclidean algorithm to compute  $\gcd(r, s)$. In these coordinates, $E_{(r,s)} = \{\tilde{x} = 0\}$ and $\{\tilde y=0\}$ is the strict transform of $E_{(b,a)}$.
\end{lemma}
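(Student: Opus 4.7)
The plan is to prove both statements by induction on $r+s$, which equals one more than the number of blow-ups required to produce $E_{(r,s)}$.

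\smallskip

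\noindent\emph{Bijection.} I would maintain inductively the stronger invariant that at every stage the ordered chain of divisors between $G$ and $G'$ has every pair of consecutive labels $(r_i,s_i),\;(r_{i+1},s_{i+1})$ satisfying the Farey-neighbor condition $r_is_{i+1}-r_{i+1}s_i=\pm 1$. This holds initially for $(1,0),(0,1)$ and is preserved by inserting the mediant $(r_i+r_{i+1},s_i+s_{i+1})$, which then also has coprime entries. The bijection follows from the classical Stern--Brocot description of coprime pairs in $\bN_{>0}^2$: iterating the mediant operation starting from $\{(1,0),(0,1)\}$ enumerates each coprime pair exactly once.

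\smallskip

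\noindent\emph{Coordinate formula.} For the base case $(r,s)=(1,1)$, the two affine charts of the blow-up of $P$ give $(x,y)=(\tilde{x},\tilde{x}\tilde{y})$ or $(\tilde{x}\tilde{y},\tilde{x})$, corresponding to $(a,b)=(0,1)$ or $(1,0)$ respectively; the Euclidean count $\ell$ for $\gcd(1,1)$ selects the sign of $ar-bs$ and hence the correct chart. For the inductive step, assume the formula for $E_{(r,s)}$: in the relevant chart $E_{(r,s)}=\{\tilde{x}=0\}$, $(x,y)=(\tilde{x}^s\tilde{y}^a,\tilde{x}^r\tilde{y}^b)$, and $\{\tilde{y}=0\}$ is the strict transform of the neighbor $E_{(b,a)}$. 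The origin of this chart is the intersection $E_{(r,s)}\cap E_{(b,a)}$; blowing it up in the affine chart $(\tilde{x},\tilde{y})=(\tilde{x}',\tilde{x}'\tilde{y}')$ yields
\[
x=(\tilde{x}')^{s+a}(\tilde{y}')^a,\qquad y=(\tilde{x}')^{r+b}(\tilde{y}')^b,
\]
which matches the formula for the new exceptional divisor $E_{(r+b,s+a)}$ with parameters $(a',b')=(a,b)$ and identifies $\{\tilde{y}'=0\}$ with the strict transform of $E_{(b',a')}=E_{(b,a)}$. The other chart, after relabelling so that the new exceptional divisor is $\{\tilde{x}''=0\}$, produces $(a',b')=(s,r)$ with $\{\tilde{y}''=0\}$ equal to the strict transform of $E_{(r,s)}=E_{(b',a')}$. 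Blow-ups at the intersection of $E_{(r,s)}$ with its other neighbor are handled symmetrically by starting from the chart in which $\{\tilde{y}=0\}$ corresponds to that other neighbor; the Farey invariant of the bijection ensures every divisor on the chain is reached.

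\smallskip

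\noindent\emph{Main obstacle.} The delicate point is matching the sign in $a'r'-b's'=(-1)^{\ell'}$ for $E_{(r',s')}=E_{(r+b,s+a)}$. A direct calculation gives $a'r'-b's'=\pm(ar-bs)$, with opposite signs in the two charts. I would resolve this by comparing the Euclidean algorithm for $\gcd(r+b,s+a)$ with that for $\gcd(r,s)$: one chart corresponds to the case where the first division step of the new algorithm merges with the ongoing quotient of the old one, preserving the parity of $\ell$, while the other chart introduces one fresh division and flips the parity. Combined with the uniqueness of $(a,b)$ in the range $0\le a\le s$, $0\le b\le r$ for a prescribed sign $(-1)^\ell$, this parity bookkeeping pins down the correct chart in each case and completes the induction.
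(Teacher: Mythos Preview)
Your bijection argument via the Stern--Brocot/Farey description is the same as the paper's. For the coordinate formula, however, the approaches diverge. The paper's (suppressed) proof does not induct one blow-up at a time; instead it encodes the entire minimal sequence creating $E_{(r,s)}$ as the product of the Euclidean matrices
\[
\begin{pmatrix} \mu_\ell & 1\\ 1 & 0\end{pmatrix}\cdots\begin{pmatrix} \mu_1 & 1\\ 1 & 0\end{pmatrix},
\]
where $\mu_1,\dots,\mu_\ell$ are the partial quotients of $r/s$. The rows of this product are exactly $(r,s)$ and $(b,a)$, and the same product simultaneously records the exponents of the monomial map. The sign $ar-bs=(-1)^\ell$ then drops out for free as the determinant of a product of $\ell$ matrices of determinant $-1$, so there is no parity bookkeeping at all.

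Your inductive route is viable, but two points need tightening. First, the induction hypothesis as you state it (``assume the formula for $E_{(r,s)}$'') hands you only the chart in which the neighbor $E_{(b,a)}$ with $ar-bs=(-1)^\ell$ is visible. To reach the \emph{other} Stern--Brocot child of $(r,s)$ you need the second chart of the last blow-up, where the other Farey neighbor $E_{(r-b,s-a)}$ sits on $\{\tilde y=0\}$; so the invariant you actually need to carry is both charts at once, not just the one singled out by the lemma. Second, your resolution of the ``main obstacle'' is imprecise: it is the \emph{last} partial quotient of $r/s$, not the first division step, that changes when passing to a child, and the dichotomy is between $[\mu_1,\dots,\mu_\ell]\mapsto[\mu_1,\dots,\mu_\ell+1]$ (same $\ell$, child via the lemma's neighbor) and $[\mu_1,\dots,\mu_\ell]\mapsto[\mu_1,\dots,\mu_\ell-1,2]$ (equivalently a new partial quotient, $\ell'=\ell+1$, child via the other neighbor). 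Once you carry both charts and make this continued-fraction bookkeeping explicit, your argument goes through; but note that the matrix approach avoids the issue entirely, which is precisely why the paper chose it.
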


\comment{

\begin{proof}
    The first statement concerning the correspondence is well known, see Stern-Brocot tree, keeping in mind that pairs of coprime numbers are in correspondence with non-negative rational numbers in reduced form.

    To prove the part about the monomial transformation, consider the sequence of Euclidean divisions that computes the greatest common divisor of $r$ and $s$: set $c_0 = r, c_1 = s$ and then
    \begin{align*}
        c_0 &= \mu_1 c_1 + c_2, \quad 0 < c_2 < c_1, \\
        c_1 &= \mu_2 c_2 + c_3, \quad 0 < c_3 < c_2, \\
        \ldots & \\
        c_{\ell-1} &= \mu_{\ell} c_{\ell}.
    \end{align*}
    Notice that $c_{\ell} = 1$ by assumption and we convene $c_{\ell+1} = 0$. This may be written in matrix form as
    \[
    \begin{pmatrix} c_{i-1} \\ c_i \end{pmatrix} =  \begin{pmatrix}
        \mu_i & 1 \\ 1 & 0
    \end{pmatrix} \begin{pmatrix} c_i \\ c_{i+1} \end{pmatrix}, \quad \text{for all } i = 1, \ldots, \ell.
    \]
    From here we can reconstruct which blow-ups were made to get to $E_{(r,s)}$. Indeed, notice that the effect of multiplying on the left by the matrix $\begin{pmatrix} \mu & 1 \\ 1 & 0 \end{pmatrix}$ is adding the first row $\mu$ times to the second one, and then swapping the rows. If we consider a matrix whose rows are the pairs of coprime numbers $(r,s)$ and $(r',s')$, the $\mu$ additions correspond to blowing up the intersection with $E_{(r',s')}$ $\mu$ times, and swapping the rows sets up the matrix for the next step, so that we can continue to blow up ``in the other direction''. This way, the top row of the matrix is always the pair of coprime numbers that corresponds to the divisor given by ``$x=0$'' in the current coordinates, while the bottom row is the pair of coprime numbers corresponding to the divisor towards which we are moving, given by ``$y=0$'' in those same coordinates. Composing the above equalities we get
    \[
    \begin{pmatrix} r \\ s \end{pmatrix} =  \begin{pmatrix}
        \mu_1 & 1 \\ 1 & 0
    \end{pmatrix} \cdots \begin{pmatrix}
        \mu_\ell & 1 \\ 1 & 0
    \end{pmatrix} \begin{pmatrix} 1 \\ 0 \end{pmatrix},
    \]
    which transposing shows
    \begin{equation} \label{blow-ups}
    \begin{pmatrix}
        r & s \\ b & a 
    \end{pmatrix} =
    \begin{pmatrix}
        \mu_\ell & 1 \\ 1 & 0
    \end{pmatrix} \cdots \begin{pmatrix}
        \mu_1 & 1 \\ 1 & 0
    \end{pmatrix} \begin{pmatrix}
        1 & 0 \\ 0 & 1
    \end{pmatrix}.
    \end{equation}
    The identity matrix on the right has been added to stress our point, as it represents the divisors $E_{(1,0)}$ and $E_{(0,1)}$. Multiplying by the first matrix on the right yields corresponds to blowing up the intersection with $E_{(1,0)}$ $\mu_1$ times, which takes us to $E_{(\mu_1, 1)}$. Then we blow up the intersection of $E_{(1,0)}$ and $E_{(\mu_1, 1)}$, and continue blowing the intersection with $E_{(\mu_1, 1)}$ $\mu_2$ times, getting to $E_{(\mu_1 \mu_2 + 1, \mu_2)}$. Iterating this process, equation \eqref{blow-ups} shows that we indeed end up at $E_{(r,s)}$. This is the only way to get to this divisor by the uniqueness in the Stern-Brocot tree. 

    Furthermore, we have gotten the values $b, a$ in the bottom row as a by-product, which taking determinants satisfy the equation
    \[
    ar - bs = (-1)^{\ell}.
    \]
    By construction $0 \leq a \leq s$ and $0 \leq b \leq r$, and Bezout's identity guarantees that there is exactly a pair of numbers $a, b$ satisfying these two properties.

    Finally, consider the equations of these monomial transformations. Denote by $x_\ell, y_\ell$ the final coordinates, then the last $\mu_{\ell}$ blow-ups correspond to a transformation $(\hat{x}_{\ell-1}, \hat{y}_{\ell-1}) = (x_\ell, x_\ell^{\mu_\ell} y_\ell)$. We swap the coordinates to be able to make a similar transformation in the next step, that is, $(x_{\ell-1}, y_{\ell-1}) = (\hat{x}_{\ell-1}, \hat{y}_{\ell-1})$. This way, we see that if we write the exponents in a matrix as
    \[
    \begin{pmatrix}
        \text{\parbox{5cm}{\centering exponent of $x$ \\ in the first component}} & \text{\parbox{5cm}{\centering exponent of $y$ \\ in the first component}} \\
        \phantom{|} & \phantom{|} \\
        \text{\parbox{5cm}{\centering exponent of $x$ \\ in the second component}} & \text{\parbox{5cm}{\centering exponent of $y$ \\ in the second component}}
    \end{pmatrix},
    \]
    the way to pass from the $(x_{\ell}, y_{\ell})$ coordinates to the $(x_{\ell-1}, y_{\ell-1})$ coordinates is multiplying on the left by the matrix $\begin{pmatrix} \mu_\ell & 1 \\ 1 & 0 \end{pmatrix}$. Doing this for $i = 1, \ldots, \ell$ gives the same matrix composition as before, and hence the desired result.
\end{proof}

}

	In terms of Figure \ref{figr}, in $\Gamma_m$, the divisors  between  $C_j^{(R_j)}$ and $E_{R_j}$ with $ 1\le j < g$ are those on the simple path joining $E_{R_j}$  with the bottom vertex of the $(j+1)$-st vertical group, excluding $E_{R_j}$. The divisors between  $C^{(R_g)}$ and $E_{R_g}$ are those to the right of $E_{R_g}$, excluding $E_{R_g}$ and $\tilde C$, and we call them the \emph{divisors after $E_{R_g}$}. The divisors between  $C_0$ and $E_{R_0}$, the original $x, y$ axes in our convention, are the first vertical group.

	We denote in this section the $m$-contact locus of $(\bC^2,C,0)$ by 
	$$\X_m(C) := \X_m(\bC^2,C,0) = \{\gamma \in \LL(\bC^2,0) \ |\ C \cdot \gamma = m\}.$$
The minimal $m$-separating resolution gives rise to the decomposition 
	$\X_m(C) = \sqcup_{i\in S_m} \X_{m,i}$ as in (\ref{eqD}).
	We  say that an arc $\gamma\in \X_m(C)$ \emph{lifts between $G$ and $G'$} if $G$ and $G'$ intersect transversally at a point on some $Y_i$ through which $\mu$ factors, and the lift of $\gamma$ to $Y$ intersects a divisor between $G$ and $G'$.

    \subs{\bf Decomposition of the contact loci.}
    \begin{theorem}\label{decomp} 
    For each $j=1,\ldots, g$, denote by $Z_j$ the subset of arcs in the contact locus $\X_m(C)$ which lift to one of the divisors in the $j$-th vertical group (including $E_{R_j}$ if it is an $m$-divisor) of $\Gamma_m$ as in Figure \ref{figr}. Denote by $S_m'$ the divisors in $S_m$ which are not in any vertical group of $\Gamma_m$. Then 
    \begin{equation} \label{decomposition}
    \X_m(C) = \bigsqcup_{j=1}^g Z_j \sqcup  \bigsqcup_{i\in S_m'} \X_{m,i}
    \end{equation}
    is a disjoint union decomposition into closed sets in the Zariski topology.
\end{theorem}

\begin{proof}
    It only remains to show that each of the sets in the decomposition, that is, $Z_j$ with $j \in \{1,\ldots,g\}$ and $\X_{m,i}$ with $i \in S'_m$, is closed in the Zariski topology. By \cite[Theorem A]{ELM} each of these subsets is a cylinder in $\LL(\C^2,0)$. Thus for $l \gg 0$, $Z^l_j \coloneqq \pi_l(Z_j)$ and $\X^l_{m,i} \coloneqq \pi_l(\X_{m,i})$ are constructible subsets in $\LL_l(\C^2,0)$. Moreover, $Z_j$ (resp. $\X_{m,i}$) is closed in $\LL(\C^2,0)$ if and only if $Z^l_j$ (resp. $\X^l_{m,i}$) is closed in $\LL_l(\C^2,0)$. Therefore it is enough to prove that each $Z^l_j$ and $\X^l_{m,i}$ is closed in the analytic topology of $\LL_l(\C^2,0)$.

    Arguing by contradiction, we suppose there are divisors $E_i$ and $E_{i'}$ such that $\X^l_{m,i}$ and $\X^l_{m,i'}$ are contained in different sets of the decomposition \eqref{decomposition} and $\X^l_{m,i} \cap \overline{\X^l_{m,i'}}\neq \varnothing$. Since $\X^l_{m,i}$ and $\X^l_{m,i'}$ are constructible, we may apply the Curve Selection Lemma  to obtain 
    a  $\bC$-analytic map germ $\alpha: (\C,0) \to \X^l_m$ such
    that $\alpha(0) \in \X^l_{m,i}$ and $\alpha(s) \in \X^l_{m,i'}$ for $s \neq 0$. Note that 
 since the $l$-jet space is finite-dimensional   
    the classical version of the Curve Selection Lemma is used here, not the more delicate version that appears often in the context of the infinite-dimensional arc spaces.  Using the zero section of the truncation map $\LL(\C^2,0) \to \LL_l(\C^2,0)$, we may regard $\alpha$ as a $\bC$-analytic map germ to $\LL(\C^2,0)$ such that for every $s$, the entries of the arc $\alpha(s)$ are all polynomials in $t$.

    Let $f$ be an equation for $C$. Note that $f(\alpha(s))$ is a $\bC$-analytic map germ of polynomials of constant order $m$. Therefore we can find a  $\bC$-analytic  map germ $u: (\C,0) \to \LL(\C,0)$ such that for every $s$, $u(s)$ is convergent and $f(\alpha(s)) = u(s)^m$. To see this, we can write $u(s)$ explicitly by picking a holomorphic $m$-th root of the coefficient of $t^m$ in $f(\alpha(s))$ and using the general formulas relating the coefficients of both sides of
    $
    (\sum_{k\ge 0}b_kt^k )^m= \sum_{k\ge 0}c_kt^k$ with $c_0\neq 0,
    $
 namely, $c_0=b_0^m$, $c_k=\frac{1}{b_0}\sum_{i=1}^k(im-k+i)b_ic_{k-i}$ for $k\ge 1$.
 This shows that $u$ is indeed analytic. 
    Also note that $\ord_t(u(s)) = 1$, so we can make the change of variables $(\tilde{s}, \tilde{t}) = (s, u(s)(t))$, and then $f(\alpha(\tilde{s}, t(\tilde{s},\tilde{t}))) = \tilde{t}^m$. In summary, after a reparametrization we can  suppose that for every $s$, $\alpha(s)$ is a convergent arc with $f(\alpha(s)) = t^m$ and such that $\alpha(0) \in \X^l_{m,i}$ and $\alpha(s) \in \X^l_{m,i'}$ for $s \neq 0$.

    We can assume that $\al(s)$ is defined for $s\in[0,1]$. For any non-constant arc $\gamma \in \LL(\C^2,0)$, denote by $\tilde{\gamma}$ the lift of $\gamma$ to the resolution. Let $p_0 \in E_i^{\circ}$ and $p_1 \in E_{i'}^{\circ}$ be the centers of $\tilde{\al}(0)$ and $\tilde{\al}(1)$ respectively, where $E_{k}^{\circ}\coloneqq E_{k}\setminus \bigcup_{k'\neq k} E_{k'}$. There exist coordinate charts $(U_0,\psi_0)=(z_1^{(0)},z_2^{(0)})$ and $(U_1,\psi_1)=(z_1^{(1)},z_2^{(1)})$ at $p_0$ and $p_1$, respectively, such that $\psi_0(p_0) = (0,0)$ and $f \circ \mu|_{U_0}(z_1^{(0)}, z_2^{(0)}) = (z_1^{(0)})^{N_i}$; and analogously for $(U_1,\psi_1)$. After possibly reducing the radius of convergence of the arcs in the family $\alpha$, we may assume that $U_0$ and $U_1$ contain the images of $\tilde{\al}(0)$ and $\tilde{\al}(1)$, respectively.

    We write $\tilde{\al}(0)$ in the local coordinates of $(U_0,\psi_0)$ as $\tilde{\al}(0)(t)=(a_1(t), a_2(t))$. Note that $a_1(t)^{N_i} = f(\alpha(0)(t)) = t^m$, and therefore $a_1(t) = t^{m/N_i}$, after possibly multiplying the coordinate $z_1^{(0)}$ by a root of unity, which does not affect the properties of $\psi_0$. Hence $(a_1(t), \lambda a_2(t))$ with $\lambda \in [0,1]$ defines a continuous 
    path in $\X_m$ joining $\tilde{\alpha}(0)$ and the arc which in the coordinates of $(U_0,\psi_0)$ is given by $(t^{m/N_i}, 0)$. After a similar construction for $\alpha(1)$ and pushing the paths to $\C^2$ via $\mu$, we can concatenate and obtain a path $\beta: [0,1] \to \X_m$ such that for every $s$, $\beta(s)$ is convergent and $f(\beta(s)) = t^m$, and in the local coordinates of $(U_0,\psi_0)$ and $(U_1,\psi_1)$ we have $\tilde{\beta}(0) = (t^{m/N_i},0)$ and $\tilde{\beta}(1)=(t^{m/N_{i'}},0)$ respectively. From now on we write $\beta(s,t)$ for $\beta(s)(t)$.
\smallskip 

    Let $0 < \delta \ll \varepsilon \ll 1$ be Milnor radii for $f$. Denote  by $B_\varepsilon$ the open ball in $\C^2$ centered at the origin of radius $\varepsilon$ and by $D_\delta$ the open disk in $\C$ centered at the origin of radius $\delta$. We pull-back the Milnor fibration to $Y$ via $\mu$, that is, we put $Y'\coloneqq \mu^{-1}(B_\varepsilon \cap f^{-1}(D_\delta))$ and consider the map $f \circ \mu\colon  Y'\to D_\delta$. Its restriction over $S^1_\delta$ is a locally trivial fibration isomorphic to the Milnor fibration (in the tube). 
    
    We now recall a construction of a topological model for this fibration, given by A'Campo in \cite[\S 2]{AC}. Let $\tau\colon Y_{\log}\to Y'$ be a real oriented blowup along $(f\circ \mu)^{-1}(0)$. Let $\eta\colon A\to Y_{\log}$ be a continuous map which is an identity over $\bigsqcup_{k}\tau^{-1}(E_{k}^{\circ})$, and for each pair $E_{k},E_{k'}$ it replaces $\tau^{-1}(E_{k}\cap E_{k'})$ by $\tau^{-1}(E_{k}\cap E_{k'})\times [0,1]$, see \cite[p.\ 239]{AC}. The resulting topological space $A$ is a manifold with boundary and corners, equipped with a continuous map $\pi_{A}\coloneqq\tau\circ \eta\colon A\to Y$ and a locally trivial fibration $f_{A}\colon \partial A\to S^{1}$ which is topologically equivalent to the Milnor fibration in the tube, i.e.\ admits a homeomorphism $h\colon \mu^{-1}(B_\varepsilon \cap f^{-1}(S^1_\delta))\to \partial A$ such that $f_{A}\circ h=\frac{1}{\delta}\cdot f\circ \mu$. Let $(\phi_\theta\colon \F_1 \to \F_{e^{2\pi \imath \theta}})_{\theta \in \R}$, where $\F_{z}\coloneqq f_{A}^{-1}(z)$, be the monodromy trivialization of $f_{A}$. We now list some useful properties of the decomposition $\partial A=\bigsqcup_{k} \pi_{A}^{-1}(E_{k}^{\circ})\sqcup\bigsqcup_{k,k'} \pi_{A}^{-1}(E_{k}\cap E_{k'})$, see Figure \ref{figr-milnor}.
    
    For each $k$, the preimage $\pi_{A}^{-1}(E_{k}^{\circ})$ is a product $E_{k}^{\circ}\times S^{1}$, and under this identification we have $(\pi_{A},f_{A})(x,z)=(x,z^{N_{k}})$, so the monodromy $\phi_{\theta}$ rotates the second coordinate by an angle $\frac{2\pi \theta}{N_{k}}$. We refer to $\pi_{A}^{-1}(E_{k}^{\circ})\cap \F_{1}$ as the \emph{piece of $\F_{1}$ over $E_{k}$}; in this piece we have $\phi_{N_{k}}=\mathrm{id}$. Note that if the vertex of $\Gamma_{m}$ corresponding to $E_{k}$ has valence $1$ (respectively, $2$) then each connected component of the piece of $\F_{1}$ over $E_{k}$ is a disk (respectively, a cylinder).
    
    Recall that we have distinguished a point $p_{0}\in E_{i}^{\circ}$ and a chart $\psi_{0}\colon U_0\to \C^2$ around $p_0$, with coordinates $(z_{1}^{(0)},z_{2}^{(0)})$, such that $f\circ \mu\circ \psi_{0}^{-1}=(z_{1}^{(0)})^{N_i}$. 
    Writing $z_{1}^{(0)}$ in polar coordinates yields a chart $\psi_{0}'\colon \pi_{A}^{-1}(U_{0})\to [0,\infty)\times S^1\times \C$ such that $\pi_{A}((\psi_{0}')^{-1}(r_{1},e^{2\pi \imath\, \theta_{1}},z_{2}^{(0)}))=\psi_{0}^{-1}(r_{1}\cdot e^{2\pi \imath\, \theta_{1}},z_{2}^{(0)})$. Since the homeomorphism $h$ can be defined by locally lifting the radial vector field on the real oriented blowup of the base $D_{\delta}$,  shrinking $U_{0}$ if needed we can assume that $h|_{U_0\cap f^{-1}(S^1_{\delta})}$ is a translation in the $r_{1}$-direction, i.e.\ $h(\psi_{0}^{-1}(\delta^{1/N_{i}}\cdot e^{2\pi \imath \cdot \theta_{1}},z_{2}^{(0)}))=(\psi_{0}')^{-1}(0, e^{2\pi\imath\cdot \theta_{1}},z_{2}^{(0)})$. As a consequence, 
    \begin{equation}
    	\label{compatibility}
    	\phi_\theta(h(\psi_0^{-1}(z_1^{(0)},z_2^{(0)}))) = h(\psi_0^{-1}(z_1^{(0)}e^{2 \pi \imath\, \theta/N_i},z_2^{(0)})),
    \end{equation}
    and an analogous formula holds for the chart $U_{1}$ around $p_{1}\in E_{i'}^{\circ}$.
    
    \begin{figure}[ht] 
    	\centering
    	\resizebox{.7\linewidth}{!}{%
    \fontsize{20pt}{20pt}\selectfont
    \def\svgheight{0.5cm}
    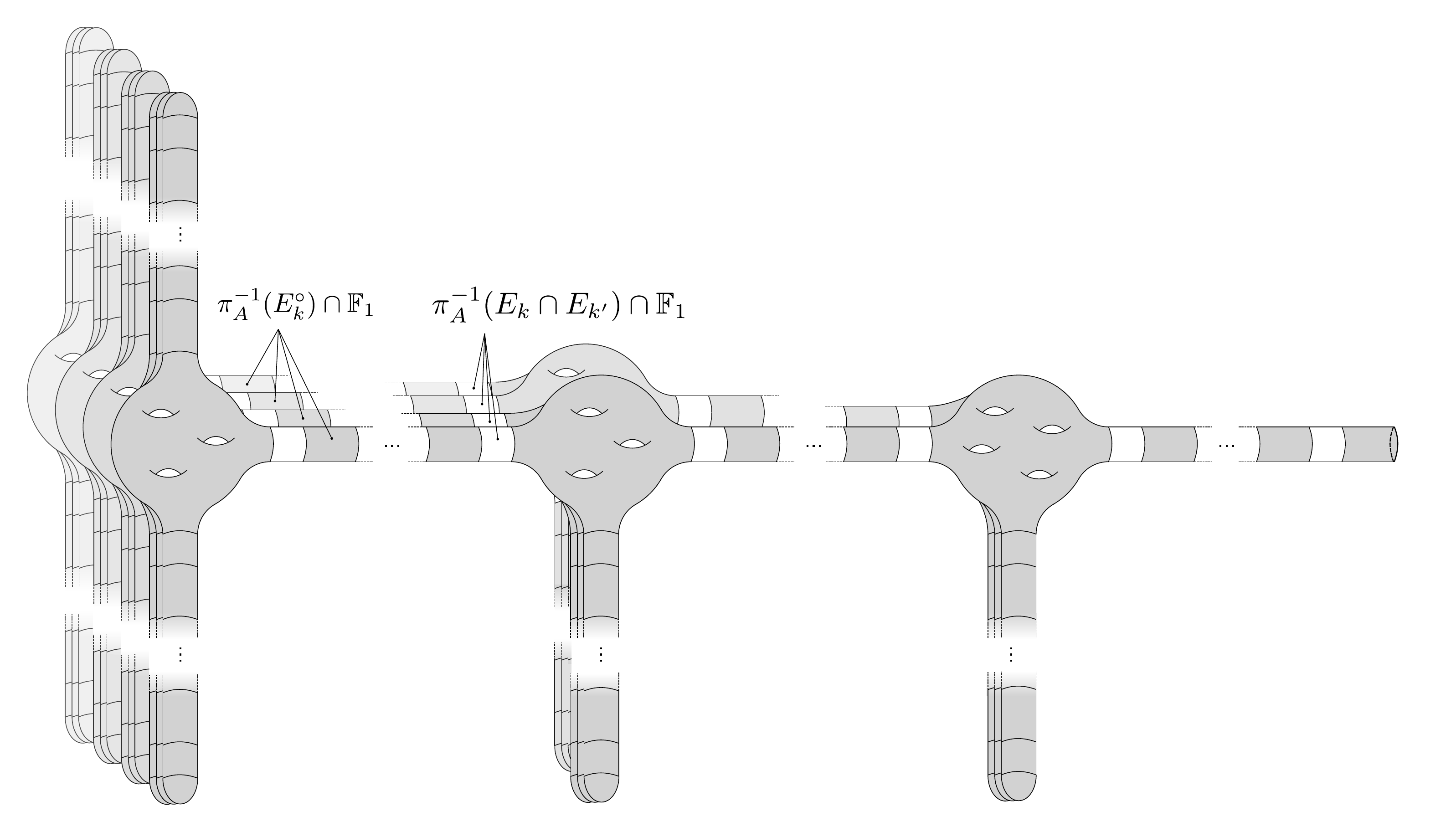
}
    	\caption{A'Campo decomposition of the Milnor fiber using the minimal $m$-separating log resolution, see \cite[\S 2]{AC}. Note the resemblance with Figure \ref{figr}.}
    	\label{figr-milnor}
    \end{figure}

    Consider now an intersection point $E_{k}\cap E_{k}'$. Each connected component of $\pi_{A}^{-1}(E_{k}\cap E_{k}')$ is a product $(S^1\times S^{1})\times [0,1]$. With an additive notation in each $S^1=\R/2\pi\Z$, the monodromy $\phi_{\theta}$ reads as $(\theta_{1},\theta_{2},\lambda)\mapsto (\theta_{1}+\lambda \cdot \frac{2\pi\theta}{N_{k}}, \theta_{2}+(1-\lambda)\cdot \frac{2\pi\theta}{N_{k'}}, \lambda)$, see  \cite[p.\ 240]{AC}. 
    We call $\F_{1}\cap \pi_{A}^{-1}(E_{k}\cap E_{k'})$ the \emph{piece of $\F_{1}$ between $E_{k}$ and $E_{k'}$}. In the above coordinates it is given by $\{N_{k}\theta_{1}+N_{k'}\theta_{2}=0\}$, so it is a disjoint union of cylinders $S^{1}\times [0,1]$. Putting $N=\operatorname{lcm}(N_k,N_{k'})$ we see that this piece is invariant under $\phi_{N}$, and $\phi_{N}$ restricts to a Dehn twist on each cylinder.

    Consider a homeomorphism of $\F_{1}$ defined as the above Dehn twist in $\F_{1}\cap \pi_{A}^{-1}(E_{k}\cap E_{k'})$ and as the identity elsewhere. It is isotopic to $\mathrm{id}_{\F_{1}}$ if and only if the image of $\pi_{1}(\F_{1}\cap \pi_{A}^{-1}(E_{k}\cap E_{k'}))$ in $\pi_{1}(\F_{1})$ is trivial. This happens if $E_k$ and $E_{k'}$ are in the same vertical group of $\Gamma_m$, see Figure \ref{figr}. Indeed, in this case each connected component of $\pi_{A}^{-1}(E_{k}\cap E_{k'})$ lies in a disk obtained as follows, see Figure \ref{figr-milnor}: to a disk over a divisor corresponding to a vertex of valence one we attach, along the boundary, a sequence of cylinders lying either between divisors, or over ones corresponding to vertices of valence $2$.  
    Conversely, if $E_k$ and $E_{k'}$ are not both contained in the same vertical group of $\Gamma_m$, then the image of $\pi_{1}(\F_{1}\cap \pi_{A}^{-1}(E_{k}\cap E_{k'}))$ in $\pi_{1}(\F_{1})$ is non-trivial, so the above Dehn twist 
    is not isotopic to the identity.
\smallskip
   

   We now return to studying our path $\beta\colon [0,1] \to \X_m$. We can assume $\beta(s)$ is defined for $|t|\le \delta^{1/m}$ for every $s$. Consider the continuous map
    \[
    \sigma\colon [0,1] \times [0,1] \to \F_1, \quad (s, \theta) \mapsto \phi^{-1}_{m\theta}(h(\tilde{\beta}(s, \delta^{1/m}\cdot e^{2\pi\imath\, \theta}))),
    \]
    which is well defined because $f(\beta(s,\delta^{1/m}e^{2\pi\imath\, \theta})) = \delta e^{2\pi\imath m\, \theta}$. This map has the property that $\sigma(0,\theta)$ and $\sigma(1,\theta)$ are independent of $\theta$. Indeed, using formula (\ref{compatibility}) we get
    \[
    \sigma(0,\theta) = \phi^{-1}_{m\theta}(h(\tilde{\beta}(0, \delta^{1/m}e^{2\pi\imath\, \theta}))) = \phi^{-1}_{m\theta}(h(\psi_0^{-1}(\delta^{1/N_i}e^{2\pi\imath m\, \theta/N_i},0))) = h(\psi_0^{-1}(\delta^{1/N_i},0)),
    \]
    and analogously for $\sigma(1,\theta)$. Therefore $\sigma$ is a homotopy relative to the endpoints between the paths $\sigma_0(s) \coloneqq \sigma(s,0)$ and $\sigma_1(s) \coloneqq \sigma(s,1)$. Moreover, the starting point lies on the piece of $\F_1$ over $E_i$ and the endpoint lies on the piece of $\F_1$ over $E_{i'}$. Note that by definition we have $\phi_m \circ \sigma_1 = \sigma_0$.

    As explained before, if $N$ is a common multiple of all $N_k$ with $k\ge 0$, then $\phi_{Nm}$ is a composition of Dehn twists. Since $\phi_{Nm} = \phi_m^N$, it follows  that $\phi_{Nm} \circ \sigma_1$ is homotopic to $\sigma_1$ relative to the endpoints. Recall that the Dehn twists that are isotopic to the identity are the ones occurring in the connected components of the pieces of $\F_1$ between two divisors  of a vertical group of $\Gamma_m$. If we denote by $\widetilde{\phi}_{Nm}$ the composition of those Dehn twists appearing in $\phi_{Nm}$ which are not isotopic to the identity, we have that $\widetilde{\phi}_{Nm}$ and $\phi_{Nm}$ are isotopic. Although it is possible that $\sigma_1(0)$ and $\sigma_1(1)$ are not fixed by this isotopy, we claim that the path $\widetilde{\phi}_{Nm} \circ \sigma_1$ is homotopic to $\sigma_1$ relative to the endpoints. Indeed, note that the isotopy is equal to the identity on the pieces of $\F_1$ over the divisors in the horizontal group of $\Gamma_m$, including the rupture components. Therefore, if  $\sigma_1(0)$ and $\sigma_1(1)$ are contained in pieces of $\F_1$ over divisors in the horizontal group, then it is immediate that $\widetilde{\phi}_{Nm} \circ \sigma_1$ is homotopic to $\sigma_1$ relative to the endpoints. If either $\sigma_1(0)$ or $\sigma_1(1)$ lies in a piece of $\F_1$ over a divisor in a vertical group of $\Gamma_m$ distinct from the rupture divisors, then the homotopy between $\widetilde{\phi}_{Nm} \circ \sigma_1$ and $\phi_{Nm} \circ \sigma_1$ induces a loop contained in a disk in $\F_1$ based at $\sigma_1(0)$ and a loop contained in a disk in $\F_1$ based at $\sigma_1(1)$. By Lemma \ref{homotopies} below, also in this case $\widetilde{\phi}_{Nm} \circ \sigma_1$ is homotopic to $\sigma_1$ relative to the endpoints.
    
    Moreover, the endpoints $\sigma_{1}(0)$ and $\sigma_{1}(1)$  lie in the pieces of $\F_{1}$ over $E_{i}$ and $E_{i'}$,  which are different connected components of the fixed point set of $\widetilde{\phi}_{Nm}$. Indeed, $\operatorname{Fix}{\widetilde{\phi}_{Nm}}$ consists of all pieces of $\F_{1}$ lying over divisors; all pieces lying between adjacent divisors in the same vertical group, and possibly some circles in the interior of pieces between other divisors, where $\widetilde{\phi}_{Nm}$ is an iterate of a Dehn twists. The claim follows since by assumption  $E_{i}$ and $E_{i'}$ do not lie in the same vertical group.
    
    Therefore the properties of $\sigma_1$ contradict Lemma \ref{seidel} below and the proof is concluded.
\end{proof}

In the above proof, we have used the following two elementary lemmas. 
The second one is a consequence of \cite[Lemma 3 (ii)]{Se}.

\begin{lemma} \label{homotopies}
    Let $T$ be a topological space and let $\eta_0$ and $\eta_1$ be paths in $T$ with the same endpoints. Suppose there exists a homotopy $H: [0,1] \times [0,1] \to T$ between $\eta_0$ and $\eta_1$, that is, $H(\cdot,0) = \eta_0$ and $H(\cdot,1) = \eta_1$, such that $H(0,\cdot)$ and $H(1,\cdot)$ are contractible loops. Then $\eta_0$ and $\eta_1$ are homotopic relative to the endpoints.
\end{lemma}

\begin{lemma} \label{seidel}
    Let $\Sigma$ be a surface and $T = T_1 \circ \dots \circ T_r$ be a composition of Dehn twists such that
    \begin{itemize}
        \item no $T_i$ is isotopic to the identity, and
        \item if the supports of $T_i$ and $T_j$ bound an annulus, then $T_i$ and $T_j$ have the same orientation.
    \end{itemize}
     If $\eta: [0,1] \to \Sigma$ is a path whose endpoints are fixed by $T$ and such that $T \circ \eta$ is homotopic to $\eta$ relative to the endpoints, then $\eta(0)$ and $\eta(1)$ are in the same connected component of the set of fixed points of $T$.
\end{lemma}

	\subs{\bf  Irreducibility.} We finish now the proof of Theorem \ref{thmNPCur} by showing that the sets $\sX_{m,i}$, $i \in S_m'$, and $Z_j$, $j \in \{1,\dots,g\}$, are all irreducible. The former are already known to be irreducible, so we focus on the sets $Z_j$.

	We assume first that $C$ has only one Puiseux pair, that is, up to higher order terms it has the form $y^p =x^q$ with $\gcd(p, q) = 1$. The only  rupture component is denoted $E_R$. It will be useful  to also consider  smooth curves which are tangent to the $y$-axis, that is,  $q>p = 1$. In this case by minimal resolution we mean the composition of the $q$ blow-ups needed to remove the tangency, and the rupture component is the last of these exceptional divisors. 
	
	\begin{lemma} \label{ishii-lemma} Assume $g=1$.
		Let $E_i,E_{i'}$ be two divisors  in the first  vertical group in Figure \ref{figr}, and suppose $N_i$ and  $N_{i'}$ divide $m$. If 
		${m} v_i(G)/{N_i} \leq {m} v_{i'}(G)/{N_{i'}}$
		for all $G \in \bC[x,y]$, then $\overline{\sX_{m,i}} \supset {\sX_{m,{i'}}}$.
	\end{lemma}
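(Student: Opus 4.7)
The plan is to exploit the toric structure of the first vertical group and construct an explicit one-parameter deformation. By Lemma~\ref{blow-up-formula}, every divisor in the first vertical group arises from iterated toric blowups of $\bC^2$ in the coordinates $(x,y)$, and each corresponds to a coprime pair $(r,s)\in\bN_{>0}^2$ whose valuation is the monomial valuation $v(x)=s$, $v(y)=r$. Writing $E_i\leftrightarrow(r_i,s_i)$ and $E_{i'}\leftrightarrow(r_{i'},s_{i'})$, specializing the hypothesis to $G=x$ and $G=y$ yields
$$
\frac{s_i}{N_i}\le\frac{s_{i'}}{N_{i'}},\qquad \frac{r_i}{N_i}\le\frac{r_{i'}}{N_{i'}}.
$$
Setting $K:=m/N_i$ and $K':=m/N_{i'}$, which are positive integers by the divisibility hypothesis, these inequalities read $s_iK\le s_{i'}K'$ and $r_iK\le r_{i'}K'$.

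Given $\gamma'=(\gamma'_1,\gamma'_2)\in\sX_{m,i'}$, I would exhibit $\gamma'$ as a limit of arcs in $\sX_{m,i}$ via the family
$$
\gamma_s(t):=\bigl(As\cdot t^{s_iK}+\gamma'_1(t),\ Bs\cdot t^{r_iK}+\gamma'_2(t)\bigr),\qquad s\in\bC,
$$
with $A,B\in\bC^*$ chosen generically. Then $\gamma_0=\gamma'$, while the forward-looking claim is that $\gamma_s\in\sX_{m,i}$ for every $s\ne 0$. A local computation in a monomial chart centered at a generic point of $E_{i'}$ shows $\ord_t\gamma'_1=s_{i'}K'$ and $\ord_t\gamma'_2=r_{i'}K'$; combined with the inequalities above, the perturbation terms $As\cdot t^{s_iK}$ and $Bs\cdot t^{r_iK}$ dominate and become the leading monomials of $\gamma_s$. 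Two checks remain: (i) $C\cdot\gamma_s=m$, and (ii) the lift of $\gamma_s$ meets $E_i^\circ$. For (i), the monomial valuation inequality gives $\ord_t f(\gamma_s)\ge K\cdot v_i(f)=m$ for a defining equation $f$ of $C$, with equality since the leading coefficient is a nontrivial polynomial $\sum_{as_i+br_i=N_i}c_{ab}A^aB^bs^{a+b}$ in $(A,B,s)$, hence nonvanishing for generic $(A,B)$ and $s\ne 0$. For (ii), the leading pair $(As,Bs)$ determines the point $\tilde y(0)$ at which $\gamma_s$ meets $E_i$; inverting the monomial chart shows $\tilde y(0)$ depends nontrivially on $(A,B,s)$, so generic choices avoid the finitely many special values corresponding to intersections of $E_i$ with other components of $(\mu^*C)_{red}$.

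The main technical obstacle I anticipate is bookkeeping in the boundary cases $s_iK=s_{i'}K'$ or $r_iK=r_{i'}K'$: there the leading coefficient of $\gamma_{s,1}$ (resp.\ $\gamma_{s,2}$) is $As$ plus the leading coefficient of $\gamma'_1$ (resp.\ $\gamma'_2$) rather than simply $As$ (resp.\ $Bs$); however this sum is still generic in $A$ (resp.\ $B$), so the verifications (i)--(ii) go through unchanged. With these local computations, $\gamma'=\gamma_0\in\overline{\sX_{m,i}}$, and since $\gamma'\in\sX_{m,i'}$ was arbitrary, the inclusion $\overline{\sX_{m,i}}\supset\sX_{m,i'}$ follows.
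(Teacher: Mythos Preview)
Your argument is correct and essentially unpacks, in this concrete toric situation, the content of Ishii's result \cite[Lemma~3.11]{Is} that the paper simply cites. The paper's proof is a one-line appeal to the fact that, since $g=1$, all the exceptional divisors in the first vertical group define monomial (toric) valuations, and then Ishii's lemma on maximal divisorial sets for toric valuations gives the inclusion directly.

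Your route is more elementary and self-contained: you reduce the valuation inequality to the two generators $x,y$ (legitimate, since monomial valuations are determined by their values there), read off the orders $\ord_t\gamma'_1=s_{i'}K'$, $\ord_t\gamma'_2=r_{i'}K'$ from the monomial chart of Lemma~\ref{blow-up-formula}, and then write down an explicit one-parameter degeneration whose leading exponents are forced to be $(s_iK,r_iK)$. The verification that $\gamma_s\in\sX_{m,i}$ for generic $s\neq 0$ is exactly the computation that the initial form $\mathrm{in}_{v_i}(f)$ does not vanish at a generic point of the torus, together with the toric fact that an arc with order vector proportional to the primitive ray $(s_i,r_i)$ lifts to $E_i$. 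This is precisely the mechanism behind Ishii's lemma, so the two proofs agree in substance; yours trades a citation for an explicit construction, which has the advantage of being transparent and the cost of requiring the genericity bookkeeping you flag (including the equality cases $s_iK=s_{i'}K'$ or $r_iK=r_{i'}K'$, which you handle correctly). One cosmetic point: you use $s$ for both the deformation parameter and the second coordinate of the label $(r,s)$; renaming the parameter would avoid ambiguity.
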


    \begin{proof}
        Since $g=1$, all exceptional divisors give rise to toric  valuations. The claim is then proven by Ishii \cite[Lemma 3.11]{Is}. Indeed, in the notation of \cite{Is}, $C_X(mv_i/N_i)$ is the closure of the fat arcs whose associated valuation equals $mv_i/N_i$. In terms of $\C$-points, those are the arcs that lift to $E_i$ and intersect it with multiplicity $m/N_i$, that is, $C_X(mv_i/N_i) = \overline{\sX_{m,i}}$.
    \end{proof}
	
	\begin{proposition}
		\label{1pp} Assume $g=1$.
		If $Z_1$ is nonempty, there exists precisely one component $E_{i_0}$, such that $Z_1 = \overline{\sX_{m,i_0}}$. If $R\in S_m$, then $i_0 = R$. Otherwise there are $m$-valuations only on one side of the rupture divisor, and $E_{i_0}$ is the one among them which lies closest to $E_R$.
	\end{proposition}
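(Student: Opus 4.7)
My plan is to exploit that $g=1$ makes everything toric, and then to deduce the desired closure containment by a direct toric computation fed into Lemma \ref{ishii-lemma}. I begin by writing a defining equation $f = y^p - x^q + (\text{higher-weight terms})$ of $C$ with $\gcd(p,q)=1$. By Lemma \ref{blow-up-formula}, every divisor of the first vertical group of the minimal $m$-separating log resolution is a toric monomial valuation $v_{(a,b)}$ with $\gcd(a,b)=1$ and $a,b\geq 1$, and $E_R = E_{(p,q)}$. A Newton polygon computation gives $N_{(a,b)}=\min(bp,aq)$, so $N_{(a,b)}= aq$ on the $y$-side of $R$ (where $bp\geq aq$), $N_{(a,b)}= bp$ on the $x$-side, and $N_R = pq$.

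The ``one side only'' assertion when $R\notin S_m$ is then immediate: on the $y$-side $q\mid N_{(a,b)}$ and on the $x$-side $p\mid N_{(a,b)}$, so if both sides contained a vertex $i\in S_m$ we would have $p,q\mid m$, hence $pq\mid m$ by coprimality, forcing $R\in S_m$. Next, for a monomial $G=x^\alpha y^\beta$ and any $(a,b)$ on the $y$-side,
\[
\frac{v_{(a,b)}(G)}{N_{(a,b)}}=\frac{\alpha}{q}+\frac{b}{a}\cdot\frac{\beta}{q},
\]
which is monotone nondecreasing in $b/a$; a symmetric formula holds on the $x$-side in terms of $a/b$, and I will verify directly that $v_R(G)/N_R \leq v_{(a,b)}(G)/N_{(a,b)}$ on every monomial for every $(a,b)$ in the first vertical group. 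Taking the candidate $i_0$ as in the statement (namely $R$ if $R\in S_m$, otherwise the $S_m$-vertex on the unique nonempty side closest to $R$), $i_0$ minimizes $v(G)/N$ among the relevant vertices monomial by monomial. Since this pointwise inequality on monomials is preserved upon passing to the minimum over the support of any $G\in\bC\llbracket x,y\rrbracket$, Lemma \ref{ishii-lemma} applies and yields $\overline{\sX_{m,i_0}}\supseteq \sX_{m,i}$ for every other $i\in S_m$ in the first vertical group.

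To conclude, $\sX_{m,i_0}\subseteq Z_1$ and $Z_1$ is closed by Corollary \ref{partition-corollary-v2}, so $\overline{\sX_{m,i_0}}\subseteq Z_1$; the reverse inclusion follows from the previous step together with the partition (\ref{eqD}), giving $Z_1 = \overline{\sX_{m,i_0}}$, which is then irreducible. For uniqueness, any competing $i_0'$ with $\overline{\sX_{m,i_0'}} = Z_1$ would force $\sX_{m,i_0'}$ to be dense in the irreducible variety $Z_1$ and hence to meet the equally dense $\sX_{m,i_0}$, contradicting the disjointness in (\ref{eqD}). I expect the main technical obstacle to be the toric bookkeeping across the three regimes ($y$-side, $x$-side, rupture): in particular, checking that the rupture case is simultaneously compatible with both sides when $R\in S_m$, and ruling out the two-sided case when $R\notin S_m$ by leveraging the coprimality of $p$ and $q$.
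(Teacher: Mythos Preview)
Your proposal is correct and follows essentially the same approach as the paper: both proofs exploit that the first vertical group consists of toric monomial valuations, compute $N_{(a,b)}=\min(bp,aq)$, observe that the normalized valuations $v_{(a,b)}(G)/N_{(a,b)}$ are monotone on each side of $E_R$ with common minimum at $E_R$, and feed this into Lemma~\ref{ishii-lemma}. The paper parametrizes by $t=r/(r+s)$ and checks only the two generators $x,y$ (which suffices since the valuations are monomial), whereas you parametrize by the slope $b/a$ and check all monomials---these are equivalent reformulations. Your closing paragraph, making explicit the use of Corollary~\ref{partition-corollary-v2} for $\overline{\sX_{m,i_0}}\subseteq Z_1$ and the disjointness argument for uniqueness, spells out steps the paper leaves implicit.
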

	
	\begin{proof}
		Using the notation $E_{(r,s)}$ that we introduced above, note that
		$
		v_{(r,s)}(G) = \min\{r \cdot \beta + s \cdot \alpha \mid x^\alpha y^\beta \text{ is a monomial appearing in } G\}.
		$
		Since all valuations we are interested in are of this form, we only need to check their values at the two functions $x$ and $y$, since
		$
		v_i(x) \leq v_{i'}(x) \text{ and } v_i(y) \leq v_{i'}(y)$ implies $v_i(G) \leq v_{i'}(G)$ for all $G \in \bC[x,y].
		$
		Applying this to our  curve $C$, we see that
		$
		N_{(r,s)} =  \min\{pr, qs\}.
		$
		By Lemma \ref{ishii-lemma}, to find a divisor that dominates all others we 
		need to simultaneously minimize the functions
		\begin{align*}
			F_x={m} v_{(r,s)}(x)/{N_{(r,s)}} &= {ms}/{\min\{pr, qs\}} = {m(1-t)}/{\min\{pt, q(1-t)\}}, \\
			F_y={m} v_{(r,s)}(y)/{N_{(r,s)}} &= {mr}/{\min\{pr, qs\}} = {mt}/{\min\{pt, q(1-t)\}},
		\end{align*}
		where $t = r/(r+s) \in [0,1] \cap \Q$ represents all possible pairs of coprime numbers. The graphs  are as in Figure \ref{fig:function-plots}. Both functions reach a minimum at $t = q/(p+q)$, which corresponds  to $E_R$. However, this might not be an $m$-valuation. 
	
		\begin{figure}[ht]
			\centering
			\subfigure{
				\begin{tikzpicture}[scale=0.3]
					\begin{axis}[
						xmin = 0, xmax = 1,
						ymin = 0, ymax = 20,
						axis lines=middle,
						axis line style={->},
						x label style={at={(axis description cs:0.5,-0.02)},anchor=north},
						y label style={at={(axis description cs:-0.1,0.4)},anchor=south},
						xlabel={\scalebox{2}{$t = {r}/({r+s})$}},
						ylabel={\scalebox{2}{$F_x$}}]
						\addplot[
						domain = 0.05:1,
						samples = 100,
						smooth,
						thick,
						blue,
						] {6*(1-x)/(min(2*x, 3*(1-x)))};
					\end{axis}
				\end{tikzpicture}
			}
			\quad\quad
			\subfigure{
				\begin{tikzpicture}[scale=0.3]
					\begin{axis}[
						xmin = 0, xmax = 1,
						ymin = 0, ymax = 20,
						axis lines=middle,
						axis line style={->},
						x label style={at={(axis description cs:0.5,-0.02)},anchor=north},
						y label style={at={(axis description cs:-0.1,0.4)},anchor=south},
						xlabel={\scalebox{2}{$t = {r}/({r+s})$}},
						ylabel={\scalebox{2}{$F_y$}}]
						\addplot[
						domain = 0.01:0.95,
						samples = 100,
						smooth,
						thick,
						blue,
						] {6*x/(min(2*x, 3*(1-x)))};
					\end{axis}
				\end{tikzpicture}
			}
			\caption{}\label{fig:function-plots}
		\end{figure}
	For $v_{(r,s)}$ to be an $m$-valuation,$N_{(r,s)}$ must divide $m$. If $p \nmid m$ and $q \nmid m$, then  $Z_1$ is empty. If $p \mid m$ and $q \mid m$, since $N_{(q,p)}  = pq$,  $E_R$ gives an $m$-valuation and   $Z_1 = \overline{\sX_{m,R}}$. If $p \mid m$, $q \nmid m$, and $N_{(r,s)} \mid m$, then $pr < qs$. Equivalently ${r}/({r+s}) < {q}/({p+q})$, that is, all $m$-valuations appear to the left of the common minimum in the graphs of Figure \ref{fig:function-plots}. In particular,  one of them  is smaller than all others, so its corresponding component dominates the rest, and  $Z_1$ is the closure of the corresponding $\sX_{m,i_0}$. Moreover,  the  order of the divisors in the resolution graph is the same as the order induced by their labels $(r,s)$ when we write them as ${r}/({r+s})$. Thus the valuation that dominates all others, the one closest to the common minimum in the graphs of Figure \ref{fig:function-plots}, is also the closest $m$-valuation to $E_R$. If $p \nmid m$ and $q \mid m$ the situation is analogous. 
	\end{proof}

	This proves Theorem \ref{thmNPCur} in the case of one Puiseux pair. We return now to the  general case where $C$ has $g$ Puiseux pairs. We denote by $\nu$ the multiplicity of $C$ and by $k_1, \ldots, k_g$ its characteristic exponents. Let $r_1 = \nu, r_{j+1} = \gcd(k_j, r_j)$ for $j=1, \ldots, g$; and  $k_0=0$, $\kappa_j = k_j - k_{j-1}$ for $j=1, \ldots, g$, $\kappa_{g+1} = 1$.

    	\begin{lemma} \label{vC-computation}
		Let $1 \leq j < g$. Consider the parametrization at the $j$-th rupture component $E_{R_j} = \{x_j = 0\}$ of the strict transform $C^{(R_j)}$: $x_j(\tau) = \tau^{r_{j+1}} u_x(\tau)$, $y_j(\tau) = \tau^{\kappa_{j+1}} u_y(\tau)$, where $u_x, u_y$ are units. Let $c_0 = \kappa_{j+1}, c_1 = r_{j+1}$, and consider the steps of the Euclidean algorithm for $(c_0,c_1)$:
$c_{i-1} = \eta_i c_i + c_{i+1}$ with $0 < c_{i+1} < c_i$ for $i=1,\ldots, l-1$, and $c_{\ell-1} = \eta_{\ell} c_{\ell}$.
				As in \cite[p.522]{BK}, label by $E_{a,b}$ with  $a = 1, \ldots, \ell$, $b = 1, \ldots, \eta_a$, the exceptional divisors in the minimal log resolution of $C$ that lie  between  $C_j^{(R_j)}$ and $E_{R_j}$ by their order of appearance. Let  $(r,s)$ be the pair of coprime numbers such that $E_{a,b} = E_{(r,s)}$ as in Lemma \ref{blow-up-formula}. 
Then,
		\[
		N_{a,b}= \begin{cases}
			s(\kappa_{j+1} + N_{R_j}) & \text{if $a$ is even,} \\ 
			s(\kappa_{j+1} + N_{R_j}) + (\eta_1 c_1 + \eta_3 c_3 + \cdots + \eta_{a-2} c_{a-2} + b c_{a} - \kappa_{j+1}) & \text{if $a$ is odd.}
		\end{cases}
		\]									
	\end{lemma}

	
	\begin{proof}
		We do the case where $\kappa_{j+1} > r_{j+1}$, the other case being analogous. This is a straightforward computation, given the formulas to compute the multiplicities:
		\begin{align*}
			v_{1,1}(C) &= v_{R_j}(C) + \mult(C^{(R_j)}) = c_1 + M, \\
			v_{1,b}(C) &= v_{1,b-1}(C) + \mult(C^{(1,b-1)}) = b c_1 + M, \text{ for } b>1, \\
			v_{2,1}(C) &= v_{1,\eta_1}(C) + \mult(C^{(1, \eta_1)}) = \eta_1 d_1 + d_2 + M = \kappa_{i+1} + M, \\
			v_{2,b}(C) &= v_{1,\eta_1}(C) + v_{2,b-1}(C) + \mult(C^{(2,b-1)}) = b(\kappa_{i+1} + M), \text{ for } b>1, \\ 
			v_{a,1}(C) &= v_{a-2, \eta_{a-2}}(C) + v_{a-1,\eta_{a-1}}(C) + \mult(C^{(a-1, \eta_{a-1})}), \text{ for } a>2, \\
			v_{a,b}(C) &= v_{a,b-1}(C) + v_{a-1,\eta_{a-1}}(C) + \mult(C^{(a,b-1)}), \text{ for } a>2, b>1.
		\end{align*}
		The factor $s$ in the formulas appears because $s = v_{(r,s)}(x_j) =  v_{a,b}(E_{R_j})$ and this last multiplicity is computed with the same formulas as above, except that we do not add the multiplicity of $C$ at each step. As a result, $s$ is the coefficient of $M$ in $v_{a,b}(C)$, giving the desired result.
	\end{proof}
	
	\begin{lemma} \label{same-inters}
		If $\gamma \in \sX_m(C)$ lifts to a component $E_{(r,s)}$ between $C_j^{(R_j)}$ and $E_{R_j}$ with $0 < j < g$, then
		$
		\gamma^{(R_j)} \cdot E_{R_{j}} \geq {m}/(\kappa_{j+1} + N_{R_j}),
		$
		with equality if and only if $E_{(r,s)}$ is in the $(j+1)$-th vertical group of $\Gamma_m$.
	\end{lemma}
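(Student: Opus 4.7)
The plan is to compute $\gamma^{(R_j)} \cdot E_{R_j}$ directly in local coordinates on $Y_{R_j}$ using the parametrization supplied by Proposition~\ref{fim-computation}, and then compare with the target value through Lemma~\ref{vC-computation}. Work in coordinates $(x_j, y_j)$ at the point $P = E_{R_j} \cap C_j^{(R_j)}$ with $E_{R_j} = \{x_j = 0\}$. Since $\gamma$ lifts through $E_{(r,s)}$ between $C_j^{(R_j)}$ and $E_{R_j}$, the intermediate lift $\gamma^{(R_j)}$ is centered at $P$, so $\gamma^{(R_j)} \cdot E_{R_j} = \ord_t x_j(t)$. Lemma~\ref{blow-up-formula} shows that the monomial transformation from $Y_{R_j}$ into the chart of $Y$ containing $E_{(r,s)} = \{\tilde x = 0\}$ is $(x_j, y_j) = (\tilde x^s \tilde y^a, \tilde x^r \tilde y^b)$, while Proposition~\ref{fim-computation} gives $\tilde x(t) = \alpha t^\rho + O(t^{\rho+1})$ and $\tilde y(t) = \beta + O(t)$ with $\alpha, \beta \in \bC^*$ and $\rho = m/N_{(r,s)}$. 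Substituting yields $x_j(t) = \alpha^s \beta^a t^{s\rho} + O(t^{s\rho+1})$, hence
$$\gamma^{(R_j)} \cdot E_{R_j} = s\rho = \frac{sm}{N_{(r,s)}}.$$

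The desired estimate $\gamma^{(R_j)} \cdot E_{R_j} \geq m/(\kappa_{j+1} + N_{R_j})$ is therefore equivalent to $N_{(r,s)} \leq s(\kappa_{j+1} + N_{R_j})$. Writing $E_{(r,s)} = E_{a,b}$, Lemma~\ref{vC-computation} immediately gives equality when $a$ is even. When $a$ is odd, iterating the Euclidean identity $c_{i-1} - \eta_i c_i = c_{i+1}$ telescopes to $\eta_1 c_1 + \eta_3 c_3 + \cdots + \eta_{a-2} c_{a-2} = \kappa_{j+1} - c_{a-1}$, so the formula of Lemma~\ref{vC-computation} simplifies to $N_{a,b} = s(\kappa_{j+1} + N_{R_j}) + (bc_a - c_{a-1})$. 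Combined with $1 \leq b \leq \eta_a$ and $\eta_a c_a \leq c_{a-1}$, with equality only at the terminating step $(a,b) = (\ell, \eta_\ell)$ corresponding to $E_{R_{j+1}}$, this yields $N_{a,b} \leq s(\kappa_{j+1} + N_{R_j})$, with equality exactly when either $a$ is even or $E_{(r,s)} = E_{R_{j+1}}$.

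To finish, I must match the equality cases above with the branch of the resolution in the sense of Proposition~\ref{propMLr}, i.e.\ with $r/s \geq \kappa_{j+1}/r_{j+1}$. Under the Stern--Brocot interpretation of Lemma~\ref{blow-up-formula}, the chain of divisors from $(0,1) = E_{R_j}$ to $(1,0) = C_j^{(R_j)}$ is built by successive mediants, where the odd-$a$ phases produce mediants with $r/s < \kappa_{j+1}/r_{j+1}$ (trunk side) and even-$a$ phases produce mediants with $r/s > \kappa_{j+1}/r_{j+1}$ (branch side); the terminal mediant $E_{R_{j+1}}$ sits exactly at the threshold $r/s = \kappa_{j+1}/r_{j+1}$ and is placed in its branch by convention. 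The main delicate point of the argument is this parity-versus-threshold bookkeeping, linking the even/odd analysis of $N_{a,b}$ in Lemma~\ref{vC-computation} to the geometric branch/trunk dichotomy; once granted, the displayed inequality above delivers the claim with the sharp characterization of equality.
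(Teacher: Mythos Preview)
Your argument follows the same route as the paper's: compute $\gamma^{(R_j)}\cdot E_{R_j}=sm/N_{(r,s)}$ (the paper does this in one line via $v_{(r,s)}(E_{R_j})=s$, you unfold the coordinates from Lemma~\ref{blow-up-formula}), reduce to $N_{(r,s)}\le s(\kappa_{j+1}+N_{R_j})$, and verify this through Lemma~\ref{vC-computation}. Your telescoping computation for odd $a$ is correct and is a nice way to make explicit what the paper leaves implicit.

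There is, however, a genuine omission. When you write ``$E_{(r,s)}=E_{a,b}$'' you are assuming the divisor already appears in the \emph{minimal} log resolution, whereas the lemma concerns an arc lifting to a component of the minimal \emph{$m$-separating} resolution. The extra divisors inserted to achieve $m$-separation are not covered by Lemma~\ref{vC-computation}, and your Stern--Brocot paragraph only matches parity to branch/trunk for the minimal-resolution components. The paper closes this gap by invoking the mediant argument from the proof of Proposition~\ref{propMLr}: if $E_{(r'',s'')}$ arises from blowing up $E_{(r,s)}\cap E_{(r',s')}$ then $s''=s+s'$ and $N_{(r'',s'')}=N_{(r,s)}+N_{(r',s')}$, so the ratio $N/s$ (and hence the inequality, together with its equality case) interpolates between the two neighbours. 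You should add this one-line induction; once you do, your proof is complete and essentially identical to the paper's, just with more of the Euclidean bookkeeping spelled out.
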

	
	\begin{proof}
		We have
		$
		\gamma^{(R_{j})} \cdot E_{R_{j}} = v_{(r,s)}(E_{R_{j}})(\gamma^{(r,s)} \cdot E_{(r,s)}) = s {m}/N_{(r,s)}.
		$
		So we need  that
		$
		N_{(r,s)} \leq s(\kappa_{j+1} + N_{R_j}),
		$
		with equality if and only if $E_{(r,s)}$ is in the vertical group. This holds by Lemma \ref{vC-computation} for  divisors  in the minimal resolution. In general, we argue by induction. Suppose $E_{(r'',s'')}$ has been obtained after blowing up $E_{(r,s)}\cap E_{(r',s')}$ and that $E_{(r,s)}$ and $E_{(r',s')}$ satisfy the claim. Then $(r'',s'') = (r+r', s+s')$ and its order of vanishing along $C$ is $N_{(r'',s'')} = N_{(r,s)} + N_{(r',s')}$. Hence
        \[
        N_{(r'',s'')} = N_{(r,s)} + N_{(r',s')} \leq s(\kappa_{j+1}+N_{R_j}) + s'(\kappa_{j+1}+N_{R_j}) = (s+s')(\kappa_{j+1}+N_{R_j}).
        \]
        Equality holds if and only if $N_{(r,s)} = s(\kappa_{j+1}+N_{R_j})$ and $N_{(r',s')} = s'(\kappa_{j+1}+N_{R_j})$; i.e. if both $E_{(r,s)}$ and $E_{(r',s')}$ are in the vertical group; i.e. if $E_{(r'',s'')}$ is in the vertical group.
	\end{proof}
	
	 We say that  $E_i\subset Y$  appears {\it no later than $E_{R_j}$} if it is a divisor between $C_k^{(R_k)}$ and $E_{R_k}$ for some $0\le k<j$; in Figure \ref{figr} these are all the vertices to the left of, under, or equal to $E_{R_j}$.
	
	\begin{lemma} \label{valuation-char-roots}
		Let  $1\le j\le g$. For every $E_i$ no later than $E_{R_j}$, $N_i=r_{j+1}v_i(C_j)$.  In particular, the minimal $(m/r_{j+1})$-separating  resolution graph of $C_j$ coincides with the part no later than $E_{R_j}$ of the minimal $m$-separating  resolution graph of $C$.
	\end{lemma}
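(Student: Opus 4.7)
The key geometric fact is that the $j$-th approximate root $C_j$ has $j$ Puiseux pairs with characteristic data obtained from that of $C$ by the global rescaling $k_l\mapsto k_l/r_{j+1}$ for $1\le l\le j$, and multiplicity $\nu/r_{j+1}=r_1/r_{j+1}$ at the origin. In particular $C_j$ is resolved precisely at $Y_{R_j}$, where $C_j^{(R_j)}$ is smooth and transverse to $E_{R_j}$ at the same point where $C^{(R_j)}$ meets $E_{R_j}$. Consequently, the sequence of point blowups yielding the part of the minimal log resolution of $C$ no later than $E_{R_j}$ is identical to the sequence of blowups yielding the minimal log resolution of $C_j$.

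The core inductive statement is: at every infinitely near point $Q$ of $C$ lying on or below the center of $E_{R_j}$ (equivalently, an infinitely near point of $C_j$), one has $m_Q(C)=r_{j+1}\cdot m_Q(C_j)$. The base case is immediate, since $m_0(C)=\nu=r_{j+1}\cdot(\nu/r_{j+1})=r_{j+1}\cdot m_0(C_j)$. The inductive step is a direct Puiseux computation: after blowing up $Q$, the multiplicity of the strict transform at the next infinitely near point is determined by the Puiseux exponents at the current stage; since the exponents of $C_j$ are those of $C$ rescaled by $1/r_{j+1}$, the factor $r_{j+1}$ propagates. One can read off the behaviour explicitly from the monomial transformations \eqref{next-rupture-component} and the corresponding local parametrizations $(x_i(\tau),y_i(\tau))$ of $C^{(R_i)}$ used in the proof of Proposition~\ref{fim-computation}, applied both to $C$ and to $C_j$.

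Now one invokes the classical formula expressing $v_i(D)$ for any branch $D$ as a nonnegative integral linear combination of the multiplicities $\{m_Q(D)\}$ at the infinitely near points $Q$ preceding or equal to the center of $E_i$, with combinatorial coefficients depending only on the proximity structure of the blowups and not on $D$. Applied to $D=C$ and $D=C_j$ for any $E_i$ no later than $E_{R_j}$ in the minimal log resolution, the multiplicity relation $m_Q(C)=r_{j+1}m_Q(C_j)$ yields $N_i=v_i(C)=r_{j+1}v_i(C_j)$. The identity then propagates through the extra blowups that promote the minimal log resolution to the minimal $m$-separating one, since blowing up $E_i\cap E_{i'}$ produces a divisor $E$ with $v_E(D)=v_i(D)+v_{i'}(D)$ for every branch $D$, and both sides of our identity are additive.

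For the ``in particular'' assertion: using $N_i=r_{j+1}v_i(C_j)$, the $m$-separating inequality $N_i+N_{i'}>m$ on $C$'s resolution is equivalent to $v_i(C_j)+v_{i'}(C_j)>m/r_{j+1}$ on $C_j$'s resolution, which is precisely the $(m/r_{j+1})$-separating condition. The canonical procedure of Proposition~\ref{propMSL} applied to $(C_j,m/r_{j+1})$ therefore produces the same sequence of blowups as the restriction of the canonical procedure applied to $(C,m)$ up to $E_{R_j}$; by minimality of both, the resolution graphs coincide. The principal technical point is the inductive multiplicity scaling $m_Q(C)=r_{j+1}m_Q(C_j)$, a classical but delicate Puiseux--series computation for approximate roots; the remaining steps are formal.
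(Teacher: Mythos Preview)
Your proof is correct and follows the same overall architecture as the paper's: reduce to the minimal log resolution by additivity under blowups, establish the multiplicity relation $\mult C^{(k-1)}=r_{j+1}\,\mult C_j^{(k-1)}$ for all relevant $k$, and then deduce $N_i=r_{j+1}\,v_i(C_j)$ via the Noether-type formula expressing the valuation as a sum of multiplicities at infinitely near points. Where you diverge is in how the multiplicity relation itself is proven. You argue by a direct Puiseux induction, using that the characteristic exponents of $C_j$ are those of $C$ rescaled by $1/r_{j+1}$, so the Euclidean-algorithm quotients governing the blowup sequence coincide and the remainders (i.e.\ the multiplicities) scale by $1/r_{j+1}$. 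The paper bypasses this induction with a pullback trick: since $C^{(R_j)}$ and $C_j^{(R_j)}$ meet only $E_{R_j}$ among the exceptional curves on $Y_{R_j}$, and $C_j^{(R_j)}\cdot E_{R_j}=1$, pulling $E_k$ back to $Y_{R_j}$ yields $C^{(k)}\cdot E_k=(C^{(R_j)}\cdot E_{R_j})\cdot(C_j^{(k)}\cdot E_k)$ in one stroke. This reduces everything to the single number $C^{(R_j)}\cdot E_{R_j}=r_{j+1}$, which the paper extracts from a semigroup relation in \cite{PP}. Your route is self-contained but leaves the ``classical but delicate'' Puiseux step as a sketch; the paper's route is slicker and uniform in $k$, at the price of importing an external reference.
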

	
	\begin{proof} 		The second claim  follows from the first one.
    It suffices to prove the first claim when $E_i$ is appears already in the minimal resolution.	Pick an arc $\tilde\gamma$ in $Y_{R_j}$ intersecting $E_i$ transversally but not intersecting any other irreducible component of the total transform of $C$. Let $\gamma$ be the image of $\tilde \gamma$ in $\bC^2$. Note that $C\cdot\gamma=(C^{(1)}+(\mult C)\cdot E_1)\cdot \gamma^{(1)}=C^{(1)}\cdot\gamma^{(1)}+(\mult C)\cdot (\mult \gamma)$. Inducting, we get
		$N_i = C \cdot \gamma = \sum_{k=1}^{i} (\mult C^{(k-1)})\cdot (\mult \gamma^{(k-1)}),$
		and similarly,
		$v_i(C_j) = C_j \cdot \gamma = \sum_{k=1}^{i} (\mult C_j^{(k-1)})\cdot (\mult \gamma^{(k-1)}).$
		Thus it is enough to show that
		$\mult C^{(k-1)} = r_{j+1} \cdot\mult C^{(k-1)}_j$
		for every $1\le k\le R_j$. Equivalently, that
		$C^{(k)} \cdot E_k = r_{j+1} C^{(k)}_j \cdot E_k.$
		Let $\mu_{R_j,k}=\mu_{k+1} \circ \dots \circ \mu_{R_j}$, where $\mu_i$ is the sequence of  blow-ups forming the minimal resolution. Then $C^{(k)} \cdot E_k=C^{(R_j)} \cdot \mu_{R_j,k}^*E_k =v_{R_j}(E_k)(C^{(R_j)} \cdot E_{R_j})$ and one has $v_{R_j}(E_k)=C^{(k)}_j \cdot E_k$. By  \cite[Theorem 5.1]{PP}  an integer $n$ is in the semigroup of $C$ if and only if $r_{j+1}n$ is in the semigroup of $C_j$. The  multiplicity of a branch is the least positive number of its semigroup, so $\mult C = r_{j+1}\cdot \mult C_j$. From above for $k=1$, we had $  \mult C = (C^{(R_j)} \cdot E_{R_j}) \cdot(\mult C_j)$,		hence $C^{(R_j)} \cdot E_{R_j}=r_{j+1}$.		\end{proof}
	
	\begin{proposition}
		If $Z_1$ is nonempty, there exists precisely one component $E_{i_0}$ no later than $E_{R_1}$ such that $Z_1 = \overline{\sX_{m,i_0}}$. If $N_{R_1}$ divides $m$, then $i_0 = R_1$. Otherwise there are $m$-valuations only on one side of the rupture divisor $E_{R_1}$, and $E_{i_0}$ is the one among them which lies closest to $E_{R_1}$.
	\end{proposition}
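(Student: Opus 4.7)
The plan is to reduce to the one-Puiseux-pair case (Proposition~\ref{1pp}) by replacing $C$ with its first approximate root $C_{1}$, which has a single Puiseux pair by \cite{PP}. Lemma~\ref{valuation-char-roots} applied with $j=1$ supplies the two inputs that drive the reduction: (a) $N_{i}=r_{2}\,v_{i}(C_{1})$ for every divisor $E_{i}$ no later than $E_{R_{1}}$, and (b) the minimal $(m/r_{2})$-separating log resolution graph of $C_{1}$ coincides with the part no later than $E_{R_{1}}$ of the minimal $m$-separating log resolution graph of $C$. If $Z_{1}$ is non-empty then some divisor $E_{i}$ no later than $E_{R_{1}}$ lies in $S_{m}$, which forces $r_{2}\mid m$ by (a); I set $m':=m/r_{2}$.

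Next I would identify $Z_{1}\subseteq\sX_{m}(C)$ with the analogous locus $Z_{1}(C_{1})\subseteq\sX_{m'}(C_{1})$ as subsets of $\LL(\bC^{2},0)$. For every arc $\gamma$ lifting to $E_{i}^{\circ}$ with $i$ no later than $E_{R_{1}}$, the identity $C\cdot\gamma=N_{i}(\gamma^{(i)}\cdot E_{i})=r_{2}v_{i}(C_{1})(\gamma^{(i)}\cdot E_{i})=r_{2}(C_{1}\cdot\gamma)$ shows $C\cdot\gamma=m$ if and only if $C_{1}\cdot\gamma=m'$. It remains to check that ``lifts to $E_{i}^{\circ}$'' is the same condition in both resolutions. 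For $E_{i}$ strictly before $E_{R_{1}}$ the chain neighbors of $E_{i}$ are the same in both resolutions by (b), so this is automatic. For $E_{R_{1}}$ itself, the only potentially troublesome excluded point on its interior is the descendant of $E_{R_{1}}\cap C^{(R_{1})}=E_{R_{1}}\cap C_{1}^{(R_{1})}$ in $Y_{R_{1}}$ after the further $m$-separating (resp.\ $m'$-separating) blow-ups, and this descendant coincides for both resolutions. Hence $\sX_{m,i}(C)=\sX_{m',i}(C_{1})$ for each $E_{i}$ no later than $E_{R_{1}}$, and taking unions yields $Z_{1}=Z_{1}(C_{1})$.

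Finally, Proposition~\ref{1pp} applied to $C_{1}$ and $m'$ produces a unique $E_{i_{0}}$ among the divisors no later than $E_{R_{1}}$ with $Z_{1}(C_{1})=\overline{\sX_{m',i_{0}}(C_{1})}$, which via the identification above becomes $Z_{1}=\overline{\sX_{m,i_{0}}(C)}$. The characterization of $i_{0}$ transports cleanly: $i_{0}=R_{1}$ precisely when $v_{R_{1}}(C_{1})\mid m'$, which by (a) and $m=r_{2}m'$ is equivalent to $N_{R_{1}}\mid m$; and in the opposite case, ``$m'$-valuations appear only on one side of $E_{R_{1}}$'' translates via (b) into the same assertion for $m$-valuations in the $C$-resolution, with $E_{i_{0}}$ the closest such valuation to $E_{R_{1}}$. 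The main obstacle is the set-theoretic identification $Z_{1}=Z_{1}(C_{1})$ in the second paragraph: the numerical part is immediate, but one has to argue carefully that the open condition ``lifts to $E_{i}^{\circ}$'' cuts out the same subset of $\LL(\bC^{2},0)$ under both resolutions, by tracking the handful of points on $E_{R_{1}}$ excluded by further separating blow-ups.
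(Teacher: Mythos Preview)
Your proposal is correct and follows essentially the same route as the paper: use Lemma~\ref{valuation-char-roots} with $j=1$ to identify $\sX_{m,i}(C)=\sX_{m/r_{2},i}(C_{1})$ for all $E_{i}$ no later than $E_{R_{1}}$, then invoke Proposition~\ref{1pp} for the one-Puiseux-pair curve $C_{1}$. You supply more detail than the paper on why the open condition ``lifts to $E_{i}^{\circ}$'' matches across the two resolutions (the paper simply asserts the equality of the $\sX_{m,i}$), but this is the same argument, just spelled out.
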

	
	\begin{proof} If $E_i$ in the first vertical group,  then $m/N_i=(m/r_2)/v_i(C_1)$
by Lemma \ref{valuation-char-roots}. Thus  $C_1$ is a curve with one Puiseux pair whose minimal $(m/r_2)$-separating log resolution graph coincides with the minimal $m$-separating log resolution graph of $C$ up to the first rupture component $E_{R_1}$, and
  $\sX_{m,i}(C)=\sX_{{m}/{r_2},i}(C_1)$. So the claim follows from Proposition \ref{1pp} for $C_1$.
	\end{proof}

	 \begin{lemma} \label{equimultiplicity}
	    Let $\sigma: B \to \bC^2$ be the blow-up  at the origin. Fix  $n\in\bN$ and let $M$ be the set of arcs in $\bC^2$ of multiplicity $n$ at the origin. Then $\sigma_\infty: \sigma_\infty^{-1}(M) \to M$ is an isomorphism.
	\end{lemma}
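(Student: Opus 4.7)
The plan is to exhibit an explicit inverse morphism to $\sigma_\infty\colon \sigma_\infty^{-1}(M)\to M$. By the valuative criterion of properness applied to the proper birational morphism $\sigma\colon B\to\bC^2$, every arc $\gamma\in\cL(\bC^2)$ whose generic point does not lie over the origin lifts uniquely to an arc $\tilde\gamma\in\cL(B)$ with $\sigma_\infty\tilde\gamma=\gamma$. Any arc of finite multiplicity at $0$ is of this kind, so $\sigma_\infty\colon\sigma_\infty^{-1}(M)\to M$ is already a set-theoretic bijection.

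To upgrade this to an isomorphism of schemes, I would work in affine charts. Cover $B$ by the two standard charts $E_1=\Spec\bC[x,u]$ with $\sigma(x,u)=(x,xu)$ and $E_2=\Spec\bC[v,y]$ with $\sigma(v,y)=(vy,y)$. Write a general arc as $\gamma=\bigl(\sum_i a_i t^i,\ \sum_i b_i t^i\bigr)$, so $M$ is the cylinder cut out by $a_i=b_i=0$ for $i<n$ together with $(a_n,b_n)\ne 0$. Cover $M$ by the two open subcylinders $U_1=M\cap\{a_n\ne 0\}$ and $U_2=M\cap\{b_n\ne 0\}$, corresponding to $\ord_t\gamma_1=n$ and $\ord_t\gamma_2=n$ respectively; these cover $M$ because multiplicity $n$ forces at least one of $\gamma_1,\gamma_2$ to have order exactly~$n$.

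On $U_1$, define an inverse $U_1\to\cL(E_1)$ by $\gamma\mapsto(\gamma_1,\gamma_2/\gamma_1)$. The quotient makes sense because $\gamma_1=t^n(a_n+a_{n+1}t+\cdots)$ has the unit factor $a_n$; writing $\gamma_2/\gamma_1=\sum_j u_j t^j$, the relation $\gamma_2=\gamma_1\cdot(\gamma_2/\gamma_1)$ yields the recursion $u_0=b_n/a_n$ and $u_j=(b_{n+j}-a_{n+1}u_{j-1}-\cdots-a_{n+j}u_0)/a_n$, so each $u_j$ is a polynomial in the $a_\bullet, b_\bullet$ localized at~$a_n$. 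Hence the assignment is a morphism, and by construction its composition with $\sigma_\infty$ is the inclusion $U_1\hookrightarrow M$. Make the symmetric definition on $U_2$ using the chart $E_2$.

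Finally, on the overlap $U_1\cap U_2$ the two candidate inverses produce lifts of the same arc $\gamma$ into $B$, so by the uniqueness part of the valuative criterion they agree. They therefore glue to a global inverse morphism $M\to\sigma_\infty^{-1}(M)$, showing that $\sigma_\infty$ restricts to an isomorphism of schemes. There is no real obstacle beyond this routine chart computation; the essential point is simply that the coordinate $a_n$ (respectively $b_n$) is invertible on $U_1$ (respectively $U_2$), which is precisely what allows the division required to lift the arc to be algebraic.
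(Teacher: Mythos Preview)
Your proof is correct and follows essentially the same approach as the paper's: both establish bijectivity first, then cover $M$ by the two open sets where $\ord_t\gamma_1=n$ respectively $\ord_t\gamma_2=n$, and write the inverse explicitly in the standard charts of $B$ as $(\gamma_1,\gamma_2/\gamma_1)$ and $(\gamma_1/\gamma_2,\gamma_2)$. Your version is a bit more detailed in spelling out the coefficient recursion and in invoking the valuative criterion for both bijectivity and gluing, but the argument is the same.
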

	\begin{proof} Let $E$ be the exceptional divisor.
	    Then
	    $\sigma_\infty: \LL(B)-\LL(E) \to \LL(\bC^2) - \{ 0 \}$
	    is a bijective morphism, so we only need to show $\sigma_\infty^{-1}: M \to \sigma_\infty^{-1}(M)$ is a morphism. We have $M = M_1 \cup M_2$, where	    $M_i = \{\gamma=(\gamma_1,\gamma_2) \in M \ | \ \ord_t \gamma_i = n \}$ is open in $M$ for $i=1,2.$ Using charts for $B$, the restriction of $\sigma_\infty^{-1}$ to $M_1$ and $M_2$ is given by $(\gamma_1,\gamma_2)\mapsto (\gamma_1,{\gamma_2}/{\gamma_1})$ and $({\gamma_1}/{\gamma_2},\gamma_2)$, respectively.
	    	    Computing these expressions explicitly, one sees they are morphisms, and hence, so is $\sigma_\infty^{-1}$.
	\end{proof}

	\begin{proposition}
		\label{last-branch}
		If $Z_g$ is nonempty, there exists precisely one divisor $E_{i_0}$ in the $g$-th branch such that $Z_g = \overline{\sX_{m,i_0}}$. This divisor is characterized as the one closest to the rupture component that gives an $m$-valuation (it could be the rupture component itself).
	\end{proposition}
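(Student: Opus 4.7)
The strategy is to reduce to the one-Puiseux-pair case (Proposition \ref{1pp}) by localizing around the point $P := C^{(R_{g-1})} \cap E_{R_{g-1}} \in Y_{R_{g-1}}$. By the resolution algorithm (and the computation in Proposition \ref{fim-computation}), $C^{(R_{g-1})}$ is locally at $P$ an irreducible plane curve germ with exactly one Puiseux pair, of characteristic $(\kappa_g, r_g)$ with $\gcd(\kappa_g,r_g)=r_{g+1}=1$, while $E_{R_{g-1}}$ is a smooth curve through $P$ playing the role of one coordinate axis in suitable local coordinates. By Proposition \ref{propMLr} applied with $j=g-1$, the divisors in the $g$-th vertical group of $\Gamma_m$ (i.e., those sharing the FIM of $E_{R_g}$) correspond, via the birational map $Y_{R_{g-1}} \dashrightarrow \bC^2$, to divisors of the minimal Puiseux resolution of this local one-Puiseux-pair germ; among them, $E_{R_g}$ is the local rupture divisor.

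I would next realize the lift $\gamma \mapsto \gamma^{(R_{g-1})}$ as an isomorphism from the union of $\sX_{m,i}$ over $E_i$ in the $g$-th vertical group onto a locally closed subset of $\LL(Y_{R_{g-1}},P)$. The $m$-separating property together with the divisibility $N_i \mid m$ forces the multiplicity sequence of such $\gamma$ through the first $R_{g-1}$ blowups to equal that of $C$ (so that $\gamma^{(R_{g-1})}$ is centered at $P$), and iterating Lemma \ref{equimultiplicity} then supplies the isomorphism. Under it, the contact order decomposes as
\[
    C \cdot \gamma \;=\; M \;+\; C^{(R_{g-1})} \cdot \gamma^{(R_{g-1})},
\]
where $M$ is a constant depending only on the multiplicity sequence of $C$ up to stage $R_{g-1}$. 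Hence $Z_g$ corresponds exactly to the ``first vertical group'' contact locus (in the sense of Proposition \ref{1pp}) for the local one-Puiseux-pair germ $C^{(R_{g-1})}$, with contact order $m-M$. Using Lemma \ref{vC-computation} one checks that the global $m$-separating property and the global divisibility $N_i \mid m$ translate respectively to the local $(m-M)$-separating property and the local divisibility, so that $m$-valuations of $C$ in the $g$-th branch correspond bijectively to $(m-M)$-valuations of the local germ.

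I would then apply Proposition \ref{1pp} to the local one-Puiseux-pair problem. It yields a unique dominating component of the local contact locus, given by the local $(m-M)$-valuation closest to the local rupture $E_{R_g}$ (equal to $E_{R_g}$ itself if it is an $(m-M)$-valuation locally), together with the fact that when $E_{R_g}$ is not such a valuation, the local $(m-M)$-valuations can appear on only one side of $E_{R_g}$. Pulling back via the isomorphism, this gives $Z_g = \overline{\sX_{m,i_0}}$ for the unique $m$-valuation $E_{i_0}$ in the $g$-th branch closest to $E_{R_g}$, exactly as claimed. The main obstacle is the bookkeeping in the middle step: one must verify that the global conditions (contact, separating, divisibility) transport cleanly to their local counterparts at $P$, which is where Lemma \ref{vC-computation} and the iteration of Lemma \ref{equimultiplicity} do the essential work; once this is in place, the toric nature of the valuations in local coordinates at $P$ makes Ishii's Lemma \ref{ishii-lemma} applicable and Proposition \ref{1pp} closes the argument.
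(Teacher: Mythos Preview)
Your overall strategy matches the paper's: localize at $P=C^{(R_{g-1})}\cap E_{R_{g-1}}$, iterate Lemma \ref{equimultiplicity} to identify $Z_g$ with a contact locus for the one-Puiseux-pair germ $C^{(R_{g-1})}$, and apply Proposition \ref{1pp}. One point needs sharpening. You assert that ``the $m$-separating property together with the divisibility $N_i\mid m$ forces the multiplicity sequence of such $\gamma$ through the first $R_{g-1}$ blowups to equal that of $C$''. This is not the right justification, and the conclusion as stated is not quite correct: the multiplicity sequence of $\gamma$ is only proportional to that of $C$, not equal to it, and a priori different arcs in $Z_g$ lift to different $E_{(r,s)}$ with different $\rho=m/N_{(r,s)}$, so constancy of the multiplicity sequence across $Z_g$ is not automatic from separation and divisibility alone. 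The paper handles this via Lemma \ref{same-inters}, which shows that $\gamma^{(R_{g-1})}\cdot E_{R_{g-1}}=m/(\kappa_g+N_{R_{g-1}})$ is the \emph{same} for every divisor in the $g$-th branch; this fixed intersection number then pins down the entire multiplicity sequence up to $Y_{R_{g-1}-1}$, which is exactly what is needed to iterate Lemma \ref{equimultiplicity}. Once you insert this step (and compute $m'=m-N_{R_{g-1}}\cdot m/(\kappa_g+N_{R_{g-1}})$ as the local contact order), the rest of your plan goes through as written.
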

	\begin{proof}
		By Lemma \ref{same-inters},	
		$Z_g=\left\{\gamma \in \X_m(C) \ \middle| \ \gamma^{(R_{g-1})} \cdot E_{R_{g-1}} = {m}/{(\kappa_g+N_{R_{g-1}})}\right\}.$ 
		So if $\gamma\in Z_g$, then
		$m=\gamma \cdot C=\gamma^{(R_{g-1})} \cdot C^{(R_{g-1})} + N_{R_{g-1}} (\gamma^{(R_{g-1})} \cdot E_{R_{g-1}})$.
		Hence
		$m':=\gamma^{(R_{g-1})} \cdot C^{(R_{g-1})}=m-N_{R_{g-1}}{m}/(\kappa_g+N_{R_{g-1}})$
		 is constant for all arcs in $Z_g$. 
		  Moreover, fixing $\gamma^{(R_{g-1})} \cdot E_{R_{g-1}}$ one fixes the sequence of multiplicities of the lifts of $\gamma$ up to $Y_{R_{g-1}-1}$. Applying Lemma \ref{equimultiplicity} repeatedly, we see that lifting arcs establishes a isomorphism between $Z_g$, the arcs of $\sX_m(C)$ that lift to the $g$-th branch, and the arcs of $\sX_{m'}(C^{(R_{g-1})})$  that lift to the corresponding part of the resolution graph of $C^{(R_{g-1})}$, that is, one of the sides of the first rupture component.	 
		 Since $C^{(R_{g-1})}$ has zero or one Puiseux pair, the result now follows from Proposition \ref{1pp}.
	\end{proof}
	
	\begin{prop}
		If $Z_j$ is nonempty, there exists precisely one divisor $E_{i_0}$ in the $j$-th branch such that  $Z_j = \overline{\sX_{m,i_0}}$. This divisor is characterized as the one closest to the rupture component that gives an $m$-valuation (it could be the rupture component itself).
	\end{prop}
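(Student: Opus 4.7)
The plan is to reduce to Proposition \ref{last-branch} by passing to the $j$-th approximate root $C_j$, which has exactly $j$ Puiseux pairs and whose minimal resolution coincides with that of $C$ up to $E_{R_j}$. In particular, the $j$-th vertical group in $\Gamma$ is common to both curves, and for $C_j$ it is the last vertical group.

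The dictionary between $(C,m)$ and $(C_j,m/r_{j+1})$ is provided by Lemma \ref{valuation-char-roots}, which gives $N_i = r_{j+1}\,v_i(C_j)$ for every $E_i$ appearing no later than $E_{R_j}$. Using this I first observe that if $Z_j\neq \emptyset$ then $Z_j$ contains arcs lifting to some $E_i^\circ$ with $N_i \mid m$, and hence $r_{j+1}\mid m$; so $m':=m/r_{j+1}$ is a positive integer. Next, the equivalence $N_i+N_{i'}>m\Leftrightarrow v_i(C_j)+v_{i'}(C_j)>m'$ for intersecting divisors implies that the minimal $m$-separating log resolution of $(\bC^2,C,0)$ and the minimal $m'$-separating log resolution of $(\bC^2,C_j,0)$ agree on the $j$-th vertical group. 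Finally, for each $E_i$ in this common vertical group, an arc $\gamma$ lifting to $E_i^\circ$ with lift $\tilde\gamma$ satisfies $\gamma\cdot C = N_i(\tilde\gamma\cdot E_i)$ and $\gamma\cdot C_j = v_i(C_j)(\tilde\gamma\cdot E_i)$, so that $\sX_{m,i}(\bC^2,C,0)=\sX_{m',i}(\bC^2,C_j,0)$.

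Taking the union over $i$ in the $j$-th vertical group yields $Z_j(C,m)=Z_j(C_j,m')$. Since this vertical group is the last branch of $C_j$, Proposition \ref{last-branch} applied to $(C_j,m')$ produces a unique divisor $E_{i_0}$ in the branch, characterized as the closest $m'$-valuation of $C_j$ to $E_{R_j}$, with $Z_j(C_j,m')=\overline{\sX_{m',i_0}(C_j)}$. Translating back via the dictionary above, $E_{i_0}$ is the closest $m$-valuation of $C$ to $E_{R_j}$ in the $j$-th branch, and $Z_j=\overline{\sX_{m,i_0}}$.

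The main point of care is verifying that the $m$-separating condition, the set of relevant $m$-valuations in $S_m$, and the contact loci themselves all transfer compatibly between $(C,m)$ and $(C_j,m')$. Once the identity $N_i = r_{j+1}\,v_i(C_j)$ is in place these are all immediate, and no new geometric input beyond Proposition \ref{last-branch} and the properties of the approximate root is required.
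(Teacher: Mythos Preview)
Your proof is correct and follows the same route as the paper: reduce to the approximate root $C_j$ via Lemma \ref{valuation-char-roots}, obtain $\sX_{m,i}(C)=\sX_{m/r_{j+1},i}(C_j)$ for every $E_i$ no later than $E_{R_j}$, and then invoke Proposition \ref{last-branch} for $C_j$, whose last branch is the $j$-th one. The paper's proof is essentially those two sentences; you have simply spelled out the verifications (that $r_{j+1}\mid m$, that the $m$-separating and $m'$-separating graphs match on the relevant part, and that the contact-locus pieces coincide as sets of arcs) that the paper leaves implicit.
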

	\begin{proof}
		By Lemma \ref{valuation-char-roots}, we have
		$\sX_{m,i}(C)=\sX_{{m}/{r_{j+1}},i}(C_j)$ for all $E_i$ no later than $E_{R_j}$.
		 In particular, since the $j$-th branch of the resolution graph of $C$ is the $j$-th (and therefore the last) branch of the resolution graph of $C_j$, the result follows from Proposition \ref{last-branch}.
	\end{proof}

\noi{\bf Proof of Theorem \ref{thmNPCur}.}  			(ii) This is true in general, by Proposition \ref{lemdlE}. Parts (i) and (iii) are proved in Theorem \ref{decomp} and the last three Propositions. 	
\hfill$\Box$

\end{document}